\theoremstyle{plain}
\newtheorem{theorem}{Theorem}
\newtheorem{lemma}{Lemma}
\newtheorem{corollary}{Corollary}
\newtheorem{proposition}{Proposition}
\theoremstyle{definition}
\newtheorem{definition}{Definition}
\theoremstyle{remark}
\newtheorem{remark}{Remark}
\newtheorem{example}{Example}
\newcommand{\Z}{\ensuremath{\mathbb{Z}}}   %\Z integers Z
\newcommand{\Q}{\ensuremath{\mathbb{Q}}}
\newcommand{\Hom}{\operatorname{Hom}}
\newcommand{\Ext}{\operatorname{Ext}} %---- Ext yazmak icin
\newcommand{\Image}{\operatorname{Im}} %---- Im yazmak icin (Image anlaminda)
\newcommand{\Ker}{\operatorname{Ker}}
\newcommand{\Rad}{\operatorname{Rad}}
\newcommand{\Soc}{\operatorname{Soc}}
\numberwithin{equation}{section} % to get equations numbered
\begin{document}
\title[Max-Projective Modules]{MAX-PROJECTIVE MODULES}

\author{Yusuf Alag\"oz}

\address{Izmir Institute of Technology \\ Department of Mathematics\\ 35430 \\ Urla, \.{I}zmir\\ Turkey}

\curraddr{Siirt University\\ Department of Mathematics\\ Siirt \\ Turkey}

\email{yusufalagoz@iyte.edu.tr}

\author{ENG\.{I}N B\"uy\"uka\c{s}{\i}k}

\address{Izmir Institute of Technology \\ Department of Mathematics\\ 35430 \\ Urla, \.{I}zmir\\ Turkey}

\email{enginbuyukasik@iyte.edu.tr}

\begin{abstract}
A right $R$-module $M$ is called  \emph{max-projective}  provided that each homomorphism $f:M \to R/I$ where $I$ is any maximal right ideal, factors through the canonical projection $\pi : R \to R/I$. We call a ring $R$ right almost-$QF$ (resp. right max-$QF$) if every injective right $R$-module is $R$-projective (resp. max-projective). This paper attempts to understand the class of right almost-$QF$ (resp. right max-$QF$) rings. Among other results, we prove that a right Hereditary right Noetherian ring $R$ is right almost-$QF$ if and only if $R$ is right max-$QF$ if and only if $R=S\times T$ , where $S$ is semisimple Artinian and $T$ is right small. A right Hereditary ring is max-$QF$ if and only if every injective simple right $R$-module is projective. Furthermore, a commutative Noetherian ring $R$ is almost-$QF$ if and only if $R$ is max-$QF$ if and only if $R=A \times B$, where $A$ is $QF$ and $B$ is a small ring.
\end{abstract}

\subjclass[2010]{16D50, 16D60, 18G25}

\keywords{Injective modules; $R$-projective modules; max-projective modules; $QF$ rings.}

\maketitle

\section{Introduction and Preliminaries}
Throughout, $R$ will denote an associative ring with identity, and modules will be unital right $R$-modules, unless otherwise stated.  Let $M$ and $N$ be $R$-modules. $M$ is called $N$-projective (projective relative to $N$) if every $R$-homomorphism from $M$ into an image of $N$ can be lifted to an $R$-homomorphism from $M$ into $N$. $M$ is called $R$-projective if it is projective relative to the right $R$-module $R_{R}$. The module $M$ is called projective if $M$ is $N$-projective, for every $R$-module $N$. A right $R$-module $M$ is called  \emph{max-projective}  provided that each homomorphism $f:M \to R/I$ where $I$ is any maximal right ideal, factors through the canonical projection $\pi : R \to R/I$. This notion properly generalizes the notions  $R$-projective and rad-projective modules studied in \cite{radprojective}.

Characterizing rings by projectivity of some classes of their modules is a classical problem in ring and module theory. A result of Bass \cite[Theorem 28.4]{Anderson-Fuller:RingsandCategoriesofModules} states that a  ring $R$ is right perfect if and only if each flat right $R$-module is projective. On the other hand, the ring $R$ is $QF$ if and only if each injective right $R$-module is projective, \cite{FaithQF}. Recently, the notion of $R$-projectivity and its generalizations are considered  in \cite{testingforprojectivity,tauprojective,radprojective,almostperfect,testing}. The rings whose flat right $R$-modules are $R$-projective and max-projective are characterized in \cite{almostperfect,almostperfectrings} and \cite{maxprojective}, respectively.

We call a ring $R$ right almost-$QF$ (resp. right max-$QF$) in case all injective right $R$-modules are $R$-projective (resp. max-projective). Right almost $QF$-rings are max-$QF$. The ring of integers is almost-$QF$, since $\Hom(E, \Z/n\Z)=0$ for each injective $\Z$-module $E$.

In this paper, we investigate some properties of max-projective $R$-modules, and give some characterizations of almost-$QF$ and max-$QF$ rings.

We organize the paper as follows. In Section 2, some properties of   max-projective $R$-modules are investigated. We obtain that $R$-projectivity and max-projectivity coincide over the ring of integers and over right perfect rings. Characterizations of semiperfect, perfect and $QF$ rings in terms of max-projectivity are given. As an application, we show that a ring $R$ is right (semi)perfect if and only if every (finitely generated) right R-module has a max-projective cover if and only if every (simple) semisimple right $R$-module has a max-projective cover. By \cite[Lemma 2.1]{testingforprojectivity} any finitely generated $R$-projective right $R$-module is projective. This result is not true when $R$-projectivity is replaced with max-projectivity. We prove that if $R$ is either a semiperfect or nonsingular self-injective ring, then finitely generated max-projective right $R$-modules are projective. We show that any max-projective right $R$-module of finite length is projective.

In Section 3, we give some characterizations of almost-$QF$ and max-$QF$ rings.
Every right small ring is right max-$QF$, while a right small ring is right almost-$QF$ provided $R$ is right Hereditary or right Noetherian. A right Hereditary right Noetherian ring $R$ is right almost-$QF$ if and only if $R$ is right max-$QF$ if and only if $R=S\times T$ , where $S$ is a semisimple Artinian and $T$ is a right small ring. A right Hereditary ring $R$ is right max-$QF$ if and only if every simple injective right $R$-module is projective. A commutative Noetherian ring $R$ is almost-$QF$ if and only if $R$ is max-$QF$ if and only if $R=A \times B$, where $A$ is $QF$ and $B$ is a small ring. A right Noetherian local ring is almost-$QF$ if and only if $R$ is $QF$ or right small.

As usual, we denote by Mod$-R$ the category of right $R$-modules. For a module $M$, $E(M)$, $Z(M)$, $\Rad(M)$ and $Soc(M)$ denote the the injective hull, singular submodule, Jacobson radical and socle of $M$, respectively. The notation $K \ll M$ means that $K$ is a superfluous submodule of $M$ in the sense that $K + L\neq M$ for any proper submodule $L$ of $M$.

\section{Max-projective modules}
\begin{definition}
A right $R$-module $M$ is said to be \emph{max-projective} if for every epimorphism $f:R\rightarrow R/I$ with $I$ is a maximal right ideal of $R$, and every homomorphism $g:M\rightarrow R/I$, there exists a homomorphism $h:M\rightarrow R$ such that $fh=g$.

\end{definition}
\begin{example} \label{example}
\item[(a)] Every projective $R$-module is max-projective.
\item[(b)] The $\Z$-module $\Q$ is max-projective, since $\Hom(\Q,\,\Z_p)=0$ for each simple $\Z$-module $\Z_p$.
\item[(c)] Every simple max-projective $R$-module is projective. For if $S$ is a simple right $R$-module and $1_S: S\to S$ is the identity map, then  by max-projectivity of $S$ there is a homomorphism $f: S \to R$ such that $\pi f=1_S$, where $\pi:R\rightarrow S$ is the natural epimorphism. Then $R\cong K \oplus S$, and so $S$ is projective.
\item[(d)] Any $R$-module $M$ with $\Rad(M)=M$ is max-projective, since $M$ has no simple factors.

\end{example}

Given modules $M$ and $N$, $M$ is said to be $N$-subprojective if for every homomorphism $f:M\rightarrow N$ and for every epimorphism $g:B\rightarrow N$, there exists a homomorphism $h:M\rightarrow B$ such that $gh = f$ (see \cite{subprojective}).

\begin{lemma} \label{Rprojmaxproj}
For an $R$-module $M$, the following are equivalent.
\begin{enumerate}
\item $M$ is max-projective.
\item $M$ is $S$-subprojective for each simple $R$-module $S$.
\item For every epimorphism $f:N\rightarrow S$ with $S$ simple, and homomorphism $g:M\rightarrow S$, there exists a homomorphism $h:M\rightarrow N$ such that $fh=g$.

\end{enumerate}

\end{lemma}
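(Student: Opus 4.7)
The plan is to prove the cycle $(1) \Rightarrow (3) \Rightarrow (2) \Rightarrow (1)$, since $(2) \Leftrightarrow (3)$ is just a matter of relabeling: statement (3) is exactly the definition of $S$-subprojectivity applied to $M$ with the roles of $f$ and $g$ interchanged (the hypothesis ``$g:M\to S$'' in (3) matches the ``$f:M\to N$'' in the definition of subprojective quoted before the lemma). So both $(3) \Rightarrow (2)$ and $(2) \Rightarrow (3)$ are immediate.

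For $(2) \Rightarrow (1)$, I would take a maximal right ideal $I$ and a homomorphism $g:M\to R/I$. Since $R/I$ is simple and $\pi:R\to R/I$ is an epimorphism, $(R/I)$-subprojectivity of $M$ applied to the pair $(g,\pi)$ produces $h:M\to R$ with $\pi h=g$, which is exactly max-projectivity.

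The main content is $(1) \Rightarrow (3)$: one must upgrade the factorization property from \emph{the} canonical projection $R\to R/I$ to \emph{any} epimorphism $f:N\to S$ with $S$ simple. Here is the key step. Because $S$ is simple, fix an isomorphism $S\cong R/I$ for a maximal right ideal $I$, so we may view the canonical projection as $\pi:R\to S$. Now I use two liftings:
\begin{enumerate}
\item[(i)] Projectivity of $R_R$ applied to the epimorphism $f:N\to S$ and the map $\pi:R\to S$ yields $\alpha:R\to N$ with $f\alpha=\pi$.
\item[(ii)] Max-projectivity of $M$ applied to $g:M\to S$ and $\pi:R\to S$ yields $\beta:M\to R$ with $\pi\beta=g$.
\end{enumerate}
Composing, $h:=\alpha\beta:M\to N$ satisfies $fh=f\alpha\beta=\pi\beta=g$, as required.

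There is no serious obstacle; the argument is entirely formal. The only mild subtlety is remembering that the definition of max-projectivity is stated for the specific projection $R\to R/I$, so one must first pull back along an isomorphism $S\cong R/I$ to bring $g$ into the form to which that definition applies—and this is exactly what makes the use of projectivity of $R$ in step (i) necessary to then transfer the lifting from $R$ back to $N$.
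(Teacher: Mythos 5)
Your proposal is correct and follows essentially the same route as the paper: the implication $(1)\Rightarrow(3)$ is handled exactly as in the paper's proof, by lifting $g$ through a projection $\pi:R\to S$ via max-projectivity and lifting $\pi$ through $f$ via projectivity of $R_R$, then composing. The remaining implications are, as you note, immediate from the definitions, so there is no substantive difference.
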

\begin{proof}
$(2) \Leftrightarrow (3)$ By definition.
$(3)\Rightarrow (1)$ is clear.\\
$(1)\Rightarrow (3)$ Let $f:N\rightarrow S$ be an epimorphism with $S$ is simple $R$-module and $g:M\rightarrow S$ a homomorphism. Since $S$ is simple, there exists an epimorphism $\pi:R\rightarrow S$. By the hypothesis there exists a homomorphism $h:M\rightarrow R$ such that $\pi h=g$. Since $R$ is projective, there exists a homomorphism $h':R\rightarrow N$ such that $fh'=\pi$. Then $f(h'h)=\pi h=g$, and so $M$ is max-projective.
\end{proof}

We need the following result in the sequel.

\begin{lemma} \label{directsumofmaxprojectives}
The following conditions are true.
\begin{enumerate}
\item A direct sum $\oplus_{i\in I} A_{i}$ of modules is max-projective (resp. $R$-projective) if and only if each $A_{i}$ is max-projective (resp. $R$-projective).
\item If $0\rightarrow A\rightarrow B\rightarrow C\rightarrow 0$ is an exact sequence and $M$ is $B$-projective, then $M$ is projective relative to both $A$ and $C$.
\end{enumerate}
\end{lemma}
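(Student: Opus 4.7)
The plan is to handle the two items separately; both are classical and follow from standard manipulations with the $\Hom$ functor and suitable pullback/pushout constructions.

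For item (1), the strategy is to exploit the universal property of the direct sum. Given a homomorphism $g : \oplus_{i \in I} A_i \to R/I$ with $I$ a maximal right ideal, I would set $g_i = g \circ \iota_i$, where $\iota_i : A_i \to \oplus A_i$ is the canonical injection. If each $A_i$ is max-projective, I lift each $g_i$ to $h_i : A_i \to R$ with $\pi h_i = g_i$, and then define $h : \oplus A_i \to R$ by $h = [h_i]_{i \in I}$ using the universal property of the coproduct; this $h$ satisfies $\pi h = g$. Conversely, fixing $j$ and given $g_j : A_j \to R/I$, I would extend to $g : \oplus A_i \to R/I$ by composing with the projection $\oplus A_i \to A_j$ (equivalently, zero on all other summands), lift to $h : \oplus A_i \to R$, and take $h \circ \iota_j$ to lift $g_j$. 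The same argument, with $R/I$ replaced by an arbitrary homomorphic image of $R_R$, gives the $R$-projective version.

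For item (2), I would first prove $M$ is $C$-projective. Given an epimorphism $q : C \to X$ and a map $f : M \to X$, the composite $q\,p : B \to X$ is an epimorphism (where $p : B \to C$ is the given surjection), so by $B$-projectivity of $M$ there is $h : M \to B$ with $q\,p\,h = f$; then $p h : M \to C$ is the desired lift.

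For $A$-projectivity, the less immediate half, I would use the following quotient trick. Given an epimorphism $q : A \to X$ with $K = \Ker q$, note $K \subseteq A \subseteq B$; form $B / K$ and observe the short exact sequence
\[
0 \To A/K \To B/K \To B/A \cong C \To 0,
\]
so that $X \cong A/K$ sits as a submodule of $B/K$. Given $f : M \to X$, view it as a map $\tilde f : M \to B/K$ into this submodule. Since $B \to B/K$ is an epimorphism and $M$ is $B$-projective, there is $\tilde h : M \to B$ with $\tilde h$ composed with $B \to B/K$ equal to $\tilde f$. Because $\tilde f(M) \subseteq A/K$, we get $\tilde h(M) + K \subseteq A$, and since $K \subseteq A$ this forces $\tilde h(M) \subseteq A$. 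Thus $\tilde h$ restricts to a map $h : M \to A$ with $q h = f$.

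The main obstacle is the second half of (2): the construction of the lift through $A$ requires quotienting $B$ by $\Ker q$ rather than being a direct diagram-chase, whereas everything else is routine.
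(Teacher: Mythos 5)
Your proof is correct, and it is essentially the argument the paper relies on: the paper omits the proofs, citing Anderson--Fuller (Propositions 16.10 and 16.12), and your coproduct argument for (1) and the quotient-by-$\Ker q$ trick for the $A$-projective half of (2) are exactly the standard proofs behind those citations.
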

\begin{proof}
$(1)$ Since it is similar to the one provided in \cite[Proposition 16.10]{Anderson-Fuller:RingsandCategoriesofModules} for $R$-projective modules, the proof is omitted for max-projective modules.

$(2)$ is clear by \cite[Proposition 16.12]{Anderson-Fuller:RingsandCategoriesofModules}.
\end{proof}

\begin{corollary}
For a ring $R$, the following are equivalent.
\begin{enumerate}
\item $R$ is semisimple.
\item Every right $R$-module is max-projective.
\item Every finitely generated right $R$-module is max-projective.
\item Every cyclic right $R$-module is max-projective.
\item Every simple right $R$-module is max-projective.
\end{enumerate}
\end{corollary}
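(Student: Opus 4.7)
The plan is to dispatch $(1) \Rightarrow (2) \Rightarrow (3) \Rightarrow (4) \Rightarrow (5)$ in one step, as each is immediate: a semisimple ring makes every right module projective (hence max-projective), and each later condition merely shrinks the class of modules in question. The real content lies in $(5) \Rightarrow (1)$.

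For $(5) \Rightarrow (1)$, my first move is to invoke Example 1(c), which says every simple max-projective module is already projective. Hence $(5)$ promotes to the stronger hypothesis that every simple right $R$-module is projective, and the task becomes: deduce from this that $R$ is semisimple.

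My strategy is to show $R_R = \Soc(R_R)$, from which $R$ will be a finite sum of simples (take any expression $1 = s_1 + \cdots + s_n$ with each $s_i$ in a minimal right ideal $S_i$; then $R = 1\cdot R \subseteq S_1 + \cdots + S_n$), and hence semisimple. Suppose for contradiction $\Soc(R_R) \subsetneq R$. The nonzero cyclic module $R/\Soc(R_R)$ admits by Zorn a maximal submodule $N/\Soc(R_R)$, where $N$ is a maximal right ideal with $\Soc(R_R) \subseteq N$. By hypothesis the simple module $R/N$ is projective, so the canonical projection $R \to R/N$ splits and yields a decomposition $R = N \oplus N'$ with $N' \cong R/N$ a minimal right ideal of $R$. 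But then $N' \subseteq \Soc(R_R) \subseteq N$, forcing $N' \subseteq N \cap N' = 0$ and contradicting $N' \neq 0$.

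I expect the only real obstacle to be spotting the socle trick in the final step. The hypothesis ``every simple right $R$-module is projective'' sounds deceptively weak, and without the observation that any simple complement $N'$ of a maximal right ideal $N \supseteq \Soc(R_R)$ must itself already lie \emph{inside} $\Soc(R_R)$, it is not obvious how to pass from ``simples split off'' to ``$R$ is semisimple''. Once that trick is identified, the rest of the argument is formal.
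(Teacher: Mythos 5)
Your proof is correct and follows essentially the same route as the paper: the forward implications are immediate, and $(5)\Rightarrow(1)$ reduces via Example~\ref{example}(c) to the statement that a ring all of whose simple right modules are projective is semisimple. The only difference is that the paper cites this last fact as standard, while you prove it explicitly with the socle argument; both are fine.
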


\begin{proof}
$(1)\Rightarrow (2) \Rightarrow (3) \Rightarrow (4) \Rightarrow (5)$ are clear.

$(5) \Rightarrow (1)$ Example \ref{example}$(c)$ and the hypothesis implies that each simple right $R$-module is projective. Thus $R$ is semisimple.
\end{proof}

In \cite{radprojective}, the module $M$ is called \emph{rad-projective} if, for any epimorphism $\sigma:R\rightarrow K$ where $K$ is an image of $R/J(R)$ and any homomorphism $f:M\rightarrow K$, there exists a homomorphism $g:M\rightarrow R$ such that $f=\sigma g$. We have the following implications:\\
projective $\Rightarrow$ R-projective $\Rightarrow$ rad-projective $\Rightarrow$ max-projective

\begin{proposition} \label{maxprojectivesemilocal}
Let $R$ be a semilocal ring and $M$ an $R$-module. Then the following are equivalent.
\begin{enumerate}
\item $M$ is rad-projective.
\item $M$ is max-projective.
\item Every homomorphism $f:M\rightarrow R/J(R)$ can be lifted to a homomorphism $g:M\rightarrow R$.
\end{enumerate}
\end{proposition}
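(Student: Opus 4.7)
The plan is to run the cycle (1)$\Rightarrow$(2)$\Rightarrow$(3)$\Rightarrow$(1). The first implication is free from the chain projective $\Rightarrow$ $R$-projective $\Rightarrow$ rad-projective $\Rightarrow$ max-projective recorded just above the proposition, so the real work is in proving (2)$\Rightarrow$(3) and (3)$\Rightarrow$(1). The structural hypothesis I will lean on throughout is that $R$ semilocal means $R/J(R)$ is semisimple Artinian; in particular $R/J(R)$ decomposes as a finite direct sum $S_1 \oplus \cdots \oplus S_n$ of simple right $R$-modules, and every image of $R/J(R)$ is a direct summand of $R/J(R)$.

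To prove (2)$\Rightarrow$(3), I would lift a given $f : M \to R/J(R)$ one simple summand at a time and then reassemble. Write $f = \sum_{j=1}^n \iota_j f_j$ where $\iota_j : S_j \hookrightarrow R/J(R)$ are the inclusions and $f_j : M \to S_j$ are the components. Fix an epimorphism $\alpha_j : R \to S_j$ for each $j$; by Lemma \ref{Rprojmaxproj}, max-projectivity of $M$ yields $g_j : M \to R$ with $\alpha_j g_j = f_j$. Next, using projectivity of $R$ and surjectivity of $\pi_0 : R \to R/J(R)$, I lift each composite $\iota_j \alpha_j : R \to R/J(R)$ through $\pi_0$, obtaining $\phi_j : R \to R$ with $\pi_0 \phi_j = \iota_j \alpha_j$. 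Setting $g := \sum_{j=1}^n \phi_j g_j$, a short computation gives $\pi_0 g = \sum_j \iota_j \alpha_j g_j = \sum_j \iota_j f_j = f$, which is exactly what (3) asks for.

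For (3)$\Rightarrow$(1), let $\sigma : R \to K$ be any epimorphism with $K$ an image of $R/J(R)$, say via $\pi : R/J(R) \to K$, and let $f : M \to K$. Because $R/J(R)$ is semisimple, $\pi$ admits a section $s : K \to R/J(R)$, and (3) applied to $sf$ produces $g_0 : M \to R$ with $\pi_0 g_0 = sf$; hence $\pi \pi_0 g_0 = \pi s f = f$. Finally, projectivity of $R$ allows me to factor the epimorphism $\pi \pi_0 : R \to K$ through $\sigma$, i.e., to pick $\phi : R \to R$ with $\sigma \phi = \pi \pi_0$, and then $g := \phi g_0$ satisfies $\sigma g = f$, giving (1).

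The main obstacle I anticipate is the assembly step in (2)$\Rightarrow$(3): max-projectivity only controls lifts into a single simple target, yet (3) demands a single map $g : M \to R$ whose image in $R/J(R)$ simultaneously recovers every component $f_j$. Naive summation of the $g_j$ is wrong because the canonical projection $\pi_0 : R \to R/J(R)$ does not split $R$ into coordinate slots corresponding to the $S_j$. The remedy is to insert the placement maps $\phi_j : R \to R$ before summing; these are produced exactly by the projectivity of $R$ over $\pi_0$ and route each simple summand into its correct slot. Once this device is available, everything else reduces to routine diagram chasing with the splittings supplied by semisimplicity of $R/J(R)$.
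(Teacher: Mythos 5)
Your proof is correct, and it differs from the paper's in two ways worth noting. For $(3)\Rightarrow(1)$ the paper simply cites an external result (Proposition 3.14 of the $\tau$-projective paper of Amin--Ibrahim--Yousif), whereas you give a short self-contained argument: take a section $s$ of an epimorphism $\pi:R/J(R)\to K$, lift $sf$ through $\pi_0$ using (3), and then reroute through the given epimorphism $\sigma$ by projectivity of $R$; this is perfectly valid and makes the proposition independent of that reference. For $(2)\Rightarrow(3)$ the paper also argues componentwise over the decomposition $R/J(R)=\oplus_{i=1}^{n}K_i$, but as printed it produces a single lift $g$ for one fixed index $i$ and then concludes via the identity $\varepsilon_i\pi_i=1_{R/J(R)}$; since a splitting of $\pi_i$ only gives $\pi_i\varepsilon_i=1_{K_i}$ (with $\varepsilon_i\pi_i$ the idempotent onto $K_i$), that step is only literally correct when $n=1$, i.e.\ when $R/J(R)$ is simple. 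Your assembly step --- lifting each component $f_j$ by Lemma \ref{Rprojmaxproj}, inserting the placement maps $\phi_j:R\to R$ with $\pi_0\phi_j=\iota_j\alpha_j$ obtained from projectivity of $R$, and summing --- is exactly the device needed to handle all the simple summands simultaneously, so your write-up is in fact more careful than the paper's at the one point you identified as the main obstacle. (An alternative route, which the paper could not use at this point in its exposition, is to note that $R/J(R)$ has finite length and invoke the later subprojectivity result, Proposition \ref{prop:finitelength}.)
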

\begin{proof}
$(1)\Rightarrow (2)$ Clear. $(3)\Rightarrow (1)$ By \cite[Proposition 3.14]{tauprojective}.

$(2)\Rightarrow (3)$ Since $R/J(R)$ is semisimple, $R/J(R)=\oplus^{n}_{i=1}K_{i}$, with each $K_{i}$ simple as an $R$-module. Let $\pi_{i}:\oplus^{n}_{i=1}K_{i}\rightarrow K_{i}$, and $\pi:R\rightarrow \oplus^{n}_{i=1}K_{i}$. Set $h:=\pi_{i}\pi$. By the hypothesis, there exists a homomorphism $g:M\rightarrow R$ such that $hg=\pi_{i}f$. Since $R/J(R)$ is semisimple, $\pi_{i}$ splits and there exists a homomorphism $\varepsilon_{i}:K_{i}\rightarrow \oplus^{n}_{i=1}K_{i}$ such that $\varepsilon_{i}\pi_{i}=1_{R/J(R)}$. Then, $\pi g=\varepsilon_{i} hg=\varepsilon_{i}\pi_{i} f=f$.
\end{proof}

In the next Proposition we provide a sufficient condition for an $R$-module to be max-projective. We establish a converse in the case of self-injective rings.
\begin{proposition} \label{p-testing}
If $M$ is a right $R$-module such that $Ext^{1}_{R}(M,I)=0$ for every maximal right ideal $I$ of $R$, then $M$ is max-projective. The converse is true when $R$ is a right self-injective ring.
\end{proposition}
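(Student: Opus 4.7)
The plan is to run the long exact sequence for $\Ext$ attached to the short exact sequence
\[
0 \To I \To R \To R/I \To 0
\]
for each maximal right ideal $I$, and read off both directions directly.

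For the forward implication, I apply the functor $\Hom_R(M,-)$ to the displayed sequence to obtain the exact piece
\[
\Hom_R(M,R) \xrightarrow{\pi_*} \Hom_R(M,R/I) \To \Ext^1_R(M,I).
\]
By hypothesis $\Ext^1_R(M,I)=0$, so $\pi_*$ is surjective. Unraveling the definition of $\pi_*$, this says that every $g:M\to R/I$ is of the form $\pi h$ for some $h:M\to R$, which is exactly the max-projectivity condition. This direction requires no additional hypothesis on $R$.

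For the converse, assume $R$ is right self-injective and $M$ is max-projective. I extend the long exact sequence one step further:
\[
\Hom_R(M,R) \xrightarrow{\pi_*} \Hom_R(M,R/I) \To \Ext^1_R(M,I) \To \Ext^1_R(M,R).
\]
Since $R_R$ is injective, $\Ext^1_R(M,R)=0$. Since $M$ is max-projective, the map $\pi_*$ is surjective by definition. Exactness at $\Ext^1_R(M,I)$ then forces $\Ext^1_R(M,I)=0$.

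There is no real obstacle here: the whole argument is a formal consequence of the $\Ext$ long exact sequence, and the only place where self-injectivity of $R$ enters is to kill $\Ext^1_R(M,R)$ in the converse. The one thing worth being careful about is the precise translation between "$\pi_*$ surjective'' and the factorization property in the definition of max-projectivity, but this is immediate once one writes down what $\pi_*$ does on an element $g\in\Hom_R(M,R/I)$.
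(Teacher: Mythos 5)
Your proposal is correct and follows exactly the paper's own argument: apply $\Hom_R(M,-)$ to $0\to I\to R\to R/I\to 0$, read off max-projectivity from the vanishing of $\Ext^1_R(M,I)$ in one direction, and in the other use self-injectivity to kill $\Ext^1_R(M,R)$ and surjectivity of $\pi_*$ to force $\Ext^1_R(M,I)=0$. There is nothing to add.
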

\begin{proof}
By applying $\Hom(M,-)$ to the short exact sequence $0\rightarrow I\rightarrow R\rightarrow R/I\rightarrow 0$, with $I$ being a maximal right ideal of $R$, we obtain the following exact sequence:\\
$0\rightarrow \Hom(M,I)\rightarrow \Hom(M,R)\rightarrow \Hom(M,R/I)\rightarrow Ext^{1}_{R}(M,I)\rightarrow Ext^{1}_{R}(M,R)\rightarrow ...$. If $Ext^{1}_{R}(M,I)=0$ for every maximal right ideal $I$ of $R$, it follows that $M$ is max-projective. Conversely, since $R$ is right self injective, $Ext^{1}_{R}(M,R)=0$. If $M$ is a max-projective right $R$-module, then the map $\Hom(M,R)\rightarrow \Hom(M,R/I)$ is onto, and so $Ext^{1}_{R}(M,I)=0$ for any maximal right ideal $I$ of $R$.
\end{proof}

\begin{proposition} \label{subprojective}
Let $0\rightarrow A\rightarrow B\rightarrow C\rightarrow 0$ be a short exact sequence. If $M$ is $A$-subprojective and $C$-subprojective, then $M$ is $B$-subprojective.
\end{proposition}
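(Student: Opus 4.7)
The plan is to use a standard two-step diagram chase: first lift modulo $A$ using $C$-subprojectivity, then correct the error using $A$-subprojectivity via a pullback construction. Denote the sequence $0 \to A \xrightarrow{\iota} B \xrightarrow{\pi} C \to 0$, and suppose $f : M \to B$ is given together with an epimorphism $g : X \to B$; the goal is to produce $h : M \to X$ with $gh = f$.

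First I would push $f$ forward to $C$ via $\pi$ and apply $C$-subprojectivity. The composition $\pi g : X \to C$ is an epimorphism, so from the map $\pi f : M \to C$ we obtain $h_{1} : M \to X$ with $\pi g h_{1} = \pi f$. Consequently $g h_{1} - f$ lands in $\ker\pi = \iota(A)$, so there is a well-defined $f' : M \to A$ with $\iota f' = g h_{1} - f$. This reduces the problem to finding some $\bar{h}_{2} : M \to X$ satisfying $g \bar{h}_{2} = \iota f'$, for then $h := h_{1} - \bar{h}_{2}$ answers the question.

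To construct $\bar{h}_{2}$, I would form the pullback $P = \{(x,a) \in X \times A : g(x) = \iota(a)\}$ with its two projections $p_{1} : P \to X$ and $p_{2} : P \to A$, so that $g p_{1} = \iota p_{2}$. Since $g$ is epi and $\iota$ is mono, $p_{2}$ is again epi (given $a \in A$, lift $\iota(a)$ through $g$ to some $x \in X$; then $(x,a) \in P$ projects to $a$). Now $A$-subprojectivity of $M$ applied to $f' : M \to A$ and $p_{2} : P \to A$ yields $h_{2} : M \to P$ with $p_{2} h_{2} = f'$. Setting $\bar{h}_{2} := p_{1} h_{2}$, we get $g \bar{h}_{2} = g p_{1} h_{2} = \iota p_{2} h_{2} = \iota f'$, as required, and then $g(h_{1} - \bar{h}_{2}) = g h_{1} - \iota f' = f$.

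The proof is essentially a routine diagram chase; there is no real obstacle. The only mildly nonobvious point is to realize that $A$-subprojectivity must be applied not to $g$ itself (which maps onto $B$, not $A$) but to the induced epimorphism $p_{2} : P \to A$ obtained by pulling back $g$ along $\iota$. Once this pullback is in hand, the two hypotheses combine cleanly.
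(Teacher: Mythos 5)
Your proof is correct: the two-step ``lift modulo $A$, then correct the error'' chase goes through, and the key observation that $A$-subprojectivity must be applied to the pulled-back epimorphism $p_{2}\colon P\to A$ rather than to $g$ itself is exactly right. It is close in spirit to the paper's argument but differs in execution. The paper fixes a projective presentation $\gamma\colon F\to B$, pulls it back along $A\to B$, applies $\Hom(M,-)$ to the resulting commutative diagram, and concludes that $\gamma^{*}\colon\Hom(M,F)\to\Hom(M,B)$ is epic by the Five Lemma; to get $B$-subprojectivity in the sense of the definition one then still needs the (standard, and in the paper implicit) remark that lifting through a single projective epimorphism onto $B$ suffices, since $\gamma$ itself factors through any epimorphism $X\to B$ by projectivity of $F$. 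Your version works with an arbitrary epimorphism $g\colon X\to B$ from the start, so this reduction step is unnecessary, and you replace the Five Lemma by an explicit additive correction $h=h_{1}-p_{1}h_{2}$; in effect you have unpacked the relevant surjectivity half of the Five Lemma in this instance. What the paper's route buys is brevity given the standard characterization of subprojectivity via projective presentations; what yours buys is self-containedness and a proof that matches the definition verbatim. One cosmetic point: injectivity of $\iota$ is not needed for $p_{2}$ to be epic (your own parenthetical argument uses only that $g$ is epic); where you genuinely use that $\iota$ is monic is in defining $f'$ with $\iota f'=gh_{1}-f$, which you do correctly.
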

\begin{proof}
Let $\gamma:F\rightarrow B$ be an epimorphism with $F$ projective. Then using the pullback diagram of $\gamma:F\rightarrow B$ and $\beta:A\rightarrow B$, and applying $\Hom(M, -)$, we get a commutative diagram with exact rows and columns:

\[\xymatrix@1{
&   & 0 \ar [d] &0 \ar [d]&
\\
 0 \ar [r] & Hom(M,K) \ar [r]  \ar [d] & Hom(M,X) \ar [r]^{\theta} \ar [d] & Hom(M,A) \ar [r] \ar [d]^{\beta^{*}} & 0
\\
 0 \ar [r] & Hom(M,K) \ar [r]   & Hom(M,F) \ar [r]^{\gamma^{*}} \ar [d] & Hom(M,B) \ar [r] \ar [d] & 0
\\
 &   & Hom(M,C) \ar [r]^{\phi} \ar [d] & Hom(M,C) \ar [d] &
 \\
 &   & 0  &0 & }\]
 Since $M$ is $A$-subprojective and $C$-subprojective, ${\theta}$ and ${\phi}$ are epic. Hence, ${\gamma^{*}}$ is epic by \cite[Five Lemma 3.15]{Anderson-Fuller:RingsandCategoriesofModules}.

\end{proof}
\begin{proposition}\label{prop:finitelength}
Let $M$ be an $R$-module. $M$ is max-projective if and only if $M$ is $N$-subprojective for any $R$-module $N$ with composition length $cl(N)< \infty$.
\end{proposition}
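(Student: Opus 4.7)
The plan is to prove this by induction on the composition length, relying on Lemma \ref{Rprojmaxproj} to handle the simple case and Proposition \ref{subprojective} to carry out the induction step.

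For the ``if'' direction, every simple module has composition length one, so the hypothesis specializes to $M$ being $S$-subprojective for each simple $R$-module $S$; by Lemma \ref{Rprojmaxproj} this is equivalent to $M$ being max-projective. So this direction is immediate.

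For the ``only if'' direction, assume $M$ is max-projective and let $N$ be a module with $cl(N) = n < \infty$. I would induct on $n$. When $n = 1$, $N$ is simple and we are done by Lemma \ref{Rprojmaxproj}. For the inductive step, assume the statement holds for all modules of composition length less than $n$, and let $cl(N) = n \geq 2$. Choose a maximal submodule $A \subsetneq N$; then $N/A$ is simple and $cl(A) = n-1$, giving a short exact sequence
\[
0 \to A \to N \to N/A \to 0.
\]
By the inductive hypothesis $M$ is $A$-subprojective, and by Lemma \ref{Rprojmaxproj} $M$ is $(N/A)$-subprojective. Proposition \ref{subprojective} then yields that $M$ is $N$-subprojective, completing the induction.

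There is no real obstacle here: the proof is a clean combination of Lemma \ref{Rprojmaxproj} (which converts max-projectivity into subprojectivity relative to simple modules) with the two-out-of-three behavior of subprojectivity provided by Proposition \ref{subprojective}. The only small point to be careful about is to pick the short exact sequence so that both end terms have strictly smaller composition length than $N$, which is why choosing a \emph{maximal} submodule $A$ (so that $N/A$ is simple and $A$ has length $n-1$) is the right move.
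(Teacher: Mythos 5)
Your proof is correct and follows essentially the same route as the paper: both arguments induct along a composition series of $N$, using Lemma \ref{Rprojmaxproj} to handle the simple subquotients and Proposition \ref{subprojective} to pass through each short exact sequence (the paper builds up from $S_1$ through $S_n=N$, while you induct via a maximal submodule, which is the same argument phrased top-down).
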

\begin{proof}
Let $M$ be a max-projective $R$-module and $N$ be an $R$-module with $cl(N)=n$. Then there exists a composition series $0=S_{0}\subset S_{1} ... \subset S_{n}=N$ with each composition factor $S_{i+1}/S_{i}$ simple. Consider the short exact sequence $0\rightarrow S_{1}\rightarrow S_{2}\rightarrow S_{2}/S_{1}\rightarrow 0$. Since $M$ is max-projective, by Lemma \ref{Rprojmaxproj}, $M$ is $S_{1}$-subprojective and $S_{2}/S_{1}$-subprojective. So, by Proposition \ref{subprojective}, $M$ is $S_{2}$-subprojective. Continuing in this way, $M$ is $S_{i}$-subprojective for each $0\leq i\leq n$. Hence, $M$ is $N$-subprojective. Conversely, since each simple right $R$-module has finite length, $M$ is max-projective by Lemma \ref{Rprojmaxproj}.
\end{proof}

\begin{corollary}
A $\Z$-module $M$ is max-projective if and only if $M$ is $\Z$-projective.
\end{corollary}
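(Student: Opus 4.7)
The plan is to show this as an immediate consequence of Proposition \ref{prop:finitelength} together with the observation that every proper quotient of $\Z$ has finite length.

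The forward direction is free: $\Z$-projective implies max-projective by the implication chain stated after Proposition \ref{maxprojectivesemilocal}. For the converse, assume $M$ is max-projective. By the definition of $\Z$-projectivity, I need to show that for every epimorphism $\pi : \Z \to X$ and every homomorphism $g : M \to X$, there is an $h : M \to \Z$ with $\pi h = g$. The only possibilities for $X$ (up to isomorphism) are $0$, $\Z$, and $\Z/n\Z$ for $n \geq 2$.

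The cases $X = 0$ and $X = \Z$ are trivial: in the latter, any epimorphism $\Z \to \Z$ is an isomorphism (the only units are $\pm 1$), so $h := \pi^{-1} g$ works. The essential case is $X = \Z/n\Z$. Here $X$ has finite composition length (equal to the number of prime factors of $n$ with multiplicity), so Proposition \ref{prop:finitelength} applies and gives that $M$ is $\Z/n\Z$-subprojective. Applied to the canonical surjection $\pi : \Z \to \Z/n\Z$ (whose domain is, in particular, a module onto which $\Z/n\Z$ admits a projective cover), this yields the required lifting $h : M \to \Z$ with $\pi h = g$.

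There is no real obstacle; the content is entirely packaged in Proposition \ref{prop:finitelength}, and the only thing to notice is the special structure of $\Z$: its proper quotients are precisely the finite cyclic groups, each of finite length, and the only self-epimorphisms of $\Z$ are automorphisms. So the corollary reduces to a short case analysis over the epimorphic images of $\Z$.
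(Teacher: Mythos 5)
Your proof is correct and follows essentially the same route as the paper: both reduce the converse to the observation that the epimorphic images of $\Z$ are (up to isomorphism) $\Z$ itself or finite cyclic groups of finite composition length, and then invoke Proposition \ref{prop:finitelength} to get the required lifting. Your explicit handling of the case $X\cong\Z$ (via the fact that surjective endomorphisms of $\Z$ are isomorphisms) just spells out what the paper leaves implicit.
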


\begin{proof} By the Fundamental Theorem of Abelian Groups, a cyclic  $\Z$-module $M$ is isomorphic either to $\Z$ or  to a finite direct sum of $\Z$-modules of finite length. Now the proof is clear by Proposition \ref{prop:finitelength}.
\end{proof}

\begin{corollary}\label{cor:finitelengthmaxproj.isproj.} Let $M$ be an $R$-module with finite composition length. If $M$ is max-projective, then it is projective.
\end{corollary}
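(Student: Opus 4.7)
The plan is to apply Proposition \ref{prop:finitelength} with the choice $N = M$ itself. Since $M$ has finite composition length by hypothesis, and $M$ is max-projective, the proposition tells us that $M$ is $M$-subprojective.

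Next I would exploit $M$-subprojectivity using the identity map. Let $\pi : P \to M$ be any epimorphism from a projective module $P$ (for instance, a free cover of $M$). Applying the definition of $M$-subprojectivity to the homomorphism $1_M : M \to M$ and the epimorphism $\pi$, I obtain a homomorphism $h : M \to P$ satisfying $\pi h = 1_M$. This exhibits $M$ as a direct summand of the projective module $P$, so $M$ is projective.

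There is no real obstacle here: the work has already been done in Proposition \ref{prop:finitelength}, and the corollary is essentially the specialization $N=M$ combined with the standard splitting argument via the identity map. The only thing to keep in mind is that one needs an epimorphism onto $M$ from a projective module, which always exists (every module is a quotient of a free module).
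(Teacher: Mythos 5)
Your argument is correct and is essentially the same as the paper's: both invoke Proposition \ref{prop:finitelength} with $N=M$ to get $M$-subprojectivity, then lift the identity map along an epimorphism from a projective (the paper uses $R^n$, which exists since $M$ has finite length) to split it and conclude $M$ is projective.
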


\begin{proof} Let $f: R^n \to M$ be an epimorphism. The module $M$ is $M$-subprojective by Proposition \ref{prop:finitelength}. That is, there is a homomorphism $g: M \to R^n$ such that $1_M=fg$. Thus the map $f$ splits, and so $M$ is projective.
\end{proof}

Submodules of max-projective $R$-modules need not be max-projective. Consider the ring $R=\Z/ p^2\Z$, for some prime integer $p$. $R$ is max-projective, whereas the simple ideal $pR$ is not max-projective, since  the epimorphism $R \to pR \to 0$ does not split.

Recall that a ring $R$ is called right \emph{$V$-ring} (resp. \emph{right $GV$-ring}) if all simple (resp. all singular simple) right $R$-modules are injective.
\begin{proposition}
Consider the following conditions for a ring $R$:
\begin{enumerate} \label{gvring}
\item $R$ is a right $GV$-ring.
\item Submodules of  max-projective right $R$-modules are max-projective.
\item Submodules of  projective right $R$-modules are max-projective.
\item Every right ideal of $R$ is max-projective.
\end{enumerate}

Then, $(1)\Rightarrow (2)\Rightarrow (3)\Rightarrow (4)$. Also, if $R$ is a right self injective ring, then $(4)\Rightarrow (1)$.
\end{proposition}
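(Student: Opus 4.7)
The four-part strategy is as follows: $(2)\Rightarrow(3)\Rightarrow(4)$ is immediate (every projective module is max-projective by Example \ref{example}(a), and every right ideal is a submodule of the projective module $R_R$), so the real work is in $(1)\Rightarrow(2)$ and the partial converse $(4)\Rightarrow(1)$ under self-injectivity.

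For $(1)\Rightarrow(2)$, let $N$ be a submodule of a max-projective module $M$, and fix a homomorphism $g:N\to R/I$ with $I$ a maximal right ideal. I would case-split on whether $I$ is essential. If $I$ is essential in $R_R$, then $R/I$ is a singular simple module, and the $GV$-hypothesis gives that $R/I$ is injective; hence $g$ extends to some $\tilde g:M\to R/I$, which in turn lifts to $h:M\to R$ by max-projectivity of $M$, and the restriction $h|_N$ is the desired lift. If $I$ is not essential, then a non-zero complement $K$ of $I$ combined with the maximality of $I$ forces $R=I\oplus K$ with $K\cong R/I$, so the projection $R\to R/I$ splits, and composing $g$ with any section produces a lift $N\to R$. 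In both cases $N$ is max-projective.

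For $(4)\Rightarrow(1)$ when $R$ is right self-injective, I would in fact prove the stronger statement that every simple right $R$-module is injective (which subsumes the $GV$-condition). Fix a simple $S=R/I$; by Baer's criterion it suffices to show $\Ext^{1}_{R}(R/J,S)=0$ for every right ideal $J$. The bridge is Proposition \ref{p-testing}: self-injectivity together with (4) yields $\Ext^{1}_{R}(J,I)=0$ for every right ideal $J$ and every maximal right ideal $I$. A dimension shift on $0\to J\to R\to R/J\to 0$, using $\Ext^{k}_{R}(R,I)=0$ for $k\ge 1$, promotes this into $\Ext^{2}_{R}(R/J,I)=0$. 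A second dimension shift on $0\to I\to R\to S\to 0$, this time using the self-injectivity vanishing $\Ext^{k}_{R}(-,R)=0$ for $k\ge 1$, yields $\Ext^{1}_{R}(R/J,S)\cong \Ext^{2}_{R}(R/J,I)=0$, as required.

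The main obstacle I anticipate is organizing the two Ext dimension shifts in $(4)\Rightarrow(1)$ without confusing which vanishing relies on $R$ being projective (first shift, on the covariant side) versus $R$ being injective (second shift, on the contravariant side); once Proposition \ref{p-testing} is invoked to translate the hypothesis into $\Ext^{1}$-vanishing, the remainder is routine homological bookkeeping.
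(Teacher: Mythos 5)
Your argument is correct, and it splits naturally into a part that coincides with the paper and a part that does not. For $(1)\Rightarrow(2)$ (and the trivial chain $(2)\Rightarrow(3)\Rightarrow(4)$) you do exactly what the paper does: the case split on whether the maximal right ideal $I$ is essential is just the paper's dichotomy ``the simple module $R/I$ is either projective or singular,'' with the singular case handled by the $GV$-hypothesis (extend to $M$, lift by max-projectivity, restrict) and the non-essential case by splitting $R=I\oplus K$. Where you genuinely diverge is $(4)\Rightarrow(1)$: the paper argues entirely with maps, taking $f:I\to R/J$, lifting it to $h:I\to R$ by max-projectivity of the ideal $I$, extending $h$ to $\lambda:R\to R$ by self-injectivity, and noting $\pi\lambda$ extends $f$, so $R/J$ is injective by Baer's criterion. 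You instead route everything through $\Ext$: Proposition~\ref{p-testing} converts (4) plus self-injectivity into $\Ext^1_R(J,I)=0$ for all right ideals $J$ and maximal $I$, and two dimension shifts (one using projectivity of $R$, one using injectivity of $R$) give $\Ext^1_R(R/J,R/I)=0$, i.e.\ every simple module is injective. Both proofs in fact establish the stronger conclusion that $R$ is a right $V$-ring; your bookkeeping with the two shifts is sound (the first shift needs only $\Ext^k_R(R,I)=0$ for $k\ge1$, the second only $\Ext^1_R(R/J,R)=0$). The paper's diagram chase is more elementary and self-contained, while your homological version makes the role of Proposition~\ref{p-testing} explicit and packages the self-injectivity hypothesis as a single $\Ext$-vanishing statement, which is arguably more transparent if one wants to see why the converse needs $R$ self-injective.
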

\begin{proof}
$(1)\Rightarrow (2)$ Let $N$ be a submodule of a max-projective right $R$-module $M$. Consider the following diagram:
$$\xymatrix{0 \ar[r]  &N \ar[d]^{f} \ar[r]^{i}&M\\
R\ar[r]^{\pi} & S \ar[r] & 0
} $$ where $S$ is a simple right $R$-module, $i:N\rightarrow M$ is the inclusion map and $\pi:R\rightarrow S$ is the canonical quotient map. Since the simple module $S$ is either projective or singular, the former implies $\pi:R\rightarrow S$ splits and there exists a homomorphism $\varepsilon:S\rightarrow R$ such that $\varepsilon \pi=1_{R}$. In the latter one, $S$ is singular, so it is injective by the hypothesis. Thus, there is a homomorphism $g:M\rightarrow S$ such that $gi=f$. Since $M$ is max-projective, there is a homomorphism $h:M\rightarrow R$ such that $\pi h=g$. Hence, $\pi(hi)=gi=f$. In either case, there exists a homomorphism from $N$ to $R$ that makes the diagram commute. This implies that $N$ is max-projective.

$(2)\Rightarrow (3)\Rightarrow (4)$ Clear.
$(4)\Rightarrow (1)$ Let $I$ be a right ideal of $R$ and $J$ a maximal right ideal of $R$. Consider the following diagram:
 $$\xymatrix{0 \ar[r]  &I \ar[d]^{f} \ar[r]^{i}&R\\
R\ar[r]^{\pi} & R/J \ar[r] & 0
} $$ where $R/J$ is a simple right $R$-module, $i:I\rightarrow R$ is the inclusion map and $\pi:R\rightarrow R/J$ is the canonical quotient map. Since $I$ is max-projective, there is a homomorphism $h:I\rightarrow R$ such that $\pi h=f$. Since $R$ is injective, there exists a homomorphism $\lambda:R\rightarrow R$ such that $\lambda i=h$. Now the map $\beta=\pi\lambda:R\rightarrow R/J$ satisfies $\beta i=\pi\lambda i=\pi h=f$, as required.
\end{proof}

\begin{proposition}Let $R$ be a commutative or semilocal ring. Then pure submodules of max-projective $R$-modules are max-projective.
\end{proposition}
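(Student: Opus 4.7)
The plan is to observe that in both cases max-projectivity of $N$ can be detected by lifting homomorphisms into a suitable target module that turns out to be pure-injective, and then to combine purity of $N \subseteq M$ (to extend a given map $N \to T$ to a map $M \to T$) with max-projectivity of $M$ (to lift the extension through $R$ or a given epimorphism). Restricting the resulting lift to $N$ then produces what is needed.

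For the semilocal case I would invoke Proposition \ref{maxprojectivesemilocal}: it suffices to lift every homomorphism $N \to R/J(R)$ through the canonical map $R \to R/J(R)$. Since $R$ is semilocal, the target $R/J(R)$ is semisimple Artinian and hence has finite length as a right $R$-module. I would then appeal to the classical fact that every module of finite length is pure-injective. Given $f : N \to R/J(R)$, purity of $N \subseteq M$ extends $f$ to some $\bar f : M \to R/J(R)$; max-projectivity of $M$ lifts $\bar f$ to some $\bar g : M \to R$ via Proposition \ref{maxprojectivesemilocal}; and $\bar g|_N$ is the desired lift of $f$.

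For the commutative case I would reduce, via Lemma \ref{Rprojmaxproj}, to showing that $N$ is $S$-subprojective for every simple right $R$-module $S = R/\mathfrak{m}$, which in turn reduces to verifying that each such $S$ is pure-injective. I plan to establish this by a direct tensor-product argument: if $S$ embeds purely in $X$, then tensoring the sequence $0 \to S \to X \to X/S \to 0$ with $S$ over $R$ yields an injection $S \cong S \otimes_R S \hookrightarrow X \otimes_R S \cong X/\mathfrak{m}X$ into the $R/\mathfrak{m}$-vector space $X/\mathfrak{m}X$. I can then split off $S$ inside this vector space and compose with the canonical map $X \to X/\mathfrak{m}X$ to obtain a retraction $X \to S$. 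Once pure-injectivity of each simple module is in hand, the extension-and-lifting argument proceeds exactly as in the semilocal case, now using Lemma \ref{Rprojmaxproj} in place of Proposition \ref{maxprojectivesemilocal}.

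The main obstacle is the pure-injectivity of the target. In the semilocal case it rests on the classical theorem about finite-length modules; in the commutative case it requires the tensor-product argument above, whose validity relies on commutativity so that $S \otimes_R S \cong S$ and $X \otimes_R S \cong X/\mathfrak{m}X$ are meaningful as $R/\mathfrak{m}$-vector space statements. Once pure-injectivity is secured, the remaining diagram chase is routine.
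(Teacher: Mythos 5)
Your proposal is correct and uses essentially the same mechanism as the paper's proof: extend the given map on $N$ along the pure embedding $N\subseteq M$ using pure-injectivity of the target, lift the extension through $R$ by max-projectivity of $M$, and restrict back to $N$. The only difference is that the paper argues uniformly with a simple target $S$ and simply asserts its pure-injectivity, while you split into the two cases and justify pure-injectivity explicitly (finite length of $R/J(R)$ in the semilocal case, a tensor/vector-space splitting argument for $R/\mathfrak{m}$ in the commutative case) — a harmless elaboration of the same argument.
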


\begin{proof} Let $M$ be a max-projective (right) module and $N$ a pure submodule of $M$. Let $S$ be a simple (right) module and $f: N \to S$ be a homomorphism. Since  $S$ is pure-injective and $N$ is a pure submodule of $M$, there is $g:M\to S$ such that $gi=f$, where $i: N \to M$ is the inclusion map. By max-projectivity of $M$, there is a homomorphism $h:M \to R$ such that $g=\pi h$, where $\pi : R \to S$ is the natural epimorphism. Now we have $f=gi=\pi hi$, i.e. $hi: N\to R$ lifts $f$. This proves that $N$ is max-projective.
\end{proof}

\begin{lemma}\label{factormaxprojective}
Let $R$ be a ring and $\tau$ be a preradical with $\tau(R)=0$. If $M$ is a max-projective $R$-module, then $M/\tau(M)$ is max-projective.
\end{lemma}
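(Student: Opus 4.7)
The plan is to transfer a lifting from $M$ down to $M/\tau(M)$ using the preradical hypothesis $\tau(R)=0$. Let $p:M\to M/\tau(M)$ be the canonical projection. Given a maximal right ideal $I$ of $R$ and a homomorphism $g:M/\tau(M)\to R/I$, I would first pull back along $p$ to form $gp:M\to R/I$.

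Since $M$ is max-projective, there exists $\tilde h:M\to R$ with $\pi \tilde h = gp$, where $\pi:R\to R/I$ is the canonical projection. The key step is now to show that $\tilde h$ vanishes on $\tau(M)$, so that it descends to a map $h:M/\tau(M)\to R$ satisfying $hp=\tilde h$. This is immediate from the fact that $\tau$ is a preradical: every homomorphism sends $\tau$ of the source into $\tau$ of the target, so $\tilde h(\tau(M))\subseteq \tau(R)=0$.

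With $h$ in hand, I would conclude by observing that $\pi h p = \pi \tilde h = gp$, and since $p$ is an epimorphism this yields $\pi h = g$. Hence $h:M/\tau(M)\to R$ is the required lift of $g$, proving that $M/\tau(M)$ is max-projective.

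There is no real obstacle here; the only thing to notice is that the hypothesis $\tau(R)=0$ is precisely what is needed to make $\tilde h$ factor through the quotient $M\to M/\tau(M)$. The rest of the argument is a formal diagram chase using the definition of max-projectivity applied to the composite $gp$.
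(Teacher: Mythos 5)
Your proof is correct and follows essentially the same route as the paper: compose $g$ with the projection $M\to M/\tau(M)$, lift via max-projectivity of $M$, use the preradical property together with $\tau(R)=0$ to see the lift kills $\tau(M)$ and hence factors through the quotient, and cancel the epimorphism to conclude. No differences worth noting beyond notation.
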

\begin{proof}
Let $M$ be a max-projective $R$-module and $f:M/\tau(M)\rightarrow S$ a homomorphism with $S$ simple $R$-module. Consider the following diagram:
\[
\xymatrix{
 M\ar[r]^\pi & M/\tau(M)  \ar[d]^{f} \\
R \ar[r]^\eta & S \ar[r] & 0
}
\]
Since M is max-projective, there exists a homomorphism $g:M\rightarrow R$ such that $f\pi=\eta g$. Since $g(\tau(M))\subseteq \tau(R)=0$, $\tau(M))\subseteq \Ker(g)$, and so there exists a homomorphism $h:M/\tau(M)\rightarrow R$ such that $h\pi=g$. Now, since $\eta h\pi=\eta g=f\pi$ and $\pi$ is an epimorphism, $\eta h=f$, and so $M/\tau(M)$ is max-projective.
\end{proof}

\begin{remark}
Recall that any finitely generated $R$-projective module is projective, \cite[Lemma 2.1]{testingforprojectivity}. This is not true for max-projective modules in general. Let $R$ be a right $V$-ring which is not right semihereditary. Then $R$ has a finitely generated right ideal  which is not projective. By Proposition \ref{gvring}, each right ideal of $R$ is max-projective.
\end{remark}

\begin{proposition} \label{fgmax}
Let $R$ be a right nonsingular right self-injective ring. Every finitely generated max-projective right $R$-module is projective.
\end{proposition}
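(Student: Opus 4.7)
My plan is to exploit the injectivity of $R^n$ (for each $n$), which we get from self-injectivity of $R_R$, in order to split off the obstruction to projectivity of $M$ as a singular summand, and then annihilate that summand using nonsingularity of $R$.

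Concretely, I would choose an epimorphism $p: R^n \to M$ (available since $M$ is finitely generated) and set $K := \Ker p$. Because $R_R$ is injective, so is $R^n$. By injectivity of $R^n$, the inclusion $K \hookrightarrow R^n$ extends to an embedding of the injective hull $E(K) \hookrightarrow R^n$; and since $E(K)$ is itself injective, it must be a direct summand of $R^n$, yielding $R^n = E(K) \oplus L$ for some $L$. Quotienting by $K \subseteq E(K)$ produces $M \cong (E(K)/K) \oplus L$. Here $L$, being a direct summand of $R^n$, is projective; and by Lemma \ref{directsumofmaxprojectives}(1) the complementary summand $N := E(K)/K$ of $M$ is max-projective. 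Moreover $N$ is finitely generated (as a direct summand of the finitely generated module $M$) and singular, since $K$ is essential in its injective hull $E(K)$.

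If $N \neq 0$, then, being finitely generated, it admits a surjection $\phi: N \to R/I$ onto a simple module $R/I$ for some maximal right ideal $I$. By max-projectivity this lifts through the canonical epimorphism $\pi : R \to R/I$ to a homomorphism $\psi: N \to R$. Since $N$ is singular and $R$ is nonsingular, $\psi = 0$, forcing $\phi = \pi\psi = 0$ and contradicting surjectivity of $\phi$. Hence $N = 0$ and $M \cong L$ is projective.

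The only non-routine step I anticipate is realizing the decomposition $R^n = E(K) \oplus L$ \emph{inside} $R^n$, which simultaneously needs $R^n$ to be injective (to accommodate a copy of $E(K)$) and $E(K)$ to be injective (to split off); once that decomposition is in hand, the reduction to a singular, finitely generated, max-projective summand and its elimination via nonsingularity of $R$ are essentially immediate.
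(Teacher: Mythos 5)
Your argument is correct, and it reaches the crucial decomposition by a somewhat different route than the paper. The paper works intrinsically with the singular submodule: since $Z(R)=0$, Lemma \ref{factormaxprojective} gives that $M/Z(M)$ is max-projective, and then $M/Z(M)$ is shown to be projective because the kernel of a map from a finitely generated free (hence injective) module onto the nonsingular module $M/Z(M)$ is closed and therefore a direct summand; this yields $M=Z(M)\oplus K$ with $K$ projective. You instead obtain the splitting extrinsically: embedding $E(K)$ into the injective module $R^n$ over $K=\Ker p$ and splitting it off gives $M\cong (E(K)/K)\oplus L$ with $L$ projective and $E(K)/K$ singular (quotient by an essential submodule); since $L$ is nonsingular, your singular summand is in fact $Z(M)$ up to the isomorphism, so the two decompositions coincide, but you bypass Lemma \ref{factormaxprojective} and the closed-submodule argument entirely, at the cost of invoking the standard facts that $E(K)$ embeds in any injective extension of $K$ and that quotients by essential submodules are singular. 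From that point on the two proofs are identical: the singular summand is a finitely generated direct summand of $M$, hence max-projective by Lemma \ref{directsumofmaxprojectives}, and if nonzero it would surject onto a simple module, the lift to $R$ would have singular image in the nonsingular module $R_R$, forcing the lift and hence the surjection to vanish, a contradiction. Your version is arguably more self-contained, while the paper's makes the role of $Z(M)$ and its Lemma \ref{factormaxprojective} explicit; both are sound.
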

\begin{proof}
Let $M$ be a finitely generated max-projective right $R$-module. As $R$ is a right nonsingular ring, by Lemma \ref{factormaxprojective}, $M/Z(M)$ is max-projective. Since $M/Z(M)$ is finitely generated, there exists an epimorphism $f:F\rightarrow M/Z(M)$ such that $F$ is finitely generated free. This means $\Ker(f)$ is closed in $F$. By the injectivity of $F$, $\Ker(f)$ is a direct summand of $F$, and so $M/Z(M)$ is projective. Then, $M=Z(M)\oplus K$ for some projective submodule $K$ of $M$. We claim that $Z(M)=0$. Assume to the contrary that $Z(M)\neq 0$. Since, $Z(M)$ is a finitely generated submodule of $M$, there exists a nonzero epimorphism $g:Z(M)\rightarrow S$ for some simple right $R$-module $S$. Then, by Lemma \ref{directsumofmaxprojectives}, $Z(M)$ is max-projective, and so there exists a nonzero homomorphism $h:Z(M)\rightarrow R$ such that $\pi h=g$, where $\pi:R\rightarrow S$ is the natural epimorphism. But then $h(Z(M))\subseteq Z(R_{R})=0$, a contradiction. Thus we must have $Z(M)=0$, whence $M$ is projective.
\end{proof}

A ring $R$ is called \emph{right max-ring} if every nonzero right $R$-module $M$ has a maximal submodule i.e. $\Rad (M) \neq M$.
\begin{proposition}\label{maxprojectiveradsmall}
The following conditions are true.
\begin{enumerate}
\item Over a semiperfect ring $R$, every max-projective right $R$-module with small radical is projective.
\item A ring $R$ is right perfect if and only if $R$ is semilocal and every max-projective right $R$-module is projective.
\end{enumerate}
\end{proposition}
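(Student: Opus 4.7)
For (1), my plan is to construct a projective module $P$ together with an epimorphism $\alpha : P \to M$ that splits, using the max-projectivity of $M$. Since $R$ is semiperfect it is semilocal, so $\Rad(M) = MJ(R)$; combined with $\Rad(M) \ll M$, this makes $M/MJ(R) = \bigoplus_j S_j$ semisimple, and the semiperfect hypothesis provides primitive idempotents $e_j$ with $e_jR / e_jJ(R) \cong S_j$. With $P := \bigoplus_j e_jR$, projectivity of $P$ lifts the natural isomorphism $P/PJ(R) \cong M/MJ(R)$ to $\alpha : P \to M$, and the small-radical condition forces $\alpha$ to be onto with $\ker\alpha \subseteq PJ(R)$. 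To split $\alpha$, for each $j$ I lift the composite $M \to M/MJ(R) \to S_j$ through the epimorphism $e_jR \to S_j$ (using max-projectivity of $M$ via Lemma~\ref{Rprojmaxproj}) to $\beta_j : M \to e_jR$; assembling these $\beta_j$ into $\beta : M \to P$ (carefully, using the small-radical property to keep the image inside the direct sum rather than the direct product) gives $\alpha\beta \equiv 1_M \pmod{\Rad(M)}$. The difference $1_M - \alpha\beta$ then has image in $\Rad(M) \ll M$, and a Nakayama-style argument makes $\alpha\beta$ an automorphism of $M$; thus $\beta(\alpha\beta)^{-1}$ is a section of $\alpha$, and $M$ is a direct summand of $P$, hence projective. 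The main technical hurdle is the assembly of the $\beta_j$ and the subsequent Nakayama inversion.

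For (2)($\Rightarrow$): assume $R$ is right perfect. Then $R$ is semiperfect (in particular semilocal) and $J(R)$ is left $T$-nilpotent, which forces $MJ(R) \ll M$ for every right $R$-module $M$: if $L + MJ(R) = M$ for a proper submodule $L$, then the nonzero quotient $M/L$ satisfies $(M/L)J(R) = M/L$, contradicting $T$-nilpotence. Hence every max-projective module has small radical, and part (1) applies to give projectivity.

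For (2)($\Leftarrow$): assume $R$ is semilocal and every max-projective module is projective. By Bass's theorem it suffices, given semilocality, to show $R$ is a right max-ring, i.e.\ every nonzero right $R$-module admits a maximal submodule. Suppose to the contrary that $\Rad(M) = M$ for some nonzero $M$. By Example~\ref{example}(d), $M$ is max-projective, hence projective by hypothesis. Writing $M \oplus N = F = R^{(I)}$ and representing the projection $F \to M$ by an idempotent matrix $p = (c_{ij})$, the inclusion $M = \Rad(M) \subseteq \Rad(F) = FJ(R)$ forces $c_{ij} \in J(R)$ for all $i,j$; iterating the relation $p^2 = p$ then gives $c_{ij} \in J(R)^{2^n}$ for every $n$, hence $c_{ij} \in \bigcap_n J(R)^n$. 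The main obstacle is deducing $c_{ij} = 0$, equivalently showing that the idempotent two-sided trace ideal $T(M) \subseteq J(R)$ (satisfying $M = M\cdot T(M)$) must vanish under the semilocal hypothesis; once this is in hand, $p = 0$ forces $M = 0$, giving the desired contradiction and showing $R$ is right perfect.
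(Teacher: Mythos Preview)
Your argument for (1) has a genuine gap at the assembly step. For each $m\in M$ the image of $m$ in $M/MJ(R)=\bigoplus_j S_j$ has finite support, so $\pi_j(m)=0$ for cofinitely many $j$; but this only forces $\beta_j(m)\in e_jJ(R)$, not $\beta_j(m)=0$. Hence $(\beta_j(m))_j$ lands in $\prod_j e_jR$, not in $P=\bigoplus_j e_jR$, and the map $\beta:M\to P$ need not exist. The ``small radical'' hypothesis on $M$ does not help here: it controls submodules of $M$, not the size of the images $\beta_j(m)$ in $P$. Even if $\beta$ did exist, your Nakayama step only yields that $\alpha\beta$ is surjective (from $\operatorname{Im}(1_M-\alpha\beta)\subseteq\Rad(M)\ll M$); without any finiteness or Hopfian property on $M$ you cannot conclude injectivity, so $\alpha\beta$ need not be an automorphism. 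The paper avoids all of this: over a semilocal ring max-projective coincides with rad-projective (Proposition~\ref{maxprojectivesemilocal}), and then one invokes \cite[Theorem~4.7]{tauprojective}, which already contains the delicate lifting/splitting argument for rad-projective modules with small radical over semiperfect rings.

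For (2)($\Leftarrow$) you reach exactly the right configuration---a nonzero projective module $M$ with $\Rad(M)=M$---but then try to reprove from scratch that this is impossible, and you explicitly leave the trace-ideal step unresolved. The paper simply cites the classical fact that every nonzero projective module has a maximal submodule (equivalently $\Rad(P)\neq P$ for $P$ projective and nonzero); this is standard (see e.g.\ \cite[Proposition~17.14]{Anderson-Fuller:RingsandCategoriesofModules}) and finishes the argument in one line. Your matrix/trace-ideal approach can be completed, but semilocality alone does not give $\bigcap_n J(R)^n=0$, so your iteration $c_{ij}\in J(R)^{2^n}$ does not by itself force $c_{ij}=0$; you would still need the general fact about projectives. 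Your (2)($\Rightarrow$) is fine and matches the paper.
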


\begin{proof}
$(1)$ Let $M$ be a max-projective right $R$-module with $\Rad(M)\ll M$. Since $R$ is semilocal, $M$ is rad-projective by Proposition \ref{maxprojectivesemilocal}. Hence $M$ is projective by \cite[Theorem 4.7]{tauprojective}.

$(2)$ Since over a right perfect ring $R$ every right $R$-module has  small radical, it follows from $(1)$ that every max-projective right $R$-module is projective. Conversely, assume that $R$ is semilocal and every max-projective right $R$-module is projective. Let $M$ be a nonzero right $R$-module. We claim that $\Rad(M)\neq M$. Assume to the contrary that $M$ has no maximal submodule, i.e. $\Rad(M)= M$. Since $\Hom(M,S)=0$ for any simple right $R$-module, $M$ is max-projective. Thus $M$ is projective, by the hypothesis. Since projective modules have maximal submodules, this is a contradiction. Hence, every right $R$-module has a maximal submodule. Since $R$ is semilocal, $R$ is right perfect by \cite[Theorem 28.4]{Anderson-Fuller:RingsandCategoriesofModules}.
\end{proof}
Recall that if $R$ is a right perfect ring, every $R$-projective right $R$-module is projective, \cite{testing}. Thus the following result follows from Proposition \ref{maxprojectiveradsmall}(2).

\begin{corollary} Let $R$ be a right perfect ring and $M$ be a right $R$-module. Then the following are equivalent.
 \begin{enumerate}
 \item $M$ is projective.
\item $M$ is $R$-projective.
\item $M$ is max-projective.
\end{enumerate}
\end{corollary}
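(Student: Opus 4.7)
The plan is to verify the equivalence by running the cycle $(1)\Rightarrow (2)\Rightarrow (3)\Rightarrow (1)$, with only the last implication requiring genuine work. The first two arrows are essentially free: $(1)\Rightarrow (2)$ is immediate because a projective module is $N$-projective for every $N$, and $(2)\Rightarrow (3)$ is the implication chain ``projective $\Rightarrow$ $R$-projective $\Rightarrow$ rad-projective $\Rightarrow$ max-projective'' recorded just before Proposition \ref{maxprojectivesemilocal}.

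For the substantive direction $(3)\Rightarrow (1)$, the strategy is to reduce to Proposition \ref{maxprojectiveradsmall}. Since $R$ is right perfect, $R/J(R)$ is semisimple and $R$ is in particular semilocal, so the forward implication of Proposition \ref{maxprojectiveradsmall}(2) applies and immediately yields that every max-projective right $R$-module is projective. As an alternative route, one may appeal to part $(1)$ of the same proposition: right perfect implies semiperfect, and by Bass's theorem \cite[Theorem 28.4]{Anderson-Fuller:RingsandCategoriesofModules} every right $R$-module over a right perfect ring has small radical; hence any max-projective $M$ automatically satisfies $\Rad(M)\ll M$, and Proposition \ref{maxprojectiveradsmall}(1) closes the argument.

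The main obstacle is essentially absent, because the corollary is packaged to be a direct application of the preceding proposition under the standing hypothesis of right perfectness. The only thing to notice is that over a right perfect ring the two auxiliary hypotheses appearing in Proposition \ref{maxprojectiveradsmall}, namely semilocality and smallness of the radical, are both automatic, so no additional input is required.
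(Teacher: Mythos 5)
Your proposal is correct and follows essentially the same route as the paper: the nontrivial implication $(3)\Rightarrow(1)$ is exactly the forward direction of Proposition \ref{maxprojectiveradsmall}(2) (whose own proof is your ``alternative route'' via part (1) and the smallness of radicals over right perfect rings), and the remaining implications are the standard chain projective $\Rightarrow$ $R$-projective $\Rightarrow$ max-projective.
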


The following Remark is an example of a right nonperfect ring $R$ such that every max-projective module is $R$-projective.
\begin{remark} \label{max-projectivesR-projective}
Let $K$ be a field, and $R$ the subalgebra of $K^{\omega}$ consisting of all eventually constant sequences in $K^{\omega}$. For each $i<\omega$, we let $e_{i}$ be the idempotent in $K^{\omega}$ whose ith component is $1$ and all the other components are $0$. Notice that $\{e_{i} : i<\omega \}$ a set of pairwise orthogonal idempotents in $R$, so $R$ is not perfect, \cite[Lemma 2.3]{trlifaj}. By \cite[Lemma 2.3 and Lemma 2.4]{trlifaj}, $R/Soc(R)$ is simple $R$-module and a module $M$ is $R$-projective if and only if it is projective with respect to the projection $\pi:R\rightarrow R/Soc(R)$. Thus, an $R$-module $M$ is max-projective if and only if $M$ is $R$-projective.
\end{remark}
The following Corollary follows from \cite[Theorem 3.3]{trlifaj} and Remark \ref{max-projectivesR-projective}.

\begin{corollary}Let $K$ be a field of cardinality $\leq 2^{\omega}$ and $R$ the subalgebra of $K^{\omega}$ consisting of all eventually constant sequences in $K^{\omega}$. Assume G\"odel's Axiom of Constructibility $(V = L)$. Then all max-projective $R$-modules are projective.
\end{corollary}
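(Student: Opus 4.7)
The plan is to simply combine the two ingredients cited just before the statement, so the proof is essentially a two-line chain of implications with no genuine new work needed.

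First I would recall what Remark \ref{max-projectivesR-projective} establishes for this particular ring $R$. The argument there uses the description of $R$ from \cite{trlifaj}: the socle $\Soc(R)$ is generated by the pairwise orthogonal idempotents $e_i$, and $R/\Soc(R)$ is a simple $R$-module. From the cited lemmas of Trlifaj, an $R$-module $M$ is $R$-projective if and only if $M$ is projective relative to the single canonical projection $\pi : R \to R/\Soc(R)$. Since $R/\Soc(R)$ is simple, being $R$-projective in this case is precisely the defining lifting property for max-projectivity against the maximal right ideal $\Soc(R)$; and any other max-projectivity test against a maximal right ideal $I$ with $I \neq \Soc(R)$ is automatic because the corresponding simple factor $R/I$ is projective (it is a summand of $\Soc(R)$), so every homomorphism into it factors through $R$. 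Thus max-projectivity and $R$-projectivity coincide for modules over $R$.

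Next I would invoke \cite[Theorem 3.3]{trlifaj}, which under the hypotheses of the corollary---namely that $K$ has cardinality at most $2^{\omega}$ and that G\"odel's Axiom of Constructibility $V=L$ holds---asserts that every $R$-projective $R$-module is projective. Chaining this with the equivalence established in the previous paragraph gives: $M$ max-projective $\Longrightarrow$ $M$ is $R$-projective $\Longrightarrow$ $M$ is projective, which is exactly the conclusion of the corollary.

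There is no real obstacle here; the only thing to be careful about is making sure the equivalence in Remark \ref{max-projectivesR-projective} is actually being applied in the direction needed, namely that max-projective implies $R$-projective (not just the converse, which is free from the implications chart preceding Proposition \ref{maxprojectivesemilocal}). This equivalence is the content of Remark \ref{max-projectivesR-projective} and relies entirely on the very specific structure of $R$, so one sentence explaining that we pass from max-projective to $R$-projective via the remark, and then from $R$-projective to projective via \cite[Theorem 3.3]{trlifaj}, suffices.
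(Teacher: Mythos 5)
Your proposal is correct and follows exactly the paper's route: the paper derives the corollary by combining Remark \ref{max-projectivesR-projective} (max-projective $\Leftrightarrow$ $R$-projective for this ring) with \cite[Theorem 3.3]{trlifaj} under $V=L$. Your additional unpacking of why the remark holds (only the test against $\Soc(R)$ matters, the other simples being projective summands) is consistent with the cited lemmas of Trlifaj and adds nothing that conflicts with the paper.
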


\begin{lemma}\label{maxprojectivefactor}
If $M_{R}$ is max-projective and $\bar{R}=R/J(R)$, then $(M/\Rad(M))_{\bar{R}}$ is max-projective.
\end{lemma}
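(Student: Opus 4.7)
The plan is to unwind definitions carefully and lift a homomorphism $\bar f : M/\Rad(M) \to \bar R/\bar I$ over $\bar R$ to an $R$-linear map $M \to R$ using max-projectivity of $M$, then push the result back down to $\bar R$. The key observation is that every maximal right ideal $\bar I$ of $\bar R$ has the form $I/J(R)$ for some maximal right ideal $I \supseteq J(R)$ of $R$, and under this correspondence $\bar R/\bar I \cong R/I$ as $R$-modules (with $\bar R$ acting through $R \to \bar R$). Since $M \cdot J(R) \subseteq \Rad(M)$, the quotient $M/\Rad(M)$ is naturally a right $\bar R$-module; and any $R$-linear map between $\bar R$-modules is automatically $\bar R$-linear, so I do not need to worry about scalar issues.

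First I would fix an arbitrary $\bar R$-homomorphism $\bar f : M/\Rad(M) \to \bar R/\bar I$ and form the composite $f := \bar f \circ q$ (followed by the identification $\bar R/\bar I \cong R/I$), where $q : M \to M/\Rad(M)$ is the canonical projection; this $f$ is an $R$-module homomorphism into $R/I$. Since $I$ is a maximal right ideal of $R$ and $M$ is max-projective, there is an $R$-homomorphism $h : M \to R$ with $\pi h = f$, where $\pi : R \to R/I$ is the canonical epimorphism. Composing $h$ with $\phi : R \to \bar R = R/J(R)$ yields an $R$-homomorphism $\tilde h := \phi h : M \to \bar R$.

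The crucial step is showing that $\tilde h$ factors through $q$, i.e.\ that $\tilde h(\Rad(M)) = 0$. For this I would argue that $\Rad(\bar R_R) = 0$: the $R$-submodules of $\bar R$ coincide with its $\bar R$-submodules (the $R$-action factoring through $\bar R$), so $\Rad(\bar R_R) = \Rad(\bar R_{\bar R}) = J(\bar R) = 0$. Since any $R$-homomorphism sends radicals into radicals, $\tilde h(\Rad(M)) \subseteq \Rad(\bar R_R) = 0$, producing a unique $\bar h : M/\Rad(M) \to \bar R$ with $\bar h \circ q = \tilde h$. Finally I would verify $\bar\pi \circ \bar h = \bar f$ (where $\bar\pi : \bar R \to \bar R/\bar I$) by chasing diagrams: under $\bar R/\bar I \cong R/I$, the composition $\bar\pi \phi$ corresponds to $\pi$, so $\bar\pi \bar h q = \bar\pi \tilde h = \pi h = f = \bar f q$, and cancelling the epimorphism $q$ gives $\bar\pi \bar h = \bar f$.

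I do not anticipate a serious obstacle; the only point that requires care is the identification $\Rad(\bar R_R) = 0$, which ensures the lift $\tilde h$ kills $\Rad(M)$ and descends to $M/\Rad(M)$. Everything else is routine diagram chasing together with the standard bijection between maximal right ideals of $\bar R$ and maximal right ideals of $R$ containing $J(R)$.
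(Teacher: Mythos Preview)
Your proposal is correct and follows essentially the same approach as the paper: compose with the quotient map $M\to M/\Rad(M)$, lift via max-projectivity, use $\Rad(\bar R_R)=0$ to kill $\Rad(M)$, and factor through the quotient. The only cosmetic difference is that the paper invokes Lemma~\ref{Rprojmaxproj}(3) to lift directly to $\bar R$, whereas you lift to $R$ first and then compose with $R\to\bar R$; the two are equivalent.
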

\begin{proof}
Let $\pi:\bar{R}_{\bar{R}}\rightarrow K_{\bar{R}}$ be an $\bar{R}$-epimorphism with $K_{\bar{R}}$ simple $\bar{R}$-module. Consider the following diagram:
\[
\xymatrix{
 M\ar[r]^{\eta} & M/Rad(M)  \ar[d]^{f} \\
\bar{R}_{\bar{R}} \ar[r]^\pi & K_{\bar{R}} \ar[r] & 0
}
\]
Since $M$ is max-projective, there exists a homomorphism $\lambda:M_{R}\rightarrow \bar{R}_{R}$ such that $\pi\lambda=f\eta$. Since $\lambda(\Rad(M))\subseteq \Rad(R/J(R))=0$, $\Rad(M)\subseteq \Ker(\lambda)$, and so there exists a homomorphism $\delta:(M/\Rad(M))_{R}\rightarrow \bar{R}_{R}$ such that $\delta\eta=\lambda$. Now, since $\pi\delta\eta=\pi\lambda=f\eta$ and $\eta$ is an epimorphism, $\pi\delta=f$, and so $(M/\Rad(M))_{\bar{R}}$ is a max-projective $\bar{R}$-module.
\end{proof}

It is well-known that a ring $R$ is \emph{semiperfect} if and only if every simple $R$-module has a projective cover. In the next Proposition, we extend this result by replacing projective covers with max-projective covers. Let $R$ be a ring and $\Omega$ be a class of right $R$-modules which is closed under isomorphisms. A homomorphism $f:P \rightarrow M $ is called an $\Omega$-cover of the right $R$-module $M$, if $P \in \Omega $ and $f$ is an epimorphism with small kernel. That is to say, if $\Omega$ is the class of max-projective right $R$-modules, the homomorphism $f:P \rightarrow M $ is called max-projective cover of $M$. With the help of an argument similar to the one provided in \cite[Theorem 18]{radprojective}, we can establish the next Proposition.
\begin{proposition}\label{maxprojectivecover}
For a ring $R$, the following are equivalent.
\begin{enumerate}
\item $R$ is semiperfect.
\item Every finitely generated right $R$-module has a max-projective cover.
\item Every cyclic right $R$-module has a max-projective cover.
\item Every simple right $R$-module has a max-projective cover.
\end{enumerate}
\end{proposition}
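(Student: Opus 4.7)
The plan is to verify the four conditions are equivalent via the cycle $(1) \Rightarrow (2) \Rightarrow (3) \Rightarrow (4) \Rightarrow (1)$. For $(1) \Rightarrow (2)$, I would invoke \cite[Theorem 27.6]{Anderson-Fuller:RingsandCategoriesofModules}: over a semiperfect ring every finitely generated right $R$-module admits a projective cover, and any projective cover is automatically a max-projective cover because projective modules are max-projective by Example \ref{example}(a). The implications $(2) \Rightarrow (3) \Rightarrow (4)$ are immediate, since simple modules are cyclic and cyclic modules are finitely generated.

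The substance of the proposition lies in $(4) \Rightarrow (1)$. My strategy is to show that, assuming $(4)$, every simple right $R$-module actually has a \emph{projective} cover; then $R$ is semiperfect by \cite[Theorem 27.6]{Anderson-Fuller:RingsandCategoriesofModules}. So fix a simple right $R$-module $S$ and let $f : P \to S$ be a max-projective cover as granted by $(4)$. First I would establish that $P$ is cyclic and local: picking $p \in P$ with $f(p) \neq 0$, simplicity of $S$ gives $f(pR) = S$, so $pR + \Ker f = P$, and the smallness of $\Ker f$ forces $pR = P$; hence $P \cong R/L$ for the right ideal $L = \{a \in R : pa = 0\}$. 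Moreover, $\Ker f$ is a maximal submodule of $P$ and, being small, is contained in every other maximal submodule, so $\Ker f = \Rad(P)$ and $L$ lies in a unique maximal right ideal $I$ of $R$, corresponding to $I/L = \Rad(P)$.

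With this setup, by the max-projectivity of $P$ the map $f$ lifts through the canonical projection $\pi : R \to R/I$, producing $h : P \to R$ with $\pi h = f$. Setting $r := h(1+L)$ we obtain $r - 1 \in I$ (so $r \notin I$), $rL = 0$, and $h(P) = rR$; also $\Ker h \subseteq \Ker f \ll P$, so $h$ factors through an epimorphism $P \twoheadrightarrow rR$ with small kernel. Following the template of \cite[Theorem 18]{radprojective}, I would now argue that this configuration forces $rR$ to be a direct summand of $R$, hence projective. Once this is in hand, the identity $rR \to rR$ lifts through $h$ to split it, and since a direct summand that is small in the ambient module must vanish, $\Ker h = 0$ and $P \cong rR$ is projective; thus $f$ is a projective cover of $S$, and $R$ is semiperfect.

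The main obstacle I anticipate is this splitting step: extracting the direct summand of $R$ isomorphic to $rR$ from the triple $(r, L, I)$. It is precisely here that one must use simultaneously the uniqueness of $I$ over $L$ (the local structure of $P$) and the max-projective lifting property — in essence, producing the local idempotent that encodes semiperfectness — and this is the content that the argument of \cite[Theorem 18]{radprojective} must be carefully transferred from the rad-projective setting to the max-projective one.
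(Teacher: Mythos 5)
Your reductions $(1)\Rightarrow(2)\Rightarrow(3)\Rightarrow(4)$ and your analysis of the cover $f:P\to S$ (that $P$ is cyclic and local with $\Ker f=\Rad(P)$, and that $P\cong R/L$ with $L$ contained in a unique maximal right ideal $I$) are correct, but the proof of $(4)\Rightarrow(1)$ has a genuine gap exactly where you flag it, and the gap is not merely a technicality to be ``carefully transferred'': the route you chose does not close. Lifting $f$ against the projection $\pi:R\to R/I$ only yields $h:P\to R$ with $h(P)+I=R$; since $I$ is a maximal right ideal, not a small one, this gives no surjectivity of $h$ and hence no splitting. The assertion that the triple $(r,L,I)$ forces $rR$ to be a direct summand of $R$ is precisely the semiperfectness you are trying to prove (it amounts to producing a local idempotent over $I$), and nothing in the max-projective lifting of a single simple module supplies it. Note also that there is no general principle making a cyclic local max-projective module projective: Proposition \ref{maxprojectiveradsmall}(1) gives this only over rings already known to be semiperfect.

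The paper's argument avoids the obstruction by working in two stages. First it shows $\bar R=R/J(R)$ is semisimple: for a simple $S$ with max-projective cover $P$, Lemma \ref{maxprojectivefactor} makes $(P/\Rad(P))_{\bar R}\cong S$ a max-projective $\bar R$-module, which is then projective by Example \ref{example}(c); hence every simple $\bar R$-module is projective. Second, writing $\bar R_R=\oplus_{i=1}^n K_i$ and letting $L=\oplus_{i=1}^n L_i$ be the direct sum of the max-projective covers of the $K_i$, it lifts the cover map $L\to\bar R_R$ against the canonical projection $R\to\bar R$ --- whose kernel $J(R)$ \emph{is} small in $R$. That smallness is what forces the lift $g:L\to R$ to be onto, so $g$ splits by projectivity of $R$, and $\Ker(g)\subseteq\Ker(f)\ll L$ forces $L\cong R$, whence each $L_i$ is projective. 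To repair your proof you need both missing ingredients: the passage to $R/J(R)$ (so that you lift against a small epimorphism rather than $R\to R/I$) and the preliminary semisimplicity of $\bar R$ (so that the target of the lift is all of $\bar R_R$ at once rather than one simple factor).
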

\begin{proof}
$(1)\Rightarrow (2)\Rightarrow (3)\Rightarrow (4)$ are clear.\\
$(4)\Rightarrow (1)$ We first show that $\bar{R} = R/J(R)$ is a semisimple ring. Let $S$ be a simple right $\bar{R}$-module. By the hypothesis $S_{R}$ has a max-projective cover $P_{R}$, say $f:P\rightarrow S$ with $Rad(P)=\Ker(f) \ll P$. Since $S$ is simple and $P/\Rad(P) \cong S$, $P/\Rad(P)$ is a simple $\bar{R}$-module. So, $(P/\Rad(P))_{\bar{R}} $ is max-projective by Lemma \ref{maxprojectivefactor}, whence $(P/\Rad(P))_{\bar{R}}$ is projective. Consider the map $\tilde{f}:P/\Rad(P)\rightarrow S$. This map induces an isomorphism. Since $P/\Rad(P)$ is projective $\bar{R}$-module, $P/\Rad(P)$ is the projective cover of $S_{\bar{R}}$. Hence, $\bar{R}$ is a semiperfect ring. Therefore, $\bar{R}$ is semisimple as an $\bar{R}$-module, and hence semisimple as an $R$-module. Write $\bar{R}= R/J(R)=\oplus^{n}_{i=1}K_{i}$, with each $K_{i}$ simple as a right $R$-module, and let $L_{i}$ be a max-projective cover of $K_{i}$,  $1 \leq i \leq n$, as right $R$-modules. Now, in order to prove that R is a semiperfect ring, it is enough to show that each $L_{i}$, $1 \leq i \leq n$, is projective as a right $R$-module. Clearly, $L=\oplus^{n}_{i=1}L_{i}$, as a right R-module, is a max-projective cover of $\bar{R}_{R}$. Consider the diagram
\[
\xymatrix{
  & L_{R} \ar@{.>}[dl]_{\exists g} \ar[d]^{f} \\
R_{R} \ar[r]^\pi & \bar{R}_{R} \ar[r] & 0
}
\]
with $f$ being the max-projective cover of $\bar{R}_{R}$, and $\pi$ the canonical $R$-epimorphism. By the max-projectivity of $L_{R}$, $f$ can be lifted to a map $g:L_{R}\rightarrow R_{R}$ such that $\pi g=f$. Since $R=\Image(g)+ J(R)$ and $J(R)\ll R$ we infer that $R=\Image(g)$ and $g$ is onto. By the projectivity of $R$, the map $g$ splits and $L_{R}=\Ker(g)\oplus A$ for a submodule $A$ of $L_{R}$. Since $\Ker(g)\subseteq \Ker(f)\ll L_{R}$, $\Ker(g)=0$ and $L_{R}\cong R_{R}$ is projective. Therefore, each
$L_{i}$, $1 \leq i \leq n$, is projective as a right $R$-module, and $R$ is semiperfect.
\end{proof}

\begin{proposition}\label{maxprojectiveperfect}
For a ring $R$, the following conditions are equivalent.
\begin{enumerate}
\item $R$ is right perfect.
\item Every right $R$-module has a max-projective cover.
\item Every semisimple right $R$-module has a max-projective cover.
\end{enumerate}
\end{proposition}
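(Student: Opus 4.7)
My plan is to prove the cycle $(1)\Rightarrow(2)\Rightarrow(3)\Rightarrow(1)$. For $(1)\Rightarrow(2)$: Bass's theorem \cite[Theorem 28.4]{Anderson-Fuller:RingsandCategoriesofModules} gives that over a right perfect ring every right $R$-module admits a projective cover, and since projective modules are max-projective by Example~\ref{example}(a), these are automatically max-projective covers. The implication $(2)\Rightarrow(3)$ is trivial because semisimple modules are modules.

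The substantive direction is $(3)\Rightarrow(1)$, which I would approach by first upgrading max-projective covers of semisimple modules to genuine projective covers. Specializing $(3)$ to simple modules and applying Proposition~\ref{maxprojectivecover} gives at once that $R$ is semiperfect, hence semilocal. Now let $M$ be any semisimple right $R$-module and $f\colon P\to M$ a max-projective cover supplied by $(3)$. Since $\Ker(f)$ is small in $P$, one has $\Ker(f)\subseteq \Rad(P)$, while the surjectivity of $f$ together with $\Rad(M)=0$ gives $\Rad(P)\subseteq \Ker(f)$. Hence $\Rad(P)=\Ker(f)$ is small in $P$, and Proposition~\ref{maxprojectiveradsmall}(1) (which applies because $R$ is semiperfect) promotes $P$ to a projective module. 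Therefore $f\colon P\to M$ is in truth a projective cover of $M$, and every semisimple right $R$-module admits a projective cover.

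To close the argument I would apply this to the distinguished semisimple module $(R/J(R))^{(\omega)}$. Over a semiperfect ring, the structure theorem for projective modules forces any projective cover of $(R/J(R))^{(\omega)}$ to be isomorphic to the canonical epimorphism $R^{(\omega)}\to (R/J(R))^{(\omega)}$ (matching principal indecomposable summands to simple summands on the top), so the kernel $J(R)^{(\omega)}$ must be small in $R^{(\omega)}$. This is precisely the $T$-nilpotency of $J(R)$ in Bass's sense, and together with the semisimplicity of $R/J(R)$ it yields right perfectness of $R$ via \cite[Theorem 28.4]{Anderson-Fuller:RingsandCategoriesofModules}. The main obstacle I anticipate is this last identification: translating the existence of a projective cover for $(R/J(R))^{(\omega)}$ into the $T$-nilpotency condition on $J(R)$, which relies on the structure theorem for projective modules over semiperfect rings together with Bass's characterization of right perfect rings. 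All earlier steps are direct applications of results already established in Section~2.
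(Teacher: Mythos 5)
Your argument is correct and follows the paper's proof essentially step for step: specialize to simple modules and use Proposition~\ref{maxprojectivecover} to get that $R$ is semiperfect, then use Proposition~\ref{maxprojectiveradsmall}(1) to upgrade every max-projective cover of a semisimple module to a projective cover, and conclude right perfectness from the existence of projective covers for all semisimple modules. The only difference is that you spell out this last step explicitly (via $(R/J(R))^{(\omega)}$ and $T$-nilpotency of $J(R)$), whereas the paper simply invokes it as a known characterization of right perfect rings.
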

\begin{proof}
$(1)\Rightarrow (2)\Rightarrow (3)$ are clear.

$(3)\Rightarrow (1)$ By Proposition \ref{maxprojectivecover}, $R$ is a semiperfect ring. Let $M$ be a semisimple right $R$-module and $f:P\rightarrow M$ be a max-projective cover of $M$. Since $Rad(P)=\Ker(f) \ll P$, $P$ is projective by Proposition \ref{maxprojectiveradsmall}(1). Thus every semisimple right $R$-module has a projective cover, and so $R$ is right perfect.
\end{proof}

Let $R$ be any ring and $M$ be an $R$-module. A submodule $N$ of $M$ is called radical submodule  if $N$ has no maximal submodules, i.e. $N = \Rad(N)$. By $P(M)$ we denote the sum of all radical submodules of a module $M$. For any module $M$, $P(M)$ is the largest radical submodule of $M$, and so $\Rad(P(M))=P(M)$. Moreover, $P$ is an idempotent radical with $P(M)\subseteq \Rad(M)$ and $P(M/P(M))=0$, (see \cite{radsupp}).

In \cite[Lemma 1]{primehereditarynoether}, the authors prove that over a right nonsingular right $V$-ring, max-projective right $R$-modules are nonsingular.
Regarding the converse of this fact, we have the following.

\begin{proposition}
If every max-projective right $R$-module is nonsingular, then $R$ is right nonsingular and right max-ring.
\end{proposition}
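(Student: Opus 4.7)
The plan is to handle the two conclusions separately. For right nonsingularity the observation is immediate: $R_{R}$ is projective, hence max-projective, so $Z(R_{R})=0$ directly from the hypothesis.

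For the right max-ring conclusion I would argue by contradiction. Suppose $R$ is not a right max-ring; then there exists a nonzero right $R$-module $M$ with $\Rad(M)=M$. By Example \ref{example}(d), $M$ is max-projective, hence $Z(M)=0$ by hypothesis. The idea is to manufacture from $M$ a \emph{singular} max-projective module, which would contradict the hypothesis.

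Since $\Rad(M)=M\neq 0$ while semisimple modules have zero radical, $M$ is not semisimple; therefore there is a submodule $L\leq M$ that is not a direct summand. A standard Zorn's lemma argument---choose $C$ maximal among submodules of $M$ with $L\cap C=0$---yields a proper essential submodule $N=L\oplus C$ of $M$. Then $M/N$ is singular (since $N$ is essential in $M$), nonzero (since $N\subsetneq M$), and radical, because any maximal submodule of $M/N$ would pull back to a maximal submodule of $M$ containing $N$, and no such submodule exists. Hence $M/N$ is max-projective by Example \ref{example}(d), and the hypothesis forces $Z(M/N)=0$, contradicting the fact that $M/N$ is a nonzero singular module.

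The only mildly non-routine step is producing the proper essential submodule $N$, and that reduces to the classical fact that every non-semisimple module possesses one. Everything else is bookkeeping with the closure properties of radical modules and the standard implication that essential submodules yield singular quotients.
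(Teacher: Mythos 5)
Your proof is correct, but it takes a genuinely different route from the paper's. The paper first disposes of the $V$-ring case, then for a noninjective simple $S$ shows $P(E(S))=0$ by observing that otherwise $P(E(S))/S$ would be a nonzero singular radical (hence max-projective) module; finally, given $M$ with $\Rad(M)=M$, it maps $M$ nontrivially into some $E(S)$ and derives a contradiction from $P(E(S))=0$. You instead work directly inside the radical module $M$: since $\Rad(M)=M\neq 0$ forces $M$ to be non-semisimple, you pick a submodule $L$ that is not a direct summand, form a complement $C$ maximal with $L\cap C=0$, and obtain a proper essential submodule $N=L\oplus C$; then $M/N$ is a nonzero singular module with $\Rad(M/N)=M/N$, hence max-projective by Example \ref{example}(d), contradicting the hypothesis. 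All the ingredients you invoke are standard (a module is semisimple iff every submodule is a summand; $L\oplus C$ is essential for a complement $C$; quotients by essential submodules are singular; quotients of radical modules are radical), so the argument is complete. Your version is shorter and more elementary: it avoids injective hulls, the preradical $P$, and the case split on whether $R$ is a $V$-ring. What the paper's route buys in exchange is the intermediate fact that $P(E(S))=0$ for every noninjective simple $S$ under the hypothesis, which is of some independent interest but is not needed for the stated conclusion.
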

\begin{proof}
Clearly the ring $R$ is right nonsingular. If $R$ is right $V$-ring, then $\Rad (M)=0$ for any right $R$-module $M$. Thus $R$ is a max-ring. Suppose $R$ is not right $V$-ring and let $S$ be a noninjective simple right $R$-module. We shall first see that $P(E(S))=0$. Suppose $\Rad(P(E(S)))=P(E(S))\neq 0$. Then $P(E(S))/S$  is singular. Furthermore, since $\Rad(P(E(S))/S)=P(E(S))/S$, $P(E(S))/S$ is max-projective. This contradicts with the hypothesis. Therefore, for every simple right $R$-module $S$, $P(E(S))=0$. Let $M$ be a nonzero right $R$-module. We claim that $\Rad(M)\neq M$. Assume to the contrary that $\Rad(M)=M$. Let $0\neq x \in M$ and $K$ be a maximal submodule of $xR$. Then the simple right $R$-module $S=xR/K$ is noninjective, because $S$ small. Now, the obvious map $xR\to E(S)$ extends to a nonzero map $f:M\rightarrow E(S)$. Since $P(\Image (f))\subseteq P(E(S))=0$, $P(M/\Ker (f))=0$. This contradicts with $P(M)=M$. Hence $\Rad (M)\neq M$ for every right $R$-module $M$, and so $R$ is a right max-ring.
 \end{proof}

\begin{corollary}
For a ring $R$, the following are equivalent.
\begin{enumerate}
\item $R$ is semilocal and every max-projective right $R$-module is nonsingular.
\item $R$ is  right perfect and right nonsingular.
\end{enumerate}
\end{corollary}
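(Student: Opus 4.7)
The plan is to derive both implications directly from already-established results in the excerpt, without introducing any new technical machinery.

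For the implication $(2)\Rightarrow(1)$, I will first note that every right perfect ring is semilocal, which gives the first half of (1). Next, I will invoke the Corollary that follows Proposition \ref{maxprojectiveradsmall}, which states that over a right perfect ring every max-projective right $R$-module is projective. It then remains to show that projective modules over a right nonsingular ring are nonsingular: because $Z(R_R)=0$, every free right $R$-module has zero singular submodule (singular submodules distribute over direct sums), and since every projective module embeds in a free module, and submodules inherit nonsingularity, the conclusion follows.

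For the implication $(1)\Rightarrow(2)$, I will apply the Proposition immediately preceding this corollary, whose hypothesis is precisely that every max-projective right $R$-module is nonsingular. That proposition yields both that $R$ is right nonsingular and that $R$ is a right max-ring. Combining "right max-ring" with the assumption that $R$ is semilocal, I will cite \cite[Theorem 28.4]{Anderson-Fuller:RingsandCategoriesofModules} (Bass's theorem) — exactly as used in the proof of Proposition \ref{maxprojectiveradsmall}(2) — to conclude that $R$ is right perfect, which is the remaining part of (2).

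Since both directions are essentially bookkeeping with prior results, I do not expect any real obstacle. The only care needed is to cite the correct statements within the paper and to record the elementary observation on projectivity versus nonsingularity, which is the only step not explicitly stated earlier.
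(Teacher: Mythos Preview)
Your proposal is correct and follows exactly the route the paper intends: the corollary is stated without proof immediately after the preceding Proposition precisely because $(1)\Rightarrow(2)$ combines that Proposition with the semilocal-plus-max-ring argument of Proposition~\ref{maxprojectiveradsmall}(2), while $(2)\Rightarrow(1)$ uses the Corollary to Proposition~\ref{maxprojectiveradsmall} together with the elementary fact that projectives over a right nonsingular ring are nonsingular. There is nothing to add.
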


\section{Almost-$QF$ and max-$QF$ rings}

Recall that a ring $R$ is \emph{$QF$} if and only if every injective (right) $R$-module is projective (see, \cite{FaithQF}). We slightly weaken this condition and obtain the following definition.
\begin{definition}
A ring $R$ is called right \emph{almost-$QF$} if every injective right $R$-module is $R$-projective. We call $R$ \emph{right max-$QF$}, if every injective right $R$-module is max-projective. Left almost-$QF$ and left max-$QF$ rings are defined similarly.
\end{definition}
Clearly, we have the following inclusion relationship:\\

\{$QF$ rings\} $\subseteq$ \{right almost-$QF$ rings\} $\subseteq$ \{ right max-$QF$ rings\}.\\

\begin{example} The ring of integers $\Z$, is a right almost-$QF$ but not $QF$: For every injective $\Z$-module $E$, we have $\Rad (E)=E$. Thus $\Hom (E, \Z /n\Z)=0$, for each cyclic $\Z$-module $\Z /n\Z$. This means that each injective $\Z$-module is $\Z$-projective,and so $\Z$ is almost-$QF$.
\end{example}

\begin{remark}
Sandomierski \cite{testing} proved that if $R$ is a right perfect ring, then every $R$-projective right module is projective. Thus a ring $R$ is right perfect and right almost-$QF$ if and only if $R$ is $QF$.
\end{remark}

\begin{proposition}
Let $R$ and $S$ be Morita equivalent rings. Then, $R$ is right almost-$QF$ if and only if $S$ is  right almost-$QF$.
\end{proposition}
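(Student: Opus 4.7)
The plan is to show that the class of right almost-$QF$ rings is closed under Morita equivalence. Fix a Morita equivalence $F\colon \text{Mod-}R \to \text{Mod-}S$, with quasi-inverse $G$; my aim is to check that $F$ preserves injectivity and sends $R$-projective modules to $S$-projective modules, so the defining property transfers from $R$ to $S$, and symmetrically via $G$.

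First I would collect two standard facts about $F$. Injectivity is characterised by exactness of $\Hom(-,E)$ and is therefore categorical, so $F$ preserves injective modules. Secondly, $F$ carries the progenerator $R_R$ to a progenerator $P := F(R_R)$ of $\text{Mod-}S$; in particular, since $P$ generates $\text{Mod-}S$, the module $S_S$ is isomorphic to a direct summand of $P^n$ for some $n \geq 1$. Next, I would record that $F$ preserves relative projectivity in the sense of this paper: if $M$ is $N$-projective in $\text{Mod-}R$, then $F(M)$ is $F(N)$-projective in $\text{Mod-}S$; this is immediate from the natural isomorphism $\Hom_R(M, X) \cong \Hom_S(F(M), F(X))$ together with the fact that $F$ sends the epimorphism $N \twoheadrightarrow N/K$ to an epimorphism.

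With these in hand, the transfer is straightforward. Let $M$ be an $R$-projective right $R$-module; then $F(M)$ is $P$-projective. By the equivalence of $P$-projectivity with $P^n$-projectivity (a special case of \cite[Prop.~16.12]{Anderson-Fuller:RingsandCategoriesofModules}), $F(M)$ is $P^n$-projective; and applying Lemma \ref{directsumofmaxprojectives}(2) to a split sequence $0 \to C \to P^n \to S_S \to 0$, where $P^n \cong S_S \oplus C$, yields that $F(M)$ is $S_S$-projective, i.e.\ $S$-projective. Now, assuming $R$ is right almost-$QF$, let $E$ be any injective right $S$-module; then $G(E)$ is an injective right $R$-module, hence $R$-projective by hypothesis, and so $E \cong F(G(E))$ is $S$-projective by the transfer above. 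The converse is symmetric via $G$. The only nonroutine step is the bridge from $P$-projectivity to $S_S$-projectivity, which I handle by combining Lemma \ref{directsumofmaxprojectives}(2) with the fact that $S_S$ is a direct summand of $P^n$.
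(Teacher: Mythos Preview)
Your argument is correct and follows essentially the same route as the paper's proof: both rely on the Morita-invariance of injectivity and of relative projectivity, together with the fact that $R$-projectivity is controlled by projectivity relative to finitely generated projectives (equivalently, to finite direct sums and summands of $R_R$). The only difference is one of packaging: the paper first observes that ``$M$ is $R$-projective'' is equivalent to ``$M$ is $N$-projective for every finitely generated projective $N$'', which is a manifestly Morita-invariant condition, and then invokes \cite[Propositions~21.6 and~21.8]{Anderson-Fuller:RingsandCategoriesofModules} in one stroke; you instead track the transfer explicitly through the progenerator $P=F(R_R)$ and the chain $P$-projective $\Rightarrow$ $P^n$-projective $\Rightarrow$ $S_S$-projective. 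Your version is slightly more hands-on and self-contained, while the paper's formulation makes the Morita-invariance transparent from the outset, but the underlying content is identical.
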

\begin{proof}
An R-module $M$ is $R$-projective if and only if $M$ is $N$-projective for any finitely generated projective $R$-module $N$. Now, by \cite[propositions 21.6 and 21.8 ]{Anderson-Fuller:RingsandCategoriesofModules}, since injectivity, relative projectivity and being finitely generated are preserved by Morita equivalence, the proof is clear.
\end{proof}

\begin{lemma} \label{product}
Let $R_{1}$ and $R_{2}$ be rings. Then $R=R_{1}\times R_{2}$ is right almost-$QF$ (resp. right max-$QF$) if and only if $R_{1}$ and $R_{2}$ are both right almost-$QF$ (resp. right max-$QF$).
\end{lemma}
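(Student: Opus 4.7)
The plan is to reduce everything to the standard decomposition of modules over a product ring. Writing $e_1=(1,0)$ and $e_2=(0,1)$ for the central idempotents in $R=R_1\times R_2$, every right $R$-module $M$ splits as $M=M_1\oplus M_2$ with $M_i:=Me_i$; each $M_i$ is naturally a right $R_i$-module, and conversely any right $R_i$-module becomes a right $R$-module by letting $e_{3-i}$ act as zero. Under this correspondence, $\Hom_R(M,N)=\Hom_{R_1}(M_1,N_1)\oplus\Hom_{R_2}(M_2,N_2)$, so module-theoretic tests over $R$ split componentwise.

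First I would record three standard bookkeeping facts: (i) $M=M_1\oplus M_2$ is injective over $R$ iff each $M_i$ is injective over $R_i$; (ii) every right ideal $J$ of $R$ has the form $J=J_1\times J_2$ with $J_i$ a right ideal of $R_i$, and the maximal ones are precisely $I_1\times R_2$ and $R_1\times I_2$ with $I_i\subset R_i$ maximal; and (iii) $R/J\cong R_1/J_1\oplus R_2/J_2$ as $R$-modules, with $R/(I_1\times R_2)\cong R_1/I_1$ concentrated in the first factor.

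Next I would show that $M=M_1\oplus M_2$ is $R$-projective (respectively max-projective) over $R$ iff $M_i$ is $R_i$-projective (respectively max-projective) over $R_i$ for $i=1,2$. Given a test epimorphism $R\to R/J$ and $g:M\to R/J$, both split into the pairs $(R_i\to R_i/J_i)$ and $(g_i:M_i\to R_i/J_i)$; a lift $h:M\to R$ is the same datum as a pair of lifts $h_i:M_i\to R_i$. The same argument works verbatim for maximal ideals using (ii), since a maximal right ideal of $R$ produces a test problem living entirely in one component $R_i$, trivial in the other.

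Combining these, every injective right $R$-module $E=E_1\oplus E_2$ is $R$-projective (respectively max-projective) iff each $E_i$ is $R_i$-projective (respectively max-projective). For the forward direction, given an injective $R_i$-module $E_i$, view $E_i$ as an $R$-module through the $i$-th projection; by (i) it is injective over $R$, hence (by hypothesis) $R$-projective/max-projective over $R$, and the componentwise characterisation yields the same property over $R_i$. For the reverse direction, decompose an arbitrary injective $R$-module and apply the componentwise characterisation in the other direction. No step is a real obstacle; the only care needed is in the setup of the decomposition and in checking that the characterisation of max-projectivity transfers cleanly using (ii).
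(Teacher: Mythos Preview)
Your proposal is correct and follows essentially the same approach as the paper: both arguments exploit the central idempotent decomposition $M=Me_1\oplus Me_2$ to reduce the (max-)$R$-projectivity of an injective $R$-module to the corresponding property over each factor $R_i$. The paper compresses your componentwise lifting characterization into an appeal to the direct-sum stability of (max-)$R$-projectivity (Lemma~\ref{directsumofmaxprojectives}), while you spell out the bijection between lifting problems over $R$ and pairs of lifting problems over $R_1,R_2$; these are two presentations of the same idea.
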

\begin{proof}
Let $M$ be an injective right $R_{1}$-module. Then $M$ is an injective right $R$-module, as well as an $R$-projective module by the hypothesis. Hence, by Lemma \ref{directsumofmaxprojectives}, $M$ is $R_{1}$-projective, and so $R_{1}$ is right almost-$QF$. Similarly, $R_{2}$ is right almost-$QF$. Conversely, let $M$ be an injective right $R$-module. Since we have the decomposition $M=MR_{1}\oplus MR_{2}$, $MR_{1}$ is an injective right $R$-module, whence it is an injective right $R_{1}$-module. On the other hand, since $(MR_{2})R_{1}=0$, $MR_{2}$ is an $R_{1}$-module, and so it is an injective $R_{1}$-module. This means that $MR_{1}$ and $MR_{2}$ are $R_{1}$-projective by the hypothesis. Then, by Lemma \ref{directsumofmaxprojectives}, $M=MR_{1}\oplus MR_{2}$ is $R_{1}$-projective. Similarly, $M$ is $R_{2}$-projective. Therefore, $M$ is $R$-projective. Since it is similar to the one provided for almost-$QF$ rings, the proof is omitted for max-$QF$ rings.
\end{proof}
\begin{proposition}  \label{injind} Let $R$ be a right Hereditary ring and $E$ be an indecomposable injective right $R$-module. Then the following are equivalent.
 \begin{enumerate}
\item $E$ is $R$-projective.
\item $E$ is max-projective.
\item Either $\Rad(E)=E$ or $E$ is projective.
\end{enumerate}
\end{proposition}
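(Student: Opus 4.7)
The plan is to establish the cycle $(1)\Rightarrow(2)\Rightarrow(3)\Rightarrow(1)$. The implication $(1)\Rightarrow(2)$ is immediate, since every simple quotient $R/I$ with $I$ maximal is in particular a cyclic quotient of $R$, so $R$-projectivity specializes to max-projectivity (this also sits inside the implication chain ``$R$-projective $\Rightarrow$ rad-projective $\Rightarrow$ max-projective'' noted in the paper).

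For $(2)\Rightarrow(3)$ I would assume that $E$ is max-projective with $\Rad(E)\neq E$ and aim to show $E$ is projective. The hypothesis $\Rad(E)\neq E$ produces a maximal submodule, hence a surjection $f\colon E\to S$ onto some simple $S\cong R/I$ for a maximal right ideal $I$. Max-projectivity applied to the canonical projection $\pi\colon R\to R/I$ yields a lift $g\colon E\to R$ with $\pi g=f$. Now invoke hereditarity twice: the image $g(E)$ is a submodule of the projective module $R$, so $g(E)$ is projective, while at the same time $g(E)\cong E/\Ker g$ is a quotient of the injective module $E$, so $g(E)$ is injective. Thus the sequence $0\to\Ker g\to E\to g(E)\to 0$ splits, giving a direct summand decomposition of $E$. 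Since $E$ is indecomposable and $g\neq 0$ (because $\pi g=f$ is nonzero), we must have $\Ker g=0$, so $E\cong g(E)$ is projective.

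For $(3)\Rightarrow(1)$ the projective case is trivial, so suppose $\Rad(E)=E$, and show the stronger fact that $\Hom(E,R/I)=0$ for every right ideal $I$, which makes $R$-projectivity vacuous. Given $h\colon E\to R/I$, we have $h(E)=h(\Rad E)\subseteq \Rad(h(E))$, so $h(E)$ equals its own radical. By hereditarity $h(E)$ is injective as a quotient of $E$, hence a direct summand of the cyclic module $R/I$, hence itself cyclic; in particular $h(E)$ is finitely generated. For finitely generated modules $\Rad(N)=NJ(R)$, so Nakayama's lemma forces $h(E)=0$. The principal input throughout is the pair of dual consequences of hereditarity, namely that submodules of projectives stay projective and quotients of injectives stay injective; combining these with indecomposability handles $(2)\Rightarrow(3)$, while pairing them with the cyclic-summand trick and Nakayama's lemma handles the radical case of $(3)\Rightarrow(1)$. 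No step should pose a serious obstacle; the main piece of care is simply ensuring that the lifted map $g$ is genuinely nonzero in the step $(2)\Rightarrow(3)$, which is built into the surjectivity of $f$.
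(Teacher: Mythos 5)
Your argument follows essentially the same route as the paper's: for $(2)\Rightarrow(3)$ you lift a nonzero map onto a simple factor, use hereditarity to make $g(E)$ projective so that $E\to g(E)$ splits, and conclude by indecomposability; for $(3)\Rightarrow(1)$ you show $\Hom(E,R/I)=0$ for every right ideal $I$ using that $f(E)$ is an injective, hence direct summand, of $R/I$. One justification in the last step is wrong as stated: it is not true that $\Rad(N)=NJ(R)$ for every finitely generated module $N$ (e.g. $N=\Z/4\Z$ over $R=\Z$ has $\Rad(N)=2\Z/4\Z$ while $NJ(R)=0$), so Nakayama's lemma in that form does not apply. The conclusion survives, since a nonzero finitely generated module always has a maximal submodule, so $\Rad(h(E))=h(E)$ with $h(E)$ cyclic already forces $h(E)=0$; alternatively, argue as the paper does that $f(E)\subseteq\Rad(R/I)\ll R/I$ is a superfluous direct summand and hence zero.
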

\begin{proof}
 $(1)\Rightarrow (2)$ Clear.

 $(2)\Rightarrow (3)$ Assume that $\Rad(E)\neq E$. Then $E$ has a simple factor module isomorphic to $R/I$. Let $ f:E\rightarrow R/I$ be a nonzero homomorphism. Since $E$ is max-projective, there exists a homomorphism $g:E\rightarrow R$ such that $\Image(g) \neq 0$. By the fact that $R$ is right Hereditary, $\Image(g)$ is projective, whence $E\cong \Image(g)\oplus K$ for some right $R$-module $K$. Since $E$ is indecomposable, either $K = 0$ or $\Image(g) = 0$, where the latter case implies that $g=0$ which is a contradiction. In the former case $K = 0$, implying that $E$ is projective.\\
$(3)\Rightarrow (1)$ Conversely, if $E$ is projective then $E$ is clearly $R$-projective. Now suppose $\Rad(E)=E$ and let $f:E\rightarrow R/I$ be a homomorphism. Then $f(E)=f(\Rad(E)) \subseteq \Rad(R/I) \ll R/I$. Moreover $f(E)$ is a direct summand of $R/I$ since $R$ is right Hereditary. Therefore $f(E)=0$, and so $f$ can be lifted to $R$.
\end{proof}
\begin{lemma}\label{small} (See \cite[3.3]{smallring}) For a ring $R$ the following are equivalent.
\begin{enumerate}
\item $R$ is a right small ring.
\item $\Rad(E)= E$ for every injective right $R$-module $E$.
\item $\Rad(E(R))= E(R)$.
\end{enumerate}
\end{lemma}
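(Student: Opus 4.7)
The plan is to prove the cyclic implications $(2) \Rightarrow (3) \Rightarrow (1) \Rightarrow (2)$, treating $R$ being right small as the condition $R_R \ll E(R_R)$.

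The implication $(2) \Rightarrow (3)$ is immediate from the definition, since $E(R)$ is injective. For $(3) \Rightarrow (1)$, I would argue by contradiction: suppose $R + K = E(R)$ with $K$ a proper submodule of $E(R)$. The composition $R \hookrightarrow E(R) \twoheadrightarrow E(R)/K$ is then surjective, so $E(R)/K$ is a nonzero cyclic right $R$-module; in particular it admits a maximal submodule, so $\Rad(E(R)/K) \neq E(R)/K$. On the other hand, the radical is preserved under quotients in the sense that $(\Rad(E(R)) + K)/K \subseteq \Rad(E(R)/K)$, and hypothesis (3) forces the left-hand side to be all of $E(R)/K$, a contradiction. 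Hence no such proper $K$ exists and $R \ll E(R)$.

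The main work is $(1) \Rightarrow (2)$. Let $E$ be any injective right $R$-module and let $x \in E$. Define $\psi : R \to E$ by $\psi(r) = xr$; by injectivity of $E$ this extends along the inclusion $R \hookrightarrow E(R)$ to some $\phi : E(R) \to E$ with $\phi(R) = xR$. The key claim is that $xR \ll E$, which would give $xR \subseteq \Rad(E)$, hence $x \in \Rad(E)$, and since $x \in E$ was arbitrary, $E = \Rad(E)$. To verify the claim, suppose $xR + K = E$ for some submodule $K \subseteq E$, and set $F = \phi(E(R))$. By the modular law, since $xR \subseteq F$, we have $xR + (K \cap F) = E \cap F = F$. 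Applying $\phi^{-1}$ gives $R + \phi^{-1}(K \cap F) = E(R)$, and since $R \ll E(R)$ by hypothesis (1), we conclude $\phi^{-1}(K \cap F) = E(R)$, so $F \subseteq K$. Then $K = xR + K \supseteq F + K = K$, but also $K \supseteq xR$, so $K \supseteq xR + K = E$, giving $K = E$. This proves $xR \ll E$.

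The obstacle worth flagging is the $(1) \Rightarrow (2)$ step, which at first glance looks as if it might need a global statement like ``$\phi$ carries small submodules to small submodules'' (which is false in general when $\phi$ is not surjective). The trick above is to cut down by the image $F = \phi(E(R))$ so that one can pull the relation $xR + (K \cap F) = F$ back through $\phi$ without losing information; everything else is bookkeeping with the modular law and the injective extension property.
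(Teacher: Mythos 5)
Your proof is correct. Note that the paper itself gives no argument for this lemma --- it is quoted from Ramamurthi's paper \cite[3.3]{smallring} --- so there is no internal proof to compare against; your write-up supplies a complete, self-contained verification. The cycle $(2)\Rightarrow(3)\Rightarrow(1)\Rightarrow(2)$ is sound: the $(3)\Rightarrow(1)$ step correctly combines the existence of maximal submodules in nonzero cyclic modules with the containment $(\Rad(E(R))+K)/K\subseteq\Rad(E(R)/K)$, and the $(1)\Rightarrow(2)$ step correctly shows $xR\ll E$ for every $x\in E$, whence $E=\Rad(E)$ because $\Rad(E)$ contains every small submodule. One remark on the ``obstacle'' you flag: the statement that a homomorphism carries small submodules to small submodules is in fact true for \emph{arbitrary} (not necessarily surjective) homomorphisms --- if $K\ll M$ and $f:M\to N$, then $f(K)\ll N$; this is \cite[Lemma 5.18]{Anderson-Fuller:RingsandCategoriesofModules}. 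Your detour through $F=\phi(E(R))$ and the modular law is essentially a re-derivation of that standard fact in the case at hand, so while nothing is wrong, the step $(1)\Rightarrow(2)$ could be shortened to: $R\ll E(R)$ implies $xR=\phi(R)\ll E$ for the extension $\phi$ of $r\mapsto xr$, hence $xR\subseteq\Rad(E)$ for all $x\in E$.
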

\begin{corollary}
If $R$ is a right Hereditary right small ring, then $R$ is right almost-$QF$.
\end{corollary}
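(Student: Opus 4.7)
The plan is to adapt the argument from the proof of Proposition \ref{injind}(3)$\Rightarrow$(1), dropping the indecomposability hypothesis, since over a right small ring every injective module automatically satisfies $\Rad(E) = E$ by Lemma \ref{small}. So I want to show directly that for any injective right $R$-module $E$ and any homomorphism $f : E \to R/I$, with $I$ an arbitrary right ideal of $R$, the image $f(E)$ is zero, from which it follows trivially that $f$ lifts to $R$, giving $R$-projectivity of $E$.

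First I would note that since $R$ is right Hereditary, every factor module of an injective is injective; in particular $f(E) \cong E/\Ker(f)$ is injective. Because $f(E)$ is an injective submodule of $R/I$, it is a direct summand, so write $R/I = f(E) \oplus L$ for some submodule $L$. Next, since $R$ is right small, Lemma \ref{small} gives $\Rad(E) = E$, hence $f(E) = f(\Rad(E)) \subseteq \Rad(R/I)$. Now $R/I$ is cyclic, hence finitely generated, and for any finitely generated module the radical is superfluous, so $\Rad(R/I) \ll R/I$. Combining these, $f(E)$ is simultaneously a direct summand of $R/I$ and a superfluous submodule of $R/I$: writing $R/I = f(E) \oplus L$ and using $f(E) + L = R/I$ together with $f(E) \ll R/I$ forces $L = R/I$, hence $f(E) = 0$.

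Therefore $f = 0$ and it obviously lifts through $\pi : R \to R/I$. Since $I$ was an arbitrary right ideal of $R$, this shows $E$ is $R$-projective. As $E$ was an arbitrary injective right $R$-module, $R$ is right almost-$QF$.

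The only place requiring care is the step that a cyclic (finitely generated) module has a small radical, but this is the standard argument: if $\Rad(R/I) + N/I = R/I$ for some proper $N/I$, then $N/I$ extends to a maximal submodule of $R/I$ (which exists by finite generation), and this maximal submodule contains $\Rad(R/I)$, contradiction. Everything else (image of injective is injective over a Hereditary ring, and injective submodules split off) is classical, so there is no real obstacle; the corollary is essentially a packaging of Proposition \ref{injind} together with Lemma \ref{small}.
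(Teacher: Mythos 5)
Your proof is correct and follows essentially the same route the paper intends: the corollary is stated without proof precisely because it combines Lemma \ref{small} (over a right small ring every injective $E$ satisfies $\Rad(E)=E$) with the argument already given in Proposition \ref{injind}$(3)\Rightarrow(1)$, where — as you correctly observe — the case $\Rad(E)=E$ never uses indecomposability. Your added justifications (quotients of injectives are injective over a hereditary ring, $\Rad(R/I)\ll R/I$ for cyclic modules, summand plus superfluous forces zero) are all accurate.
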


\begin{proposition}
If $R$ is a right semihereditary right small ring, then $\Hom(E,R)=0$, for any injective right $R$-module $E$. In particular, $R$ is right almost-$QF$ if and only if $\Hom(E,R/I)=0$ for any right ideal $I$ of $R$.
\end{proposition}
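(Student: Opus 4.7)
The plan is to prove $\Hom(E,R)=0$ for every injective $E$ first, and then to deduce the biconditional as a formal consequence. Let $f\colon E\to R$ be given with $E$ injective. Since $R$ is right small, Lemma~\ref{small} gives $\Rad(E)=E$; combining with the standard functoriality of the radical ($f(\Rad(M))\subseteq\Rad(N)$ for any $R$-homomorphism $f\colon M\to N$) yields
\[
\Image(f)\;=\;f(\Rad(E))\;\subseteq\;\Rad(R)\;=\;J(R).
\]
So the image of every such $f$ already lies in the Jacobson radical, and the task reduces to showing this image vanishes.

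To force the image to be zero I would combine injectivity of $E$ with right semihereditariness. First, injectivity of $E$ makes $E$ divisible by every right regular element $r\in R$ (i.e., $r$ with $\operatorname{ann}_{r}(r)=0$): the assignment $rs\mapsto es$ defines a well-posed right $R$-homomorphism $rR\to E$, which extends by injectivity of $E$ to $R\to E$; evaluating the extension at $r$ produces $e'\in E$ with $e=e'r$, so $E=Er$. Applying $f$, one gets $\Image(f)=f(Er)=\Image(f)\cdot r\subseteq Rr$ for every right regular $r$, hence $\Image(f)\subseteq \bigcap_{r\text{ right regular}}Rr$. Second, right semihereditariness makes each cyclic submodule $f(x)R\subseteq R$ a projective right ideal, so the epimorphism $R\twoheadrightarrow f(x)R$, $r\mapsto f(x)r$, splits, yielding $R=\operatorname{ann}(f(x))\oplus eR$ for an idempotent $e\in R$ with $f(x)=f(x)e$ and $eR\cong f(x)R$. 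The main obstacle is to combine these two ingredients with the smallness $R\ll E(R)$ to conclude $f(x)=0$: equivalently, either to show directly that $\bigcap Rr=0$ under the combined hypotheses, or to force the idempotent $e$ into $J(R)$ via the isomorphism $eR\cong f(x)R\subseteq J(R)$ (so that $e=0$ because idempotents in the Jacobson radical vanish, and then $f(x)=f(x)e=0$). I expect this closing step to use smallness of $R$ inside its injective hull together with the direct-summand structure produced by semihereditariness; this is the technical heart of the proof.

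The biconditional is then formal. If $\Hom(E,R/I)=0$ for every right ideal $I$ and every injective $E$, then each map $E\to R/I$ is zero and hence trivially lifted (by the zero map) through the canonical projection $\pi\colon R\to R/I$, so every injective $E$ is $R$-projective and $R$ is right almost-$QF$. Conversely, if $R$ is right almost-$QF$, then any homomorphism $f\colon E\to R/I$ lifts to some $g\colon E\to R$ by $R$-projectivity of $E$; the first assertion gives $g=0$, whence $f=\pi g=0$, as required.
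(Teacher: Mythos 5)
Your opening reduction is correct and is exactly the paper's first step: $f(E)=f(\Rad(E))\subseteq J(R)$ by Lemma~\ref{small}. The closing biconditional is also fine (and is all the paper means by ``the rest is clear''). But the technical heart that you explicitly leave open --- forcing $f(E)=0$ once it sits inside $J(R)$ --- is a genuine gap, and neither of your two sketched routes closes it. Route (a): divisibility by right regular elements only yields $\Image(f)\subseteq\bigcap Rr$ over \emph{regular} $r$; in a ring where every non-unit is a zero divisor this intersection is all of $R$, and nothing in your argument ties it to smallness or semihereditariness. Route (b): from projectivity of $f(x)R$ you get a splitting $s$ of $R\twoheadrightarrow f(x)R$ and hence $f(x)\bigl(1-s(f(x))\bigr)=0$ and a summand $eR\cong f(x)R$; but an isomorphism $eR\cong f(x)R$ with $f(x)R\subseteq J(R)$ does \emph{not} place $e$ (or $s(f(x))$) in $J(R)$ --- in $\Z_{(p)}$ one has $pR\cong R=1\cdot R$ with $pR\subseteq J(R)$ and $1\notin J(R)$ --- so you cannot conclude $e=0$ or that $1-s(f(x))$ is a unit.

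The paper closes the gap with a purity argument that uses semihereditariness globally rather than cyclic-by-cyclic: $E$ is injective, hence absolutely pure, and over a right semihereditary ring every homomorphic image of an absolutely pure module is absolutely pure; therefore $f(E)$ is an absolutely pure, hence pure, right ideal of $R$, so $R/f(E)$ is flat (\cite[Corollary 4.86]{Lam:LecturesOnModulesAndRings}). The flatness criterion for cyclic modules then gives, for each $a\in f(E)$, an element $b\in f(E)\subseteq J(R)$ with $a=ab$, whence $a(1-b)=0$ with $1-b$ a unit and $a=0$ (\cite[\S4, Exercise 20]{Lam:LecturesOnModulesAndRings}). This is the ingredient your proposal is missing; your splitting map $s$ is close in spirit (it is a pointwise version of the flatness criterion), but without the purity of the \emph{whole} image $f(E)$ you have no control over where $s(f(x))$ lands, and the argument does not terminate.
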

\begin{proof}
Let $E$ be an injective right $R$-module and $f\in \Hom(E,R)$. Then $f(E)=f(\Rad(E))\subseteq J(R)$. Since $R$ is right semihereditary, $f(E)$ is absolutely pure. This means that $R/f(E)$ is flat by \cite[Corollary 4.86]{Lam:LecturesOnModulesAndRings}. Then, by \cite[\S4 Exercise 20]{Lam:LecturesOnModulesAndRings}, $f(E)=0$, i.e. $\Hom(E,R)=0$. Hence, the rest is clear.
\end{proof}
Recall that by Example \ref{example}$(d)$, any  right small ring  $R$  is right max-$QF$. Moreover, if $R$ is right Noetherian, we have the following.
\begin{proposition} \label{smallalmostQF}
If $R$ is a right Noetherian and right small ring, then $R$ is right almost-$QF$.
\end{proposition}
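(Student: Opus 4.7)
The plan is to establish the stronger statement that $\Hom(E, R/I) = 0$ for every injective right $R$-module $E$ and every right ideal $I$ of $R$, which immediately implies $R$-projectivity of $E$ via the zero lift, giving right almost-$QF$ness at once.

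First I would apply Lemma \ref{small}: since $R$ is right small, $\Rad(E) = E$ for every injective right $R$-module $E$. Then, given any $f \in \Hom(E, R/I)$, I would use the general functoriality of the radical, namely $f(\Rad(E)) \subseteq \Rad(f(E))$, which follows from the fact that the preimage under $f$ of a maximal submodule of $f(E)$ is a maximal submodule of $E$. Combined with $\Rad(E) = E$, this gives $f(E) = f(\Rad(E)) \subseteq \Rad(f(E))$, and hence $f(E) = \Rad(f(E))$.

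Next I would invoke the right Noetherian hypothesis: $R/I$ is a Noetherian right $R$-module, so its submodule $f(E)$ is finitely generated. A standard Zorn argument shows that every nonzero finitely generated module admits a maximal submodule, and therefore $\Rad(N) \neq N$ whenever $N$ is finitely generated and nonzero. Applied to $N = f(E)$, the equality $f(E) = \Rad(f(E))$ forces $f(E) = 0$, so $f$ is the zero homomorphism and trivially factors through $\pi \colon R \to R/I$.

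There is no serious obstacle; the argument is a short chain that combines Lemma \ref{small} (to get $\Rad(E) = E$), the functoriality of the radical under homomorphisms, and the Noetherian-driven finite generation of $f(E)$ together with the elementary observation that a finitely generated module cannot coincide with its own radical unless it is zero. In particular, the proof does not need to decompose $E$ into indecomposable injectives, nor does it need any hereditary hypothesis of the type used in Proposition \ref{injind}.
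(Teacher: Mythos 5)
Your proposal is correct and follows essentially the same route as the paper's own proof: use Lemma \ref{small} to get $\Rad(E)=E$, push this forward along $f$ to conclude $\Rad(f(E))=f(E)$, and then use the Noetherian hypothesis to see that the finitely generated module $f(E)$ must be zero. The only difference is that you spell out the functoriality of the radical explicitly, which the paper leaves implicit.
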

\begin{proof}
Let $E$ be an injective right $R$-module. Then, by Lemma \ref{small}, $\Rad(E)=E$. Now let $f:E\rightarrow R/I$ be a homomorphism for any right ideal $I$ of $R$. This implies that $f(E)\subseteq R/I$ and since $\Rad(E)=E$, we have $\Rad(f(E))=f(E)$. By the right Noetherian assumption, $R/I$ is a Noetherian right $R$-module and its submodule $f(E)$ is finitely generated, i.e. $\Rad(f(E))\neq f(E)$. Also since $\Rad(f(E))=f(E)$, this means that $f(E)=0$, whence $f:E\rightarrow R/I$ can be lifted to $R$. Consequently, $E$ is $R$-projective.
\end{proof}

\begin{theorem} Let $R$ be a right Hereditary and right Noetherian ring. Then the following are equivalent.
\begin{enumerate}
\item $R$ is right almost-$QF$.
\item $R$ is right  max-$QF$.
\item Every injective right $R$-module $E$ has a decomposition $E=A\oplus B$ where $\Rad(A)=A$ and $B$ is projective and semisimple.
\item $R=S\times T$, where $S$ is a semisimple Artinian ring and $T$ is a right small ring.
\end{enumerate}
\end{theorem}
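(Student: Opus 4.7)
The plan is to prove the equivalences via the cycle $(1)\Rightarrow(2)\Rightarrow(3)\Rightarrow(1)$, supplemented by $(3)\Rightarrow(4)\Rightarrow(1)$. The implication $(1)\Rightarrow(2)$ is immediate from the definitions, and $(4)\Rightarrow(1)$ follows from Lemma \ref{product}: a direct ring factor of a right Noetherian ring is right Noetherian, so Proposition \ref{smallalmostQF} applies to $T$, while $S$ is $QF$ being semisimple Artinian.

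For $(2)\Rightarrow(3)$, right Noetherianness decomposes an injective $E$ as $\bigoplus_i E_i$ with each $E_i$ indecomposable injective; each is max-projective by Lemma \ref{directsumofmaxprojectives}, so by Proposition \ref{injind} either $\Rad(E_i)=E_i$ or $E_i$ is projective. In the projective case $E_i$ must be simple: the argument of Proposition \ref{injind} embeds $E_i\hookrightarrow R$ (the image of a nonzero map $E_i\to R$ is projective by hereditary-ness, splitting the kernel, which vanishes by indecomposability), so right Noetherianness makes $E_i$ finitely generated. Choosing a maximal submodule $M$ of $E_i$, the simple quotient $E_i/M$ is injective (quotient of injective over hereditary), splitting $0\to M\to E_i\to E_i/M\to 0$, and $M=0$ by indecomposability. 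Sorting the two kinds of summands yields $(3)$. For $(3)\Rightarrow(1)$, any $f:E=A\oplus B\to R/I$ ($I$ a right ideal) has $f(A)$ finitely generated in the Noetherian module $R/I$ with $\Rad(f(A))=f(A)$ (since $A=\Rad(A)$), forcing $f(A)=0$; then $f|_B$ lifts by projectivity of $B$.

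The main obstacle is $(3)\Rightarrow(4)$. Apply $(3)$ to $E(R_R)=A\oplus B$. As $1\in R$ has finitely many nonzero components in the indecomposable decomposition of $E(R)$, $B$ is a finite direct sum of simple injective modules; essentiality of $R$ in $E(R)$ forces every simple summand of $B$ into $R$, so $B\subseteq R$. Moreover every simple injective right ideal $J$ of $R$ lies in $B$, because its projection to $A$ would be a simple direct summand of $A$, contradicting $\Rad(A)=A$. Hence for $r\in R$ and simple injective right ideal $I\subseteq B$, the left multiplication $rI$ is $0$ or a simple injective right ideal, still in $B$, so $B$ is a two-sided ideal. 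Being a finite direct sum of injectives, $B$ is a direct summand of $R_R$, so $B=eR=ReR$ for an idempotent $e\in B$. The crux is showing $e$ is central. Suppose not: $Re\subseteq eR$ yields $ere=re$ by a short computation, and $eR(1-e)=\{er-re:r\in R\}$ is then a nonzero subset of the right $R$-submodule $\{x\in B:xe=0\}$ of $B$, which therefore contains a simple submodule $S$. As $S$ is a simple direct summand of $R_R$, $S=fR$ for an idempotent $f\in S$; and $Se=0$ puts $e\in\operatorname{ann}(S)$, whence $B=ReR\subseteq\operatorname{ann}(S)$ gives $SB=0$. Since $S\subseteq B$, $S^2=0$, forcing $f=f^2=0$ and $S=0$, a contradiction. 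Thus $e$ is central and $R=eR\times(1-e)R$; $eR$ is semisimple Artinian, while centrality implies $AB=0$ so that $A=E(T_T)$ as right $T$-module ($T=(1-e)R$), and $\Rad(A)=A$ translates via Lemma \ref{small} to $T$ being right small.
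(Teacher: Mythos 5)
Your overall architecture ($(1)\Rightarrow(2)\Rightarrow(3)\Rightarrow(1)$ plus $(3)\Rightarrow(4)\Rightarrow(1)$) is sound, but there is one genuine gap, in $(2)\Rightarrow(3)$: to show that a projective indecomposable summand $E_i$ is simple, you split $0\to M\to E_i\to E_i/M\to 0$ on the grounds that the quotient $E_i/M$ is injective. Injectivity of a quotient does not split a short exact sequence; that requires injectivity of the submodule or projectivity of the quotient. The step is actually false without reinvoking hypothesis $(2)$ here: over the upper triangular $2\times 2$ matrix ring over a field (which is hereditary Artinian but not right max-$QF$), $e_{11}R$ is an indecomposable, injective, projective, finitely generated right module whose unique maximal submodule has a simple \emph{injective} quotient, yet the sequence does not split and $e_{11}R$ is not simple. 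The repair is exactly the move the paper makes at this point: $E_i/M$ is simple and injective, hence max-projective by $(2)$, hence projective by Example \ref{example}(c), and projectivity of the quotient splits the sequence, after which indecomposability gives $M=0$.

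Everything else checks out, and your treatment of $(4)$ takes a genuinely different route. The paper proves $(2)\Rightarrow(4)$: it lets $S$ be the sum of the minimal injective right ideals, verifies $\Hom(S,T)=\Hom(T,S)=0$ by hand to get a ring decomposition $R=S\oplus T$, and then uses max-projectivity once more to force $\Rad(E(T))=E(T)$. You instead prove $(3)\Rightarrow(4)$ by applying the decomposition to $E(R_R)$ itself, showing $B$ is a finite semisimple injective two-sided direct summand $eR=ReR$ of $R$, and establishing centrality of $e$ via the $SB=0$, $S^2=0$, $f=f^2=0$ argument; $\Rad(A)=A$ then transfers to $T=(1-e)R$ being right small via Lemma \ref{small}. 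Your version is self-contained at the level of idempotents and reuses $(3)$ rather than returning to the max-projectivity hypothesis; the paper's version avoids the centrality computation at the cost of two Hom-vanishing checks. Both are valid, as is your more direct radical argument for $(3)\Rightarrow(1)$ (the paper instead reapplies Proposition \ref{injind} to the summands of $A$), and your $(4)\Rightarrow(1)$ agrees with the paper's appeal to Lemma \ref{product} and Proposition \ref{smallalmostQF}.
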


\begin{proof}
$(1)\Rightarrow (2)$ Clear.\\
$(2)\Rightarrow (3)$ Let $E$ be an injective right $R$-module. Then $E$ has an indecomposable decomposition $E=\oplus_{i \in \Gamma} A_{i}$ where $A_{i}$'s are either projective or $\Rad(A_{i})=A_{i}$ by proposition \ref{injind}. Let $\Lambda=\{j\in \Gamma : A_{j} $\ is projective\}. So the decomposition of $E$ can be written as $E=(\oplus_{j\in \Lambda} A_{j}) \oplus (\oplus_{i\in \Gamma-\Lambda} A_{i})$. We claim that each $A_{j}$ is simple for $j\in \Lambda$. Since $A_{j}$ is projective for $j\in \Lambda$, $\Rad(A_{j})\neq A_{j}$. So there exists a simple factor $B_{j}$ of $A_{j}$ i.e. $B_{j}\cong A_{j}/N\cong R/I$ for some maximal submodule $N$ of $A_{j}$ and for some maximal right ideal $I$ of $R$. Since $B_{j}$ is injective, by $(2)$, the following diagram commutes.
\[
\xymatrix{
  & B_{j} \ar@{.>}[dl]_{g} \ar[d]^{f} \\
R \ar@{->>}[r]^h & R/I \ar[r] & 0
}
\]
With the Hereditary assumption on $R$, $\Image(g)\cong B_{j}$ is projective and so $A_{j}\cong N\oplus B_{j}$. However $A_{j}$ is indecomposable, whence $N=0$. Consequently, each $A_{j}$ is simple for $j\in \Lambda$.

$(3)\Rightarrow (1)$ Let $E$ be an injective right $R$-module. By the assumption, $E=A\oplus B$ where $\Rad(A)=A$ and $B$ is semisimple and projective. Since $B$ is $R$-projective, we only need to show that $A$ is $R$-projective. By the Noetherian assumption, the injective $R$-module $A$ has a decomposition  $A=\oplus _{i \in \Gamma} A_{i}$ where each $A_{i}$ is indecomposable injective with $\Rad(A_{i})=A_{i}$. Proposition \ref{injind} implies that each $A_{i}$ is $R$-projective, whence $A$ is $R$-projective by Lemma \ref{directsumofmaxprojectives}. Therefore, $M=A\oplus B$ is $R$-projective by Lemma \ref{directsumofmaxprojectives}.

$(2)\Rightarrow (4)$ Let $S$ be the sum of minimal injective right ideals of $R$. Then $S$ is injective since $R$ is right Noetherian. Thus we have the decomposition $R=S\oplus T$ for some right ideal $T$ of $R$ such that $\Soc(S)=S$ and $T$ has no simple injective submodule.
If $f:S\rightarrow T$ is a nonzero homomorphism, then $f(\Soc(S))= f(S) \subseteq \Soc(T)$, where $f(S)$ is injective by the Hereditary assumption, and so $\Soc(T)$ contains a semisimple injective direct summand $f(S)$. This means that $f(S)=0$, a contradiction. Thus, we have $\Hom(S,T)=0$ , and so $S$ is a two sided ideal. On the other hand, if $g:T\rightarrow S$ is a nonzero homomorphism, then $T/\Ker(g)\cong \Image(g) \subseteq S$, and so $ T/\Ker(g)$ is projective by Hereditary assumption. Also since $S$ is a semisimple injective $R$-module, $ T/\Ker(g)$ is semisimple injective, whence $ K/\Ker(g)$ is semisimple injective for any maximal submodule $ K/\Ker(g)$ of $ T/\Ker(g)$. This implies that $T/\Ker(g)\cong K/\Ker(g) \oplus T/K$. Then the simple $R$-module $T/K$ is injective and projective, and so $T$ contains an isomorphic copy of a simple injective $R$-module $T/K$, yielding a contradiction. Therefore, $\Hom(T,S)=0$, and so $T$ is a two sided ideal. Consequently, $R=S\oplus T$ is a ring decomposition. Now let $E(T)$ be the injective hull of $T$ as an $R$-module. The injective hull $E(T)$ is also a $T$-module by the fact that $E(T)S=0$. We claim that $\Rad(E(T))=E(T)$. Suppose the contrary and let $K$ be a maximal submodule of $E(T)$. Then $E(T)/K$ is injective by the Hereditary assumption and it is max-projective by (2). Since $E(T)/K$ is a simple right $R$-module, it is isomorphic to $R/I$ for some maximal right ideal $I$ of $R$, and so $R/I$ is injective. Then, the isomorphism $\alpha: E(T)/K \rightarrow R/I$ lifts to $\beta:E(T)/K \rightarrow R$ i.e. the following diagram commutes.
\[
\xymatrix{
  & E(T)/K \ar@{.>}[dl]_{\beta} \ar[d]^{\alpha} \\
R \ar[r]^h & R/I \ar[r] & 0
}
\] Since $\beta$ is monic and $E(T)/K$ injective, $U=\beta(E(T)/K)$ is a direct summand of $R$. It is easy to see that $U$ is also a right $T$-module and so $U \subseteq  T$. On the other hand, since $U$ is minimal and injective, $U$ is also contained in $S$, a contradiction. So we must have $\Rad(E(T))=E(T)$, whence $T \ll E(T)$ by Lemma \ref{small}. This proves (4).

$(4)\Rightarrow (1)$ Clear, by Lemma \ref{product} and Proposition \ref{smallalmostQF}.

\end{proof}

\begin{theorem}
Let $R$ be a right Hereditary ring. Then the following are equivalent.
\begin{enumerate}
\item $R$ is right max-$QF$.
\item Every simple injective right $R$-module is projective.
\item Every singular injective right $R$-modules is $R$-projective.
\item Every singular injective right $R$-modules is max-projective.
\item $\Rad(E)=E$ for every singular injective right $R$-module $E$.
\item Every injective right $R$-module $E$ can be decomposed as $E=Z(E)\oplus F$ with $\Rad(Z(E))=Z(E)$.
\end{enumerate}
\end{theorem}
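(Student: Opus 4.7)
My plan is to establish the equivalences via the cycle $(1) \Rightarrow (4) \Rightarrow (2) \Rightarrow (5) \Rightarrow (6) \Rightarrow (1)$ together with the branch $(5) \Rightarrow (3) \Rightarrow (4)$. The implications $(1) \Rightarrow (4)$ and $(3) \Rightarrow (4)$ are immediate, since singular injectives are injective and $R$-projectivity implies max-projectivity. For $(4) \Rightarrow (2)$, a simple injective $S$ is either non-singular, in which case writing $S \cong R/I$ with $I$ maximal and non-essential yields $R = I \oplus J$ (using maximality to promote non-essentiality to a direct sum decomposition) and hence $S$ projective; or $S$ is singular, in which case $(4)$ combined with Example \ref{example}(c) gives projectivity. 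For $(2) \Rightarrow (5)$, if $E$ is singular injective with $\Rad(E) \neq E$, then a simple quotient $E/K$ is injective by the right Hereditary hypothesis and singular as a quotient of $E$, so projective by $(2)$; but projective simple modules are non-singular, a contradiction.

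The key implication $(5) \Rightarrow (3)$ hinges on the Hereditary hypothesis. For $E$ singular injective, $(5)$ gives $\Rad(E) = E$; then for any right ideal $I$ and homomorphism $f: E \to R/I$, the image $f(E)$ is injective (being a quotient of the injective $E$, since $R$ is right Hereditary) and contained in $\Rad(R/I)$ (from $f(\Rad(E)) \subseteq \Rad(R/I)$). Because $R/I$ is cyclic, $\Rad(R/I)$ is superfluous in $R/I$; an injective submodule of $R/I$ is a direct summand, and a direct summand contained in a superfluous submodule must vanish. Thus $f = 0$ lifts trivially, proving $E$ is $R$-projective.

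For $(5) \Rightarrow (6)$, given an injective $E$, I would invoke the classical result that over a right Hereditary ring the class of singular modules is closed under essential extensions. This forces $E(Z(E)) \subseteq Z(E)$, so $Z(E)$ equals its injective hull and hence is a direct summand of $E$; the complement $F$ is nonsingular (as $Z(F) \subseteq Z(E) \cap F = 0$), and $\Rad(Z(E)) = Z(E)$ by $(5)$ applied to the singular injective $Z(E)$. For $(6) \Rightarrow (1)$, given $E = Z(E) \oplus F$ and $f: E \to R/I$ with $I$ maximal, the restriction $f|_{Z(E)}$ has image in $\Rad(R/I) = 0$ since $R/I$ is simple. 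For $f|_F$, either $f(F) = 0$ or $f(F) = R/I$; the latter would present $R/I$ as a simple injective quotient of $F$ (injective by the Hereditary assumption), and applying $(6)$ to $R/I$ would force $\Rad(R/I) = R/I$, contradicting simplicity. Hence $f$ lifts, so $E$ is max-projective. The main obstacle lies in $(5) \Rightarrow (6)$: justifying that $Z(E)$ is injective requires the nontrivial fact that injective hulls of singular modules are singular over right Hereditary rings, and this is where the argument demands the most care.
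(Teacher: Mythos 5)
Most of your cycle is sound and in places diverges usefully from the paper: your $(2)\Rightarrow(5)$ (a singular injective with a maximal submodule would have a simple quotient that is singular, injective by heredity, hence projective by $(2)$, hence nonsingular --- a contradiction) replaces the paper's direct $(4)\Rightarrow(5)$, while your $(5)\Rightarrow(3)$, $(5)\Rightarrow(6)$ and $(4)\Rightarrow(2)$ are essentially the paper's arguments. The point you flag as delicate in $(5)\Rightarrow(6)$ is just the standard fact that a right hereditary ring is right nonsingular, so $Z(E)$ is closed in $E$ and therefore a summand of the injective $E$; no extra care is needed.

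The genuine problem is $(6)\Rightarrow(1)$. You claim $f(F)=R/I$ is impossible because ``applying $(6)$ to $R/I$ would force $\Rad(R/I)=R/I$.'' It would not: $(6)$ applied to the injective module $R/I$ gives $R/I=Z(R/I)\oplus F'$ with $\Rad(Z(R/I))=Z(R/I)$, and this forces $\Rad(R/I)=R/I$ only when $Z(R/I)=R/I$, i.e.\ only when $R/I$ is singular. Nothing here makes $R/I$ singular: $F$ is nonsingular and quotients of nonsingular modules need not be singular. Your resulting conclusion that $f=0$ always is false in general --- over a semisimple ring, $(6)$ holds with $Z(E)=0$, yet there are plenty of nonzero maps from injectives onto simples. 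The repair is the missing case split: if $R/I$ is nonsingular then $I$ is not essential, so maximality gives $R=I\oplus J$, $R/I$ is projective, $\pi:R\to R/I$ splits and $f$ lifts through the splitting; only when $R/I$ is singular does your contradiction apply. (The paper sidesteps this by proving only the trivial $(6)\Rightarrow(5)$ and closing the cycle with $(2)\Rightarrow(1)$: a nonzero $f:E\to S$ is onto, so $S$ is injective by heredity, projective by $(2)$, and $f$ lifts via a splitting of $R\to S$.)
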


\begin{proof}
$(1)\Rightarrow (4)$, $(3)\Rightarrow (4)$ and $(6)\Rightarrow (5)$ are clear.

$(4)\Rightarrow (2)$ Let $S$ be a simple injective right $R$-module. We claim that $S$ is projective. Assume that $S$ is not projective. Then it is singular and injective. This implies, by our hypothesis that $S$ is max-projective, hence $S$ is projective, this is a contradiction. The conclusion now follows.

$(2)\Rightarrow (1)$ Let $E$ be an injective right $R$-module and $f:E\rightarrow S$ with $S$ is a simple right $R$-module. If $f=0$, there is nothing to prove. We may assume that $f$ is a nonzero homomorphism, and so $f$ is an epimorphism. Since $R$ is right Hereditary, $S$ is injective, and so by (2), $S$ is projective. Hence, the natural epimorphism $\pi:R\rightarrow S$ splits, i.e. there exists a homomorphism $\eta:S\rightarrow R$ such that $\pi\eta=1_{S}$. Then, $\pi\eta f=f$, and so $E$ is max-projective.

$(4)\Rightarrow (5)$ Let $E$ be a singular injective right $R$-module. Assume to the contrary that $E$ has a maximal submodule $K$ such that $E/K\cong R/I$ for some maximal right ideal $I$ of $R$. So, there is a nonzero homomorphism $f:E\rightarrow R/I$, and by (4), there exists a nonzero homomorphism $g:E\rightarrow R$ such that $\pi g=f$, where $\pi:R\rightarrow R/I$ is the canonical epimorphism. Since $E$ is singular, $\Image(g)$ is singular. Moreover, $\Image(g)\subseteq R$, and so $\Image(g)$ is nonsingular. This implies that $g(E)=0$, yielding a contradiction.

$(5)\Rightarrow (6)$ Let $E$ be an injective right $R$-module. Since $R$ is a right nonsingular ring, $Z(E)$ is a closed submodule of $E$, and so $E=Z(E)\oplus F$ for some submodule $F$ of $E$. Then, by $(5)$, $\Rad(Z(E))=Z(E)$.

$(5)\Rightarrow (3)$ Let $E$ be a singular injective right $R$-module. This implies, by our hypothesis, that $\Rad(E)=E$. Let $f:E\rightarrow R/I$ be homomorphism for some right ideal $I$ of $R$. Since $\Rad(E)=E$ and $\Rad(R/I)\neq R/I$, $f:E\rightarrow R/I$ is not an epimorphism. By the right Hereditary assumption, $f(E)$ is injective, and so $f(E)$ is a direct summand of $R/I$. But since $f(E)\subseteq \Rad(R/I)$, we must have $f(E)\ll R/I$. This means, $f(E)=0$, whence $\Hom(E,R/I)=0$ for each right ideal $I$ of $R$. Therefore, $E$ is $R$-projective.
\end{proof}

\begin{proposition} \label{localalmostQF}
Let $R$ be a local right max-$QF$ ring. Then $R$ is either right self-injective or right small.
\end{proposition}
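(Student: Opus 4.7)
The plan is to argue by contrapositive: assume $R$ is not right small and deduce that $R$ must be right self-injective. The key object to work with is the injective hull $E(R_R)$ of $R$, together with the max-projectivity hypothesis applied to it.

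First I would invoke Lemma \ref{small}: if $R$ is not right small, then $\Rad(E(R_R)) \neq E(R_R)$, so the injective hull of $R$ admits a maximal submodule. Since $R$ is local with unique maximal right ideal $J = J(R)$, the unique simple right $R$-module (up to isomorphism) is $R/J$, and therefore there exists an epimorphism $f : E(R_R) \to R/J$. Let $\pi : R \to R/J$ denote the canonical projection.

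Next, because $R$ is right max-$QF$, the injective module $E(R_R)$ is max-projective, so the map $f$ lifts through $\pi$: there exists $h : E(R_R) \to R$ with $\pi h = f$. From $\pi h = f$ being surjective we get $h(E(R_R)) + J = R$. Since $R$ is local, $J \ll R_R$, and hence $h(E(R_R)) = R$; that is, $h$ is an epimorphism onto the projective module $R$.

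Finally, projectivity of $R$ forces $h$ to split, yielding a decomposition $E(R_R) = \Ker(h) \oplus N$ with $N \cong R_R$. Thus $R_R$ is (isomorphic to) a direct summand of an injective module and is therefore itself injective, so $R$ is right self-injective. The only step requiring any care is ensuring that the lift $h$ is actually surjective, and this is exactly where locality of $R$ is used, via $J \ll R$; the rest of the argument is formal and relies only on Lemma \ref{small} and the definition of max-projectivity.
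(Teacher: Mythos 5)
Your proposal is correct and follows essentially the same route as the paper: both take the injective hull $E(R_R)$, use $\Rad(E)\neq E$ to produce an epimorphism onto the unique simple module $R/J$, lift it via max-projectivity to a map $h:E\to R$, conclude $h$ is onto because $J\ll R$ in a local ring, and then split $h$ to realize $R_R$ as a summand of an injective module. Your write-up is in fact slightly more explicit than the paper's at the step showing the lift is surjective.
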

\begin{proof}
Let $J$ be the unique maximal right ideal of $R$ and $E$ be the injective hull of the ring $R$. Assume first that $R$ is not a small ring i.e. $\Rad(E) \neq E$. Then $E$ has a maximal submodule $K$ such that $\frac{E}{K}$ is isomorphic to $\frac{R}{J}$ and denote this isomorphism by $f$. Consider the composition $f\pi$ where $\pi:E \rightarrow \frac{E}{K}$ is the canonical projection. Since $R$ is right max-$QF$, there is a nonzero homomorphism $g:E \rightarrow R$ such that
\[
\xymatrix{
  & E \ar@{.>}[dl]_{g} \ar[d]^{f\pi} &\\
R \ar[r]^h & \frac{R}{J} \ar[r] & 0
}
\] commutes.
Furthermore, $h$ is a small epimorphism and $f \pi$ is an epimorphism, which means $g:E \rightarrow R$ is also an epimorphism and splits. Thus, $E \cong R \oplus T$ for some $T$. Hence, $R$ is a right self injective ring.
\end{proof}

\begin{corollary}
Let $R$ be a commutative semiperfect ring. If $R$ is max-$QF$, then $R=S\times T$ where $S$ is self-injective and $T$ is small.
\end{corollary}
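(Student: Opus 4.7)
The plan is to reduce to the local case by using the standard block decomposition of a commutative semiperfect ring, apply Proposition \ref{localalmostQF} to each block, and then recombine the self-injective blocks and the small blocks.

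First, since $R$ is commutative and semiperfect, the identity decomposes as a sum of (finitely many) orthogonal primitive idempotents, giving a ring decomposition
\[
R=R_{1}\times R_{2}\times\cdots\times R_{n},
\]
where each $R_{i}$ is a commutative local ring. By Lemma \ref{product}, each factor $R_{i}$ is max-$QF$, so Proposition \ref{localalmostQF} (noting that for commutative rings left and right coincide) tells us that each $R_{i}$ is either self-injective or small. Partition $\{1,\dots,n\}=I\sqcup J$ so that $R_{i}$ is self-injective for $i\in I$ and $R_{i}$ is small for $i\in J$, and set
\[
S=\prod_{i\in I}R_{i},\qquad T=\prod_{j\in J}R_{j}.
\]

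Next I would verify the two closure properties. A finite direct product of self-injective rings is self-injective, since the injective hull of $S$ as an $S$-module decomposes componentwise and each component $R_{i}$ is already its own injective hull. For smallness, I will use the characterization in Lemma \ref{small}: $T$ is small iff $\Rad(E)=E$ for every injective $T$-module $E$. Any injective $T$-module $E$ decomposes as $E=\bigoplus_{j\in J}Ee_{j}$ where $Ee_{j}$ is an injective $R_{j}$-module, so $\Rad(E)=\bigoplus_{j\in J}\Rad(Ee_{j})=\bigoplus_{j\in J}Ee_{j}=E$ because each $R_{j}$ is small. Hence $T$ is small.

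The only mild subtlety is making sure Proposition \ref{localalmostQF} really applies here (it is stated for right max-$QF$ local rings, and in the commutative setting this is exactly max-$QF$), and that the smallness of a finite product follows from the smallness of the factors; both are short checks, not real obstacles. Putting the pieces together gives $R=S\times T$ with $S$ self-injective and $T$ small, as required.
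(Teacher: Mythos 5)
Your proof is correct and follows essentially the same route as the paper: decompose the commutative semiperfect ring into a finite product of local rings, apply Lemma \ref{product} to see each factor is max-$QF$, and invoke Proposition \ref{localalmostQF} to conclude each factor is self-injective or small. You additionally spell out why the regrouped products $S$ and $T$ inherit self-injectivity and smallness, a step the paper leaves implicit.
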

\begin{proof}
Let $R$ be a commutative semiperfect ring, then by \cite[Theorem 23.11]{Lam:AFirstCourseInNoncommutativeRings}, $R=R_{1}\times...\times R_{n}$, where $R_{i}$ is a local ring $(1\leq i\leq n )$. Hence, by Lemma \ref{product} and Proposition \ref{localalmostQF}, $R$ can be written as a direct product of local max-$QF$ rings and every local max-$QF$ ring either self-injective or small.
\end{proof}

\begin{corollary}
Let $R$ be a right Noetherian local ring. Then the following are equivalent.
\begin{enumerate}
\item $R$ is right almost-$QF$.
\item $R$ is right max-$QF$.
\item $R$ is $QF$ or right small.
\end{enumerate}
\end{corollary}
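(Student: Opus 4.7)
The plan is to establish the equivalence via the cycle $(1)\Rightarrow(2)\Rightarrow(3)\Rightarrow(1)$. The implication $(1)\Rightarrow(2)$ is immediate from the chain
\[
\text{projective} \Rightarrow R\text{-projective} \Rightarrow \text{max-projective}
\]
recorded earlier in the paper, so any injective right $R$-module which is $R$-projective is in particular max-projective.

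For $(2)\Rightarrow(3)$, I would appeal directly to Proposition \ref{localalmostQF}, which is tailor-made for the local setting: it asserts that a local right max-$QF$ ring is either right self-injective or right small. If $R$ is right small there is nothing left to prove. Otherwise $R$ is right self-injective, and combining this with the right Noetherian hypothesis, the classical theorem that a right Noetherian right self-injective ring is $QF$ (Faith--Walker) completes this step.

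For $(3)\Rightarrow(1)$, the case when $R$ is $QF$ is immediate, since then every injective right $R$-module is projective, hence trivially $R$-projective, so $R$ is right almost-$QF$. If instead $R$ is right small, then since $R$ is also right Noetherian by hypothesis, Proposition \ref{smallalmostQF} directly yields that $R$ is right almost-$QF$.

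The only step that requires invoking a result not already established in the excerpt is the classical fact that a right Noetherian right self-injective ring is $QF$; this will be the main (admittedly minor) obstacle, to be handled by citation rather than reproof. Every other implication is a direct application of a proposition proved earlier in the paper, so the corollary is essentially an assembly of Proposition \ref{localalmostQF} together with Proposition \ref{smallalmostQF}.
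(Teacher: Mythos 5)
Your proposal is correct and follows the paper's proof essentially verbatim: the paper likewise disposes of $(1)\Rightarrow(2)$ as clear, derives $(2)\Rightarrow(3)$ from Proposition \ref{localalmostQF} together with the fact that right Noetherian right self-injective rings are $QF$, and gets $(3)\Rightarrow(1)$ from Proposition \ref{smallalmostQF}. No further comparison is needed.
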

\begin{proof}
$(1)\Rightarrow (2)$ Clear. $(3)\Rightarrow (1)$ Follows from Proposition \ref{smallalmostQF}.

$(2)\Rightarrow (3)$ Clear by Proposition \ref{localalmostQF}, since right Noetherian right self-injective rings are $QF$.
\end{proof}

We do not know whether every right chain ring is almost-$QF$. But the following result will imply that each right chain ring with $P(R)=0$ is right almost $QF$.

\begin{proposition} \label{P(R)=0}
Let $R$ be a right chain ring and $J=J(R)$. Then $P(R)=\bigcap_{n\geq 1} J^{n}$.
\end{proposition}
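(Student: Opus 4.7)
The plan is to establish both inclusions of $P(R) = \bigcap_n J^n$, relying on two standard facts: every chain ring is local (so $R$ has a unique maximal right ideal $J$), and over any local ring, $\Rad(M) = MJ$ for every right $R$-module $M$ (since $M/MJ$ is naturally a vector space over the division ring $R/J$).

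For the inclusion $P(R) \subseteq \bigcap_n J^n$: because $P(R)$ is a radical submodule, $P(R) = \Rad(P(R)) = P(R)J$; iterating yields $P(R) = P(R)J^n \subseteq J^n$ for each $n$, so $P(R) \subseteq \bigcap_n J^n$.

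For the reverse inclusion $\bigcap_n J^n \subseteq P(R)$: write $K = \bigcap_n J^n$. Since $P(R)$ is the \emph{largest} radical submodule of $R$, it suffices to verify that $K$ itself is radical, i.e.\ $KJ = K$. Only $K \subseteq KJ$ requires work, and I argue by contradiction: assume there is $a \in K \setminus KJ$. Step~1 reduces to the principal case $K = aR$: for any $b \in K$, the chain condition on right ideals forces $aR$ and $bR$ to be comparable; if $aR \subsetneq bR$, then writing $a = br$ forces $r \in J$ (otherwise $r$ is a unit and $aR = bR$), so $a = br \in KJ$, contradicting $a \notin KJ$. Hence $bR \subseteq aR$ for every $b \in K$, giving $K \subseteq aR \subseteq K$, so $K = aR$. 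Step~2 derives a contradiction from $K = aR$ with $a \neq 0$: since $aR = J^n$ for some $n$ would force $J^n = J^{n+1}$ and then (by Nakayama applied to the principal ideal $J^n$) $J^n = 0$ and $a = 0$, we have $aR \subsetneq J^n$ for every $n$. Picking $b_n \in J^n \setminus aR$ for each $n$, the chain condition gives $aR \subsetneq b_n R$ and then $aR \subseteq b_n J$ (since $b_n J$ is the unique maximal submodule of $b_n R$), so $a \in b_n J$ for each $n$. Using that $K$, as an intersection of the two-sided ideals $J^n$, is itself two-sided (so $Ra \subseteq aR$ and hence $b_n a = a t_n$ for suitable $t_n \in R$), one combines the descending-chain data on $\{b_n R\}$ with this two-sided structure to extract a relation $a(1-c) = 0$ for some $c \in J$. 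Since $1-c$ is a unit in the local ring $R$, this forces $a = 0$, the desired contradiction.

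The main obstacle is Step~2, where the chain condition on right ideals and the two-sidedness of $K$ must be combined: the chain alone gives a strictly descending sequence of principal ideals $b_n R \supsetneq aR$, but it is the two-sided structure (via $Ra \subseteq aR$) that permits transferring the right-sided factorization data to the left-acting identity that yields $a = 0$.
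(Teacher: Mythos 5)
Your first inclusion $P(R)\subseteq\bigcap_n J^n$ is correct and in fact cleaner than the paper's (which routes through complete primeness of $P(R)=P(R)^2$): since a right chain ring is local, $\Rad(M)=MJ$ for every module, so $P(R)=\Rad(P(R))=P(R)J$ and iteration gives $P(R)\subseteq J^n$ for all $n$. Step~1 of the reverse inclusion is also sound: if $K=\bigcap_n J^n$ had an element $a\notin KJ$, comparability of principal right ideals forces $K=aR$, and your Nakayama argument correctly rules out $K=J^n$, so $K\subsetneq J^n$ for every $n$. Up to this point you have essentially reproduced the reduction the paper performs (the paper gets $A=aR$ from $A/AJ$ simple and $AJ\ll A$).

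The gap is the end of Step~2, which is the entire content of the proposition. From $b_n\in J^n\setminus aR$ you correctly get $a=b_nc_n$ with $c_n\in J$, but this only shows $a\in\bigcap_n J^nJ=\bigcap_n J^{n+1}=K$, which you already knew; the whole difficulty is interchanging the intersection with the product to land in $\bigl(\bigcap_n J^n\bigr)J=KJ$. The sentence ``one combines the descending-chain data on $\{b_nR\}$ with this two-sided structure to extract a relation $a(1-c)=0$'' is an assertion, not an argument: $Ra\subseteq aR$ gives $b_na=at_n$, but nothing forces the resulting coefficient into $J$ (the factorization may well read $xa=at$ with $t$ a unit, and then no contradiction appears). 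The paper closes exactly this step by importing nontrivial structure theory of chain rings from Facchini's book: when $J^n\neq 0$ for all $n$, the ideal $A=\bigcap_n J^n$ is \emph{completely prime} (Proposition 5.2(d)), and a principal $A$ must equal $0$ or $J$ (Proposition 5.2(f)), each alternative finishing the proof. Your sketch neither proves nor cites a substitute for these facts, so as written the reverse inclusion is not established. Either invoke the Facchini results as the paper does, or supply an actual derivation of $a\in aJ$ from $K=aR\subsetneq J^n$ for all $n$ --- the latter appears to genuinely require the complete primeness of $K$.
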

\begin{proof}
Assume first that $J^{n}=0$ for some $n\in \mathbb{Z}^{+}$. Then $\bigcap_{n\geq 1} J^{n}=0$, and so, by \cite[Proposition 5.3(b)]{module-theory}, $R$ is a right Noetherian ring with $P(R)=0$. On the other hand if we suppose that $J^{n}\neq 0$ for all $n\in \mathbb{Z}^{+}$, then, by \cite[Proposition 5.2(d)]{module-theory}, $A=\bigcap_{n\geq 1} J^{n}$ is a completely prime ideal. Let us now look at the case $A\neq AJ$. Then $\frac{A}{AJ}$ simple right $R$-module and $AJ\ll A$. Let $a\in A \setminus AJ$. If we have $A=aR+AJ$, then $A=aR$, whence either $A=J(R)$ or $A=0$, by \cite[Proposition 5.2(f)]{module-theory}. If $A=\bigcap_{n\geq 1} J^{n}=0$, then $R$ is a right Noetherian ring with $P(R)=\bigcap_{n\geq 1} J^{n}=0$. Otherwise, if $A=J(R)=\bigcap_{n\geq 1} J^{n}$, then $J=J^{2}$, but since $A\neq AJ$, this is not the case. If we look at the case $A=AJ$, then $A\subseteq P(R)$. Since $P(R)=P^{2}(R)$, $P(R)$ is a completely prime ideal of $R$, and so, by \cite[Lemma 5.1]{module-theory}, $P(R)\subseteq A$. Hence, $P(R)=A=\bigcap_{n\geq 1} J^{n}$.
\end{proof}

\begin{corollary} Let $R$ be a right chain ring. Then $R/P(R)$ is a right almost-$QF$ ring.
\end{corollary}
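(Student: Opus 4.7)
The plan is to reduce the corollary to showing that any right chain ring $S$ with $P(S) = 0$ is right almost-$QF$. Setting $\bar{R} = R/P(R)$, I first note that $P(\bar{R}) = 0$ by idempotency of $P$, and that $\bar{R}$ remains a right chain ring since its right ideals correspond bijectively to right ideals of $R$ containing $P(R)$, preserving the total ordering.

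Applying Proposition~\ref{P(R)=0} to $\bar{R}$ yields $\bigcap_{n\ge 1} J(\bar R)^n = P(\bar R) = 0$, and then \cite[Proposition~5.3(b)]{module-theory} gives that $\bar R$ is right Noetherian. Proposition~\ref{smallalmostQF} then reduces the almost-$QF$ conclusion to verifying right smallness of $\bar R$, i.e.\ (by Lemma~\ref{small}) to $\Rad(E) = E$ for every injective right $\bar R$-module $E$.

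The key input will be the dichotomy from the proof of Proposition~\ref{P(R)=0}: either $J(\bar R)$ is nilpotent (so $\bar R$ is right Artinian), or $J(\bar R)^n \ne 0$ for all $n$ and $\bigcap J(\bar R)^n = 0$ is a completely prime ideal, making $\bar R$ a chain domain. In the domain subcase I would pick nonzero $a \in J(\bar R)$; left multiplication $L_a\colon \bar R \to \bar R$ is injective, and applying $\Hom_{\bar R}(-,E)$ to the short exact sequence $0 \to \bar R \xrightarrow{L_a} \bar R \to \bar R/a\bar R \to 0$ with $E$ injective yields a surjection $E \to E$, namely right multiplication by $a$. Hence $Ea = E$, and since $E \cdot J(\bar R) \subseteq \Rad(E)$ always, we get $\Rad(E) \supseteq Ea = E$, as required.

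The main obstacle is the Artinian subcase, where $J(\bar R)$ is nilpotent and smallness may genuinely fail (e.g.\ if $\bar R$ is self-injective, then $\Rad(E(\bar R)) = J(\bar R) \neq \bar R$), so Proposition~\ref{smallalmostQF} does not apply directly. I would handle this by arguing separately that a right Artinian right chain local ring is right self-injective --- hence $QF$, hence trivially almost-$QF$ --- exploiting the simplicity of both its socle and its top together with duality for Artinian local rings. Pinning down this ``Artinian chain is $QF$'' step is the most delicate part of the argument.
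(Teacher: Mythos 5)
Your proposal follows essentially the same route as the paper: reduce to the case $P(R)=0$, use Proposition~\ref{P(R)=0} together with \cite[Proposition~5.3]{module-theory} to get that the ring is right Noetherian, and split on whether $J$ is nilpotent, handling the non-nilpotent case via ``chain domain $\Rightarrow$ right small'' and Proposition~\ref{smallalmostQF}. The step you flag as the delicate one --- that a right Artinian right chain ring is right self-injective, hence $QF$ --- is exactly what the paper disposes of by citing \cite[Lemma~5.4]{module-theory}, while your explicit divisibility argument showing $\Rad(E)=E$ in the domain case is a correct filling-in of the paper's unjustified ``whence $R$ is right small.''
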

\begin{proof}
Since $P(R)$ is an ideal of $R$, and every factor ring of a right chain ring is a right chain ring, without loss of generality we may assume that $P(R)=0$. Then by Proposition \ref{P(R)=0} and \cite[Proposition 5.3]{module-theory}, $R$ is a right Noetherian ring. We have two cases for $J=J(R)$: if $J$ is nilpotent, then $R$ is Artinian. This implies that $R$ is right self-injective by \cite[Lemma 5.4]{module-theory} which then yields, $R$ is $QF$. So now assume that $J$ is not nilpotent. Then $R$ is a domain by \cite[Proposition 5.2(d)]{module-theory}, whence $R$ is right small. So, $R$ is right almost-$QF$ by Proposition \ref{smallalmostQF}. Thus in any case $R$ is right almost-$QF$.
\end{proof}

We shall characterize commutative Noetherian max-$QF$ rings.

\begin{proposition} (See \cite{matlisinj}) \label{matlis}
Let $R$ be a commutative Noetherian ring, $P$ be a prime ideal of $R$, $E=E(R/P)$, and $A_{i}=\{x\in E : P^{i}x=0\}$. Then:
\begin{enumerate}
\item $A_{i}$ is a submodule of $E$, $A_{i}\subseteq A_{i+1}$, and $E=\bigcup A_{i}$.
\item If $P$ is a maximal ideal of $R$, then $A_{i}\subseteq E(R/P)$ is a finitely generated $R$-module for every integer $i$.
\item $E(R/P)$ is Artinian.
\end{enumerate}
\end{proposition}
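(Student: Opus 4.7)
The plan is to address the three parts in order, with part (3) being the substantive one.

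For (1), the statements that $A_i$ is a submodule and $A_i \subseteq A_{i+1}$ are immediate from the fact that $P^i$ is an ideal and $P^{i+1} \subseteq P^i$. For the equality $E = \bigcup_i A_i$, I would take a nonzero $x \in E$ and analyse the cyclic submodule $xR$. Since $E = E(R/P)$ is indecomposable injective, $\operatorname{Ass}(E) = \{P\}$, hence $\operatorname{Ass}(xR) = \{P\}$ and consequently $\sqrt{\operatorname{ann}(x)} = P$. As $R$ is Noetherian, $P$ is finitely generated, so some power $P^n \subseteq \operatorname{ann}(x)$, which means $x \in A_n$.

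For (2), with $P$ maximal and $k = R/P$ a field, I would first observe that $A_1 = \Soc(E) = R/P$ is one-dimensional over $k$. The induction step consists in showing that if $A_i$ is finitely generated then so is $A_{i+1}$. I would introduce the natural map
\[
\phi : A_{i+1}/A_i \longrightarrow \Hom_R(P^i/P^{i+1},\, A_1), \qquad \phi(\bar x)(\bar r) = rx,
\]
which is well-defined (the image lands in $A_1$ since $P(rx) = P^{i+1}x = 0$) and injective (if $\phi(\bar x) = 0$ then $P^i x = 0$, i.e.\ $x \in A_i$). Since $P^i/P^{i+1}$ is a finitely generated $k$-vector space and $A_1 \cong k$, the right-hand side is a finite-dimensional $k$-space and hence finitely generated as an $R$-module. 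Thus $A_{i+1}/A_i$ is finitely generated, completing the induction.

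For (3), each $A_i$, being finitely generated and annihilated by $P^i$, is a module over the Artinian local ring $R/P^i$ (Noetherian with nilpotent maximal ideal); in particular $A_i$ has finite length and is Artinian. The main obstacle is that $E = \bigcup_i A_i$ is only a countable increasing union of Artinian modules, and such unions in general fail to be Artinian; a genuine extra ingredient is needed. To supply it, I would pass to the $P$-adic completion $\widehat R = \varprojlim R/P^n$, which is Noetherian. Every element of $E$ lies in some $A_n$, so $E$ carries a natural $\widehat R$-module structure, and its $R$- and $\widehat R$-submodule lattices coincide. Given a descending chain $M_1 \supseteq M_2 \supseteq \cdots$ in $E$, the annihilators $I_j := \operatorname{ann}_{\widehat R}(M_j)$ form an ascending chain of ideals of $\widehat R$, which stabilizes. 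The key closure identity $M = \{\,x \in E : I_M x = 0\,\}$ with $I_M := \operatorname{ann}_{\widehat R}(M)$ is the heart of the argument: given $x \notin M$ one obtains a nonzero map $(xR + M)/M \to R/P \hookrightarrow E$, extends it to $E/M \to E$ via injectivity of $E$, and lifts the composite $E \twoheadrightarrow E/M \to E$ to a $\psi \in \operatorname{End}_R(E) = \widehat R$ with $\psi(M) = 0$ but $\psi(x) \neq 0$. Once this identity is in hand, stabilization of $I_j$ forces stabilization of $M_j$, and $E$ is Artinian.
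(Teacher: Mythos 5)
Your proposal is correct, but there is nothing in the paper to compare it against: the authors state this proposition with a bare citation to Matlis's 1958 paper and give no proof. What you have written is, in essence, Matlis's original argument (also the standard textbook treatment): part (1) via $\operatorname{Ass}(E(R/P))=\{P\}$ and the Noetherian hypothesis; part (2) via $A_1=\operatorname{Soc}(E)\cong R/P$ and the embedding $A_{i+1}/A_i\hookrightarrow\Hom_R(P^i/P^{i+1},A_1)$; part (3) via the annihilator correspondence over $\widehat{R}$. All the individual steps check out, including the one you rightly identify as the crux, namely the closure identity $M=\{x\in E: I_Mx=0\}$, whose proof correctly uses that $(xR+M)/M$ is a nonzero module over the local ring $R/P^n$ and hence surjects onto $R/P$.

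Two remarks. First, your part (3) silently invokes $\operatorname{End}_R(E)\cong\widehat{R}$ to convert the endomorphism $\psi$ into an element of a \emph{Noetherian} ring; this identification is itself a nontrivial theorem of Matlis, so in a self-contained write-up you should either cite it explicitly or replace it by a direct proof that the left ideals $\{\psi\in\operatorname{End}_R(E):\psi(M)=0\}$ satisfy the ascending chain condition. Second, as stated in the paper, item (3) carries no maximality hypothesis on $P$, yet it is false for non-maximal primes (e.g.\ $E_{\Z}(\Q)=\Q$ is not Artinian over $\Z$); your proof correctly reads (3) as being under the hypothesis of (2), which is how the paper uses it, but this is worth flagging as an imprecision in the statement rather than in your argument.
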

\begin{lemma} \label{E(S)projective}
Let $R$ be a commutative Noetherian ring, and let $E=E(R/Q)$ for a maximal ideal $Q$ of $R$. The following are equivalent.
\begin{enumerate}
\item $E$ is $R$-projective.
\item $E$ is max-projective.
\item $\Rad(E)=E$ or $E$ is projective, local and isomorphic to an ideal of $R$.
\end{enumerate}
\end{lemma}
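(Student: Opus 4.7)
The plan is to prove the cycle $(1)\Rightarrow(2)\Rightarrow(3)\Rightarrow(1)$. The implication $(1)\Rightarrow(2)$ is immediate from the definitions. I would handle $(3)\Rightarrow(1)$ quickly using the Noetherian hypothesis, and then devote the bulk of the argument to $(2)\Rightarrow(3)$, which rests on the Matlis structure of $E=E(R/Q)$ recorded in Proposition~\ref{matlis}: $E$ is Artinian, has $R/Q$ as simple essential socle, and every element is killed by a power of $Q$.

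For $(3)\Rightarrow(1)$, if $E$ is projective there is nothing to do. If instead $\Rad(E)=E$, pick any ideal $I$ of $R$ and any $f:E\to R/I$. The image $f(E)\subseteq R/I$ is a submodule of a Noetherian module, hence finitely generated, while $f(E)=f(\Rad(E))\subseteq\Rad(f(E))$ forces $\Rad(f(E))=f(E)$. Any nonzero finitely generated module has a maximal submodule, so this forces $f(E)=0$ and $f$ lifts trivially to the zero map $E\to R$.

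For $(2)\Rightarrow(3)$, suppose $E$ is max-projective and $\Rad(E)\neq E$. Then $E$ has a simple quotient, which must be isomorphic to $R/Q$: any simple quotient has the form $R/\mathfrak{m}$ for some maximal ideal $\mathfrak{m}$, but every element of $E$ is killed by a power of $Q$ and $R/\mathfrak{m}$ is a field, so $\mathfrak{m}=Q$. Max-projectivity lifts the surjection $f:E\to R/Q$ to $g:E\to R$; set $L=g(E)$, an ideal of $R$ with $L+Q=R$. The module $L$ is Artinian (quotient of $E$) and finitely generated (submodule of the Noetherian ring $R$), hence of finite length. Every element of $L$ is killed by a power of $Q$, so multiplication by any $s\in R\setminus Q$ is injective on $L$ and, by finite length, also surjective; thus the canonical map $L\to L_Q$ is an isomorphism of $R$-modules. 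Localizing $L+Q=R$ yields $L_Q+QR_Q=R_Q$, and since $R_Q$ is local this forces $L_Q=R_Q$. Hence $L\cong R_Q$ as $R$-modules, and $R_Q$ inherits finite length, becoming an Artinian local ring.

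To upgrade $g$ to an isomorphism I would use that $E=E_R(R/Q)$ coincides with $E_{R_Q}(R_Q/QR_Q)$ and so is injective over the local ring $R_Q$, while $g$ is automatically $R_Q$-linear (since each $s\in R\setminus Q$ already acts invertibly on both $E$ and $L$). The target $L\cong R_Q$ is $R_Q$-free, so the surjection $g$ splits as a map of $R_Q$-modules; indecomposability of the injective envelope $E$ then forces $\ker g=0$. The natural ring structure on $L\cong R_Q$ has an identity element that is an idempotent $e\in R$ with $L=eR$, so $R=eR\oplus(1-e)R$ is a ring direct product, $L=eR$ is an ideal, and $E\cong eR$ is projective, isomorphic to an ideal of $R$, and local (the unique maximal submodule being the preimage of $QR_Q$ under the composed isomorphism). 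The delicate step is the identification $L\cong R_Q$, which simultaneously exploits the finite-length constraint on $L$ and the relation $L+Q=R$; once that is in place, splitting $g$ via the $R_Q$-projectivity of $L$ together with the indecomposability of $E$ finishes the proof.
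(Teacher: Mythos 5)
Your argument is correct, but for the key implication $(2)\Rightarrow(3)$ it takes a genuinely different route from the paper's. The paper first shows $\Rad(E)=QE$, then uses the Matlis filtration $A_i$ of Proposition \ref{matlis} to produce a finitely generated $K$ with $Q^{n}E+K=E$, identifies $P(E)=Q^{n}E$, deduces that $E/P(E)$ has finite length and is max-projective (Lemma \ref{factormaxprojective} applied to the preradical $P$), concludes $E/P(E)$ is projective by Corollary \ref{cor:finitelengthmaxproj.isproj.}, and finally invokes Ware's theorem on indecomposable projectives with local endomorphism ring to get that $E$ is local, cyclic and a summand of $R$. You instead work directly with the image $L=g(E)$ of a single lifted map: the observation that each $s\in R\setminus Q$ acts bijectively on the finite-length $Q$-torsion module $L$ and on the indecomposable injective $E$ lets you identify $L\cong R_Q$ by localization, split $g$ using freeness of $L$ over $R_Q$, and extract the idempotent $e$ with $L=eR$ from the injectivity of $L\to L_Q$ (which is what makes $e^2=e$ and $L=eR$ legitimate). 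This avoids both the $P(E)$ computation and the appeal to Ware's theorem, and it produces the ring decomposition $R=eR\times(1-e)R$ explicitly, at the cost of some localization bookkeeping; both arguments ultimately rest on the same inputs from Proposition \ref{matlis} (Artinianness and $Q$-power torsion) together with the indecomposability of $E$. Your treatment of $(3)\Rightarrow(1)$, which the paper dismisses as obvious, is the standard radical/finite-generation argument and is fine.
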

\begin{proof} $(1) \Rightarrow (2)$ is clear.

$(2) \Rightarrow (3)$ Assume that $\Rad(E)\neq E$. Since $R$ is commutative, $\Rad(E)=\bigcap_{i\in \wedge} IE$, where $\wedge$ is the set of all maximal ideals of $R$, \cite[Exercises 15.(5)]{Anderson-Fuller:RingsandCategoriesofModules}. Now we will see that $IE=E$ for any maximal ideal $I$ distinct from $Q$. Let $I$ be a maximal ideal distinct from $Q$. The fact $I+Q=R$ implies $I+Q^{n}=R$ for any $n\in \mathbb{N}$. Let $x\in E$. Then $Q^{n}x=0$ for some $n\in \mathbb{N}$, by Proposition \ref{matlis}. We have  $1=y+z$, where $y\in I$, $z\in Q^{n}$, and then $x=yx\in IE$. Hence, $\Rad(E)=\bigcap_{i\in \wedge} IE=QE\neq E$. Since $R$ is commutative and $(E/QE)Q=0$, $E/QE$ is a semisimple $R/Q$-module, and so $E/QE$ semisimple as an $R$-module. Then $E/QE$ is finitely generated by Artinianity of $E$, and hence $QE+K=E$ for some finitely generated submodule $K$ of $E$. Since $K$ is finitely generated, $K$ is a submodule of $A_{n}$ for some $n$, by Proposition~\ref{matlis}. Thus $Q^{n}K=0$. Since $QE+K=E$ ,  $Q^{n+1}E=Q^{n}E$, implying $Q^{n}E\subseteq P(E)$. On the other hand, $Q^{2}E+QK=QE$, and so $Q^{2}E+K=E$. Continuing in this manner $Q^{n}E+K=E$, whence $E/Q^{n}E$ is finitely generated. Since $R$ is Noetherian, $P(E/Q^{n}E)=0$ and so $P(E)=Q^{n}E$. Since $E/P(E)$ is finitely generated, $E/P(E)$ has finite composition length by Proposition \ref{matlis}(3). By max-projectivity of $E$ and Lemma \ref{factormaxprojective}, $E/P(E)$ is max-projective. Thus $E/P(E)$ is projective by Corollary \ref{cor:finitelengthmaxproj.isproj.}. Then, $E=P(E)\oplus L$ for some projective submodule $L$ of $E$. Since $E$ is indecomposable and $P(E)\neq E$, $E=L$. Therefore $E$ is projective. Furthermore, since $E$ is indecomposable,  the endomorphism ring of $E$ is local by \cite[Lemma 2.25]{module-theory}. By \cite[Theorem 4.2]{ware}, $E$ is a local module, so it is cyclic and $R\cong E \oplus I$ for some ideal $I$ of $R$. Hence $E$ is isomorphic to an ideal of $R$. This proves $(3)$.

$(3) \Rightarrow (1)$ is obvious.
\end{proof}

\begin{lemma} (See \cite[9.7]{kasch})  \label{rad} Suppose $R$ commutative Noetherian or semilocal right Noetherian ring and $\{M_{i}\}_{i\in I}$ be a class of right $R$-modules. Then $\Rad(\prod_{i\in I}M_{i})=\prod_{i\in I}\Rad(M_{i})$.
\end{lemma}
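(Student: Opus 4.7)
The plan is to prove the two inclusions separately. The easy direction $\Rad(\prod_{i\in I} M_{i}) \subseteq \prod_{i\in I} \Rad(M_{i})$ holds over any ring: for each index $j$, the projection $\pi_{j}:\prod_{i\in I} M_{i}\to M_{j}$ is an $R$-homomorphism, so it carries $\Rad$ into $\Rad$. Thus the content of the lemma is the reverse inclusion, and I will handle the semilocal right Noetherian case and the commutative Noetherian case by parallel arguments resting on a single auxiliary identity.

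The auxiliary identity I need is: for any finitely generated right ideal $\mathfrak{a}=a_{1}R+\cdots+a_{n}R$ of $R$,
\[
\Big(\prod_{i\in I} M_{i}\Big)\mathfrak{a} \;=\; \prod_{i\in I} M_{i}\mathfrak{a}.
\]
This is precisely where Noetherianity enters: any tuple $(x_{i})_{i\in I}$ with $x_{i}=\sum_{k=1}^{n} m_{i}^{(k)} a_{k}$ can be rewritten as a single finite sum $\sum_{k=1}^{n} (m_{i}^{(k)})_{i\in I}\cdot a_{k}$, each summand lying in $(\prod_{i} M_{i})a_{k}$. With this in hand, the semilocal right Noetherian case is almost immediate: since $R/J(R)$ is semisimple, for any $R$-module $N$ the quotient $N/NJ(R)$ is a semisimple $R/J(R)$-module and therefore has vanishing radical, which combined with the always valid inclusion $NJ(R)\subseteq \Rad(N)$ yields $\Rad(N)=NJ(R)$; applying the auxiliary identity to $\mathfrak{a}=J(R)$ (finitely generated by Noetherianity) then gives
\[
\Rad\Big(\prod_{i} M_{i}\Big) = \Big(\prod_{i} M_{i}\Big)J(R) = \prod_{i} M_{i} J(R) = \prod_{i} \Rad(M_{i}).
\]

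For the commutative Noetherian case, I first plan to establish the pointwise formula $\Rad(N)=\bigcap_{\mathfrak{m}} N\mathfrak{m}$, the intersection ranging over the maximal ideals of $R$: every maximal submodule of $N$ has simple quotient $R/\mathfrak{m}$ for some $\mathfrak{m}$ and hence contains $N\mathfrak{m}$, while $N/N\mathfrak{m}$ is a vector space over $R/\mathfrak{m}$ whose hyperplanes intersect to zero. Since each $\mathfrak{m}$ is finitely generated by Noetherianity, the auxiliary identity applies to every $\mathfrak{m}$, and interchanging an arbitrary intersection with the direct product yields
\[
\Rad\Big(\prod_{i} M_{i}\Big) = \bigcap_{\mathfrak{m}}\Big(\prod_{i} M_{i}\Big)\mathfrak{m} = \bigcap_{\mathfrak{m}}\prod_{i} M_{i}\mathfrak{m} = \prod_{i}\bigcap_{\mathfrak{m}} M_{i}\mathfrak{m} = \prod_{i}\Rad(M_{i}).
\]
The only delicate point I foresee is the auxiliary identity itself: it genuinely requires finite generation of $\mathfrak{a}$, and this is the sole place where the Noetherian hypothesis is used; everything else is routine bookkeeping.
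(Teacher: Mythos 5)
Your proof is correct. There is nothing in the paper to compare it against: the lemma is stated with a bare pointer to \cite{kasch} and no argument is given, so your write-up supplies the proof that the citation stands in for, and it is the standard one. All the steps check: the inclusion $\Rad(\prod_{i} M_{i})\subseteq \prod_{i}\Rad(M_{i})$ via the projections holds over any ring; the auxiliary identity $(\prod_{i} M_{i})\mathfrak{a}=\prod_{i} M_{i}\mathfrak{a}$ is valid precisely because the \emph{uniform} finite generating set $a_{1},\dots,a_{n}$ lets you reassemble an arbitrary tuple $(x_{i})_{i}$ with $x_{i}\in M_{i}\mathfrak{a}$ into a single $n$-term sum, and this is indeed the only place the Noetherian hypothesis enters (for $\mathfrak{a}=J(R)$ in the semilocal case, for $\mathfrak{a}=\mathfrak{m}$ maximal in the commutative case); the reductions $\Rad(N)=NJ(R)$ over a semilocal ring and $\Rad(N)=\bigcap_{\mathfrak{m}}N\mathfrak{m}$ over a commutative ring are both standard (the latter is the fact the paper itself invokes elsewhere with a citation to \cite{Anderson-Fuller:RingsandCategoriesofModules}, and your hyperplane argument for it is fine, including the degenerate case $N\mathfrak{m}=N$); and the final interchange $\bigcap_{\mathfrak{m}}\prod_{i}M_{i}\mathfrak{m}=\prod_{i}\bigcap_{\mathfrak{m}}M_{i}\mathfrak{m}$ is a purely componentwise set-theoretic identity needing no hypothesis. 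The only cosmetic remark is that in the commutative case you use slightly less than full Noetherianity --- finite generation of the maximal ideals suffices --- but that is a strengthening, not a gap.
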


\begin{lemma}
Let $R$ be a commutative Noetherian ring. Then the following are equivalent.
\begin{enumerate} \label{smallring}
\item $R$ is a small ring, i.e., $R\ll E(R)$.
\item $\Rad(E(S))=E(S)$ for each simple $R$-module $S$.
\end{enumerate}
\end{lemma}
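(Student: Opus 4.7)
The forward direction $(1) \Rightarrow (2)$ is immediate from Lemma \ref{small}: every $E(S)$ is injective, so $\Rad(E(S)) = E(S)$. For $(2) \Rightarrow (1)$, my plan is to invoke Lemma \ref{small} once more and verify that $\Rad(E(R)) = E(R)$. Since $R$ is commutative Noetherian, Matlis' structure theorem for injective modules yields a decomposition
\[
E(R) = \bigoplus_{i \in I} E(R/P_i),
\]
where each $P_i$ is a prime ideal (an associated prime of $R$). Because the radical commutes with direct sums, the task reduces to showing that $\Rad(E(R/P)) = E(R/P)$ for every prime $P$ occurring in this decomposition. When $P$ is maximal, this is precisely hypothesis (2); the real content is the case when $P$ is not maximal.

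For nonmaximal $P$ I plan to exploit the natural $R_P$-module structure on $E(R/P)$: for every $s \in R \setminus P$, multiplication $m_s \colon E(R/P) \to E(R/P)$ is an $R$-automorphism. Injectivity of $m_s$ follows from $\operatorname{Ass}(E(R/P)) = \{P\}$, which forces $\operatorname{ann}_R(x)$ to be $P$-primary for every nonzero $x \in E(R/P)$, so $s \notin P$ implies $sx \neq 0$. Surjectivity is automatic because $\Image(m_s)$ is a nonzero injective submodule of the indecomposable injective $E(R/P)$, hence equal to it. Now given a maximal ideal $\mathfrak{m}$, one has $\mathfrak{m} \neq P$ and therefore $\mathfrak{m} \not\subseteq P$, so there exists $s \in \mathfrak{m} \setminus P$. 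Such an $s$ acts invertibly on $E(R/P)$ but annihilates $R/\mathfrak{m}$, hence any homomorphism $\phi \colon E(R/P) \to R/\mathfrak{m}$ satisfies $\phi(x) = s\phi(s^{-1}x) = 0$ for every $x$. Thus $\Hom_R(E(R/P), R/\mathfrak{m}) = 0$ for all maximal $\mathfrak{m}$, which yields $\Rad(E(R/P)) = E(R/P)$.

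The main obstacle is precisely the mismatch between hypothesis (2), which only addresses $E(R/\mathfrak{m})$ for maximal $\mathfrak{m}$, and the Matlis summands of $E(R)$, which may correspond to nonmaximal associated primes of $R$. The localization argument above, invoking the $R_P$-module structure on $E(R/P)$ to rule out simple quotients, is the key step that bridges this gap; everything else is a routine reduction using Matlis decomposition and the fact that radicals commute with direct sums.
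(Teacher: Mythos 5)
Your proof is correct, but it takes a genuinely different route from the paper's. The paper's proof of $(2)\Rightarrow(1)$ is a two-line soft argument: it forms $C=\oplus_{S\in\Delta}E(S)$ over a complete set $\Delta$ of simple modules, notes that $C$ is an injective cogenerator so that the injective module $E(R)$ is a direct summand of some power $C^I$, and then applies Lemma \ref{rad} (the radical commutes with direct \emph{products} over a commutative Noetherian ring) together with hypothesis (2) to get $\Rad(C^I)=C^I$ and hence $\Rad(E(R))=E(R)$; Lemma \ref{small} finishes. You instead decompose $E(R)$ itself via Matlis theory into indecomposable injectives $E(R/P)$ indexed by the associated primes of $R$, use hypothesis (2) only for the maximal associated primes, and dispose of the nonmaximal ones by the standard localization argument: each $s\notin P$ acts bijectively on $E(R/P)$ (injectively because $\operatorname{ann}(x)$ is $P$-primary for $0\neq x$, surjectively by indecomposability), so choosing $s$ in a maximal ideal but outside $P$ kills every homomorphism $E(R/P)\to R/\mathfrak{m}$, giving $\Rad(E(R/P))=E(R/P)$ unconditionally for nonmaximal $P$. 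Your route only needs the radical to commute with direct sums (which is automatic), whereas the paper leans on the less elementary product formula of Lemma \ref{rad}; in exchange you invoke the Matlis decomposition and the $R_P$-structure of indecomposable injectives. A side benefit of your argument is that it isolates exactly where hypothesis (2) is used, namely on the maximal associated primes of $R$. Both arguments are valid and both use the Noetherian hypothesis in an essential way.
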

\begin{proof}
$(1)\Rightarrow(2)$: Clear by Lemma \ref{small}.

$(2)\Rightarrow(1)$: Let $\Delta$ be a complete set of representatives of simple $R$-modules. Then  $C=\oplus_{S \in \Delta} E(S)$ is an injective cogenerator. Then, for some index set $I$,  the injective hull $E(R)$ of $R$ is a direct summand of $C^I$. By Lemma \ref{rad}, $\Rad(C^I)=C^I$. Since $E(R)$ is a direct summand of $C^I$, we have $\Rad(E(R))=E(R)$. Thus  $R$ is a small ring by Lemma \ref{small}.
\end{proof}
\begin{theorem}
Let $R$ be a commutative Noetherian ring. Then the following are equivalent.
\begin{enumerate}
\item $R$ is almost-$QF$.
\item $R$ is max-$QF$.
\item $R=A \times B$, where $A$ is $QF$ and $B$ is small.
\end{enumerate}
\end{theorem}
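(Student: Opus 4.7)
The plan is to dispose of the easy implications first and then focus on $(2)\Rightarrow(3)$. The implication $(1)\Rightarrow(2)$ is immediate from the definitions, since every $R$-projective module is max-projective. For $(3)\Rightarrow(1)$, Lemma \ref{product} reduces to showing each of $A$ and $B$ is almost-$QF$: the factor $A$ is $QF$, so every injective $A$-module is projective hence $A$-projective; the factor $B$ is a commutative Noetherian small ring, which is almost-$QF$ by Proposition \ref{smallalmostQF}.

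For $(2)\Rightarrow(3)$, I would exploit Lemma \ref{E(S)projective} to classify injective hulls of simples. Assuming (2), for every maximal ideal $Q$ of $R$ the module $E(R/Q)$ is max-projective, so exactly one of the following holds: (a) $\Rad(E(R/Q))=E(R/Q)$, or (b) $E(R/Q)$ is projective, local, and (as the proof of Lemma \ref{E(S)projective} shows) a direct summand of $R$. Let $\Sigma$ denote the set of maximal ideals satisfying (b). For each $Q\in\Sigma$, commutativity of $R$ yields a central idempotent $e_{Q}$ with $E(R/Q)\cong e_{Q}R$ as $R$-modules. I would next show these idempotents are pairwise orthogonal: for $Q\neq Q'$ in $\Sigma$, the module $e_{Q}e_{Q'}R$ is a common direct summand of the indecomposable modules $E(R/Q)$ and $E(R/Q')$, whose socles $R/Q$ and $R/Q'$ are non-isomorphic, so this intersection must be zero. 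Since $R$ is Noetherian, no infinite family of nonzero pairwise orthogonal idempotents can exist (they would produce an infinite strictly ascending chain of ideals), so $\Sigma=\{Q_{1},\dots,Q_{n}\}$ is finite.

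Setting $e=e_{Q_{1}}+\cdots+e_{Q_{n}}$, $A=eR$, $B=(1-e)R$ provides a ring decomposition $R=A\times B$. Each $e_{Q_{i}}R$ is Noetherian, local, and self-injective as a ring: it is injective as an $R$-module (being $E(R/Q_{i})$), hence injective as an $e_{Q_{i}}R$-module; it is Artinian because $E(R/Q_{i})$ is Artinian by Proposition \ref{matlis}(3); so $e_{Q_{i}}R$ is a commutative local Artinian self-injective ring, hence $QF$, and therefore $A$, a finite product of $QF$ rings, is $QF$. For $B$, every maximal ideal has the form $N$ with $Q=A\times N$ maximal in $R$ and $Q\notin\Sigma$; the $B$-injective hull $E_{B}(B/N)$ coincides with $E_{R}(R/Q)$ (as $A$ annihilates $R/Q$), which satisfies $\Rad(E_{R}(R/Q))=E_{R}(R/Q)$ by case (a). Hence $\Rad(E_{B}(B/N))=E_{B}(B/N)$ for every maximal $N$ of $B$, and Lemma \ref{smallring} applied to $B$ yields that $B$ is small. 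The main technical obstacle will be carefully verifying the orthogonality and finiteness of $\{e_{Q}\}_{Q\in\Sigma}$, and confirming that the $QF$ and small-ring conditions on $A$ and $B$ are genuinely inherited when passing from the $R$-module category to the $A$- and $B$-module categories.
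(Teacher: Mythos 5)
Your proposal is correct and takes essentially the same route as the paper: both implications $(1)\Rightarrow(2)$ and $(3)\Rightarrow(1)$ are handled identically, and for $(2)\Rightarrow(3)$ both arguments rest on the dichotomy of Lemma \ref{E(S)projective}, use Noetherianity to make the ``projective'' part finite, identify $A$ as an injective Noetherian ring direct factor (hence $QF$), and deduce that $B$ is small from Lemma \ref{smallring}. The only difference is bookkeeping: you assemble $A$ from pairwise orthogonal central idempotents $e_{Q}$, while the paper takes $A=E(X)$ with $X$ the (finitely generated) sum of the minimal ideals $U$ satisfying $\Rad(E(U))\neq E(U)$ --- the resulting decomposition is the same.
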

\begin{proof}
$(1)\Rightarrow (2)$ is clear.

$(2) \Rightarrow (3)$ First suppose that $\Rad(E(S))=E(S)$ for all simple $R$-module $S$. Then $R$ is a small ring by Lemma \ref{smallring}.
On the other hand, if $\Rad(E(S))\neq E(S)$ for some simple $R$-module $S$, then $E(S)$ is isomorphic to a direct summand of $R$ by Lemma \ref{E(S)projective}. Let $X$ be sum of  minimal ideals $U$ of $R$ with $\Rad(E(U))\neq E(U)$. Then $E(U)$ is isomorphic to an ideal of $R$. Thus without loss of generality we can assume that $E(U)$ is an ideal of $R$. Since $R$ is Noetherian, $X$ is finitely generated, and so $A=E(X)=E(U_1)\oplus \cdots E(U_n)$ where each  $E(U_i)$ is an ideal of $R$.   Thus $R=A\oplus B$ for some ideal $B$ of $R$. Now $A$ is injective and Noetherian, so $A$ is a $QF$ ring. On the other hand, let $V$ be a simple $B$-module, then $V$ is a simple $R$-module. Let $E(V)$ be the injective hull of $V$.  As $V$ is a $B$-module, $VA=0$. If $\Rad(E(V))\neq E(V)$, then this would imply $V\subseteq A$, by the same arguments above. Thus  $\Rad(E(V))=E(V)$, and so $B$ is a small ring by Lemma \ref{smallring}.

$(3)\Rightarrow (1)$ Clear, by Proposition \ref{smallalmostQF} and Lemma \ref{product}.
\end{proof}

\begin{proposition}
Let $R$ be a semiperfect ring. Then the following are equivalent.
\begin{enumerate}
\item $R$ is right  almost-$QF$ and direct sum of small right $R$-modules is small.
\item $R$ is right max-$QF$ and direct sum of small right $R$-modules is small.
\item $R$ is right almost-$QF$ and $\Rad(Q)\ll Q$ for each injective right $R$-module $Q$.
\item $R$ is right max-$QF$ and $\Rad(Q)\ll Q$ for each injective right $R$-module $Q$.
\item $R$ is $QF$.
\end{enumerate}
\end{proposition}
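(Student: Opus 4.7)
The strategy is to verify the cycle $(1)\Rightarrow(2)\Rightarrow(4)\Rightarrow(5)\Rightarrow(1)$ together with $(5)\Rightarrow(3)\Rightarrow(4)$. The implications $(1)\Rightarrow(2)$ and $(3)\Rightarrow(4)$ are immediate from the fact that $R$-projective implies max-projective. For $(5)\Rightarrow(1),(2),(3),(4)$, I use that a $QF$ ring is right Artinian and hence right perfect, so by Bass every right $R$-module has small radical, giving the radical clauses in (3) and (4). For the direct-sum clauses in (1) and (2), each small $M_i$ satisfies $M_i\ll E(M_i)$, hence $M_i\subseteq\Rad(E(M_i))$, so
\[
\bigoplus_{i} M_i\subseteq\bigoplus_{i}\Rad(E(M_i))=\Rad\!\Bigl(\bigoplus_{i} E(M_i)\Bigr),
\]
which is small in $\bigoplus_i E(M_i)$ by right-perfectness.

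For $(4)\Rightarrow(5)$, Proposition~\ref{maxprojectiveradsmall}(1) gives it in one line: any injective right $R$-module $Q$ is max-projective (by max-$QF$) and has $\Rad(Q)\ll Q$ (by hypothesis), hence is projective over the semiperfect ring $R$. So every injective is projective, i.e.\ $R$ is $QF$.

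The heart of the proof is $(2)\Rightarrow(4)$. Fix an injective right $R$-module $Q$. Since $R$ is semiperfect (hence semilocal) and $J(R)\ll R$, Nakayama gives $xJ(R)\ll xR$ for each $x\in Q$, so every $xJ(R)$ is a small module; by hypothesis~(2) the external direct sum $D=\bigoplus_{x\in Q} xJ(R)$ is small. Extend the summation map $\varphi\colon D\to Q$ to $\bar\varphi\colon E(D)\to Q$ using the injectivity of $Q$. Then
\[
\bar\varphi(D)=\varphi(D)=\sum_{x\in Q}xJ(R)=QJ(R)=\Rad(Q),
\]
and since a surjective image of a small submodule is small, $\Rad(Q)\ll\bar\varphi(E(D))\subseteq Q$. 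The elementary transfer fact ``$N\ll M$ and $M\subseteq M'$ imply $N\ll M'$'' then upgrades this to $\Rad(Q)\ll Q$, as required.

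The main obstacle is the pair of closure properties of smallness invoked in $(2)\Rightarrow(4)$: that a surjective image of a small submodule is small, and that a small submodule of $M$ remains small in any larger ambient module containing it. Both are short modular-law arguments using injective hulls, but they must be handled carefully; together with the identity $\Rad(Q)=QJ(R)$ (which is where semiperfectness is essential, via $R/J(R)$ semisimple), they close the cycle.
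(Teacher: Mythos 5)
Your proof is correct, and it differs from the paper's in one substantive respect: where the paper disposes of the equivalence between the two side conditions (``direct sums of small modules are small'' versus ``$\Rad(Q)\ll Q$ for every injective $Q$'') by citing Rayar's Lemma~9, you prove both directions from scratch. Your $(2)\Rightarrow(4)$ argument --- writing $\Rad(Q)=QJ(R)=\sum_{x\in Q}xJ(R)$, forming the external direct sum $D=\bigoplus_{x\in Q}xJ(R)$ of small modules, and pushing $D\ll E(D)$ into $Q$ via an extension of the summation map --- is precisely a self-contained proof of the direction of Rayar's lemma the paper needs, and your $(5)\Rightarrow(1)$ argument ($M_i\subseteq\Rad(E(M_i))$, then $\Rad(\bigoplus E(M_i))\ll\bigoplus E(M_i)$ by right perfectness) recovers the converse direction in the only case required. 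The decisive step $(4)\Rightarrow(5)$ is identical in both proofs: an injective module is max-projective with small radical, hence projective over the semiperfect ring by Proposition~\ref{maxprojectiveradsmall}(1); and $(5)\Rightarrow(3)$ is likewise the same (right Artinian, hence right perfect, hence all radicals small). The only cosmetic quibbles: what you call ``Nakayama'' in $(2)\Rightarrow(4)$ is really the combination of $J(R)\ll R_R$ (true over any ring) with the fact that epimorphic images of small submodules are small; semiperfectness enters only through $\Rad(Q)=QJ(R)$, as you note. Your route buys independence from the external citation at the cost of a page of elementary modular-law arguments; the paper's route is shorter but leaves the reader to unpack Rayar's statement and to supply the upgrade from max-$QF$ to almost-$QF$ in its $(2)\Rightarrow(3)$ step, which your cycle avoids entirely.
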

\begin{proof}
$(1)\Rightarrow (2)$ and $(3)\Rightarrow (4)$ Clear. $(2)\Rightarrow(3)$ and $(3)\Rightarrow(1)$ By \cite[Lemma 9]{MR653852}.

$(4)\Rightarrow (5)$ Let $M$ be an injective right $R$-module. Since $M$ is max-projective with $\Rad(M)\ll M$, by Proposition \ref{maxprojectiveradsmall}$(1)$, $M$ is projective. Hence $R$ is $QF$.

$(5)\Rightarrow (3)$ Let $M$ be an injective right $R$-module. By the hypothesis, $M$ is projective. Since $R$ is right Artinian, every right $R$-module has a small radical, whence $\Rad(M)\ll M$.
\end{proof}

In \cite{coneat}, a submodule $N$ of a right $R$-module $M$ is called \emph{coneat} in $M$ if $\Hom(M,S)\rightarrow \Hom(N,S)$ is epic for every simple right $R$-module $S$. In \cite{spure}, $N$ is called \emph{s-pure} in $M$ if $N\otimes S\rightarrow M\otimes S$ is monic for every simple left $R$-module $S$. M is \emph{absolutely coneat} (resp., \emph{absolutely s-pure}) if $M$ is coneat (resp., s-pure) in every extension of it. If $R$ is commutative, then s-pure short exact sequences coincide with coneat short exact sequences, \cite[Proposition 3.1]{FuchsNeat}.
\begin{proposition}
Consider the following conditions for a ring R:
\begin{enumerate}
\item $R$ is right max-$QF$.
\item Every absolutely coneat right $R$-module is max-projective.
\item Every absolutely s-pure right $R$-module is max-projective.
\item Every absolutely pure right $R$-module is max-projective.
\end{enumerate}
Then $(3)\Rightarrow (4)\Rightarrow (1)\Rightarrow (2)$. Also, if $R$ is a commutative ring, then $(2)\Rightarrow (3)$.
\end{proposition}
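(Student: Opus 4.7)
The plan is to verify the four implications one at a time, relying on the pair of facts that injective modules are absolutely pure and that purity is stronger than $s$-purity.

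For $(3)\Rightarrow(4)$, I would first observe that every pure submodule is automatically $s$-pure, because $s$-purity is just the special case of purity where the tensor-testing left module is required to be simple. Consequently, an absolutely pure right $R$-module is absolutely $s$-pure, so the hypothesis of (3) applies to it and it is max-projective.

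For $(4)\Rightarrow(1)$, I would simply recall that every injective right $R$-module is absolutely pure (it is pure, indeed a direct summand, in every extension). Thus (4) forces every injective module to be max-projective, which is exactly the definition of right max-$QF$.

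The substantive implication is $(1)\Rightarrow(2)$, and this is where I expect the only real work. Let $M$ be absolutely coneat, take its injective hull $i: M\hookrightarrow E(M)$, and let $S$ be a simple right $R$-module with an arbitrary homomorphism $g: M\to S$. By absolute coneatness, $M$ is coneat in $E(M)$, so the induced map $\Hom(E(M),S)\to\Hom(M,S)$ is surjective; pick $\tilde g: E(M)\to S$ with $\tilde g\,i=g$. Since $R$ is right max-$QF$, the injective module $E(M)$ is max-projective, so using Lemma \ref{Rprojmaxproj} (applied to the epimorphism $\pi:R\to S$) we may lift $\tilde g$ to some $\tilde h: E(M)\to R$ with $\pi\tilde h=\tilde g$. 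Then $h:=\tilde h\,i: M\to R$ satisfies $\pi h=\tilde g\,i=g$, so by Lemma \ref{Rprojmaxproj} again $M$ is max-projective. The main thing to be careful about here is simply that coneatness is tested on simple modules, which is exactly what max-projectivity needs, so the injective hull serves as the bridge.

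Finally, for $(2)\Rightarrow(3)$ in the commutative case, I would invoke \cite[Proposition 3.1]{FuchsNeat}, which says that over a commutative ring, $s$-pure short exact sequences are the same as coneat short exact sequences. Therefore in this setting an $R$-module is absolutely $s$-pure if and only if it is absolutely coneat, and (2) immediately delivers (3). This step is a direct appeal to the cited equivalence and needs no further argument.
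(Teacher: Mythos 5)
Your proposal is correct and follows essentially the same route as the paper: the implications $(3)\Rightarrow(4)\Rightarrow(1)$ via ``pure implies $s$-pure'' and ``injective implies absolutely pure,'' the lift of a map $M\to S$ through the injective hull $E(M)$ using coneatness and then max-projectivity of $E(M)$ for $(1)\Rightarrow(2)$, and the appeal to the Fuchs equivalence of $s$-pure and coneat sequences over commutative rings for $(2)\Rightarrow(3)$. The only difference is that you spell out the steps the paper dismisses as ``clear,'' which is harmless.
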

\begin{proof}
$(3)\Rightarrow (4)\Rightarrow (1)$ Clear.

$(1)\Rightarrow (2)$ Let $M$ be an absolutely coneat right $R$-module. Consider the following diagram:
$$\xymatrix{0 \ar[r]  &M \ar[d]^{f} \ar[r]^{i}&E(M)\\
R\ar[r]^{\pi} & S \ar[r] & 0
} $$ where $S$ is a simple right $R$-module, $i:M\rightarrow E(M)$ is the inclusion map and $\pi:R\rightarrow S$ is the canonical quotient map. Since $M$ coneat in $E(M)$, there is a homomorphism $g:E(M)\rightarrow S$ such that $gi=f$. Also, by $(1)$, there exists a homomorphism $h:E(M)\rightarrow R$ such that $\pi h=g$. Hence, $(\pi h)i=gi=f$.

$(2)\Rightarrow (3)$ Let $M$ be an absolutely s-pure right $R$-module. Then $M$ is s-pure in $E(M)$. Since $R$ is commutative, $M$ is coneat in $E(M)$. Hence, $M$ is max-projective by $(2)$.
\end{proof}

In \cite[Lemma 1.16]{Nicholson}, it was shown that for a projective module $M$, if $M = P + K$, where $P$ is a summand of $M$ and $K \subseteq M$, then there exists a submodule $Q \subseteq K$ with $M = P \oplus Q$. By using the same method in the proof of \cite[Theorem 2.8]{almostperfect}, one can prove the following result.
\begin{proposition}
A ring $R$ is right almost-$QF$ if and only if for every injective right $R$-module $E$, if $E=P+L$, where $P$ is a finitely generated projective summand of $E$ and $L\subseteq E$, then $E=P\oplus K$ for some $K\subseteq L$.
\end{proposition}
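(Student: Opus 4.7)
The plan is to prove the two implications separately. The forward direction is a Nicholson-style splitting argument (in the spirit of \cite[Lemma 1.16]{Nicholson}) fuelled by $R$-projectivity of injective modules. The reverse direction will follow the strategy of \cite[Theorem 2.8]{almostperfect}: from a given homomorphism $f\colon E\to R/I$ I will build a module decomposable as $P+L$ with $P$ a finitely generated projective summand, apply the hypothesis, and extract the desired lift from the resulting splitting.

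For the forward direction, assume $R$ is right almost-$QF$ and let $E$ be injective with $E = P + L$, where $P$ is a finitely generated projective summand, so that $E = P\oplus P'$ for some $P'$. By the assumption, $E$ (and therefore $P'$, by Lemma \ref{directsumofmaxprojectives}(1)) is $R$-projective. The projection $q\colon E\to P'$ along $P$ restricts to an epimorphism $q|_L\colon L\twoheadrightarrow P'$ because $E = P+L$; producing a section $s\colon P'\to L$ of $q|_L$ will suffice, as then $K := s(P')\subseteq L$ complements $P$ in $E$. To build $s$, I would write $R^n = P\oplus C$ with the projection $\pi_P\colon R^n\twoheadrightarrow P$ (available since $P$ is finitely generated projective) and let $\rho\colon P\twoheadrightarrow E/L\cong P/(P\cap L)$ be the canonical surjection. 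Since relative projectivity is preserved under finite direct sums, the $R$-projectivity of $P'$ upgrades to $R^n$-projectivity; hence the map $-\bar\iota\colon P'\to E/L$ (where $\bar\iota$ denotes the composite $P'\hookrightarrow E\twoheadrightarrow E/L$) lifts through $\rho\pi_P$ to some $\phi\colon P'\to R^n$. Setting $\delta := \pi_P\phi\colon P'\to P$, the map $s(p') := p' + \delta(p')$ has image in $L$ (since $\bar\iota(p') + \rho\delta(p') = 0$ in $E/L$) and satisfies $q\circ s = \mathrm{id}_{P'}$.

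For the reverse direction, assume the decomposition property and consider an injective $E$ with a homomorphism $f\colon E\to R/I$ for some right ideal $I$. Following \cite[Theorem 2.8]{almostperfect}, I would form $F = E\oplus R$, take $P := 0\oplus R$ (a finitely generated projective direct summand of $F$) and set $L := \{(e,r)\in F : f(e) = \pi(r)\}$. A routine check gives $F = P+L$. If the hypothesis could be applied to $F$, it would yield $F = P\oplus K$ for some $K\subseteq L$; the first-coordinate projection $K\to E$ would then be an isomorphism whose inverse has the form $e\mapsto(e,h(e))$ for a homomorphism $h\colon E\to R$, and membership in $L$ forces $\pi h = f$, exhibiting $E$ as $R$-projective.

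The main obstacle is that $F = E\oplus R$ need not be injective, so the hypothesis does not apply to $F$ directly. My plan for remedy is to pass to the injective hull $\widetilde F = E\oplus E(R)$, extend $f$ to $\widetilde f\colon E\to E(R)/I$ through the canonical inclusion $R/I\hookrightarrow E(R)/I$, and form $\widetilde L := \{(e,x)\in\widetilde F : \widetilde f(e) = \widetilde\pi(x)\}$ inside this injective module, which gives $\widetilde F = (0\oplus E(R)) + \widetilde L$. The delicate step, which I expect to be the principal technical point of the argument, is to identify within $\widetilde F$ a finitely generated projective summand for which the hypothesis produces a splitting whose restriction back to $F$ furnishes a complement of $0\oplus R$ lying inside the original $L$, and hence the lift $h$.
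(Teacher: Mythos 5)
Your forward implication is correct and is exactly the argument the paper intends: a Nicholson-style splitting in the spirit of \cite[Lemma 1.16]{Nicholson}, fuelled by the $R$-projectivity of the complement $P'$ (via Lemma \ref{directsumofmaxprojectives}), which is upgraded to $R^n$-projectivity using the finite generation of $P$ and then used to correct the inclusion $P'\hookrightarrow E$ modulo $L$ by the map $\delta$. The verification that $K=\{p'+\delta(p'): p'\in P'\}$ lies in $L$ and complements $P$ is complete, and you use the finite generation of $P$ precisely where it is needed.

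The reverse implication is not proved, and the gap you flag is genuine rather than a routine loose end. The method of \cite[Theorem 2.8]{almostperfect} works for flat modules because $M\oplus R$ is again flat; here $E\oplus R$ is not injective, and your proposed repair of passing to $E\oplus E(R)$ destroys the one feature that made the construction usable: $0\oplus R$ is a direct summand of $E\oplus R$ but not of $E\oplus E(R)$ unless $R$ is right self-injective, while $0\oplus E(R)$ is not finitely generated projective. Worse, the obstruction is structural, not technical. A finitely generated projective direct summand of an injective module is itself injective, so the decomposition hypothesis only ever speaks about finitely generated projective \emph{injective} summands; over a ring admitting no nonzero such module the condition holds vacuously. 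For instance, $R=k[x,y]/(x^2,xy,y^2)$ is a commutative local Artinian ring that is not self-injective, so its only finitely generated projective injective module is $0$ and the decomposition condition is vacuously true; yet $R$ is neither $QF$ nor small (its radical is nilpotent, so $\Rad(E(R))\neq E(R)$ by Lemma \ref{small}), hence not almost-$QF$ by the paper's theorem on commutative Noetherian rings. So no argument of the shape you sketch can close the converse when ``summand'' is read as ``direct summand'', and the forward direction forces that reading (with $P$ merely a finitely generated projective submodule, already $\Z\subseteq\Q$ would be a counterexample to the forward direction). At a minimum you should record only the implication you actually proved; the converse needs either a reformulation of the decomposition condition or additional hypotheses on $R$.
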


Let R be a ring and $\Omega$ be a class of $R$-modules which is closed under isomorphic copies. Following Enochs, a homomorphism $\varphi:G\rightarrow M$ with
$G\in \Omega$ is called an $\Omega$-precover of the $R$-module $M$ if for each homomorphism $\psi:H\rightarrow M$ with $H\in \Omega$, there exists $\lambda:H\rightarrow G$ such that $\varphi\lambda=\psi$.
\begin{lemma}
Let $R$ be a right self-injective ring. Then the following are equivalent.
\begin{enumerate}
\item $R$ is right almost-$QF$.
\item Every finitely generated right $R$-module has an injective precover which is $R$-projective.
\item Every cyclic right $R$-module has an injective precover which is $R$-projective.
\end{enumerate}
\end{lemma}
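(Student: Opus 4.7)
The strategy is to prove the cycle of implications $(1) \Rightarrow (2) \Rightarrow (3) \Rightarrow (1)$.

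For $(1) \Rightarrow (2)$, given a finitely generated right $R$-module $M$, I pick an epimorphism $\varphi : R^n \to M$ from a finitely generated free module. The right self-injectivity of $R$ guarantees that $R^n$ is injective as a finite direct sum of injectives, while $R^n$ is $R$-projective for the obvious reason. To verify that $\varphi$ is an injective precover of $M$, let $H$ be any injective right $R$-module and $\psi : H \to M$ a homomorphism. By (1), $H$ is $R$-projective; since $R^n$ is a finite direct sum of copies of $R$, the standard fact that relative projectivity with respect to a finite direct sum $\bigoplus N_i$ coincides with relative projectivity with respect to each $N_i$ yields that $H$ is $R^n$-projective. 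As $M$ is an image of $R^n$, the map $\psi$ therefore lifts to some $\lambda : H \to R^n$ with $\varphi \lambda = \psi$, giving the required precover. The implication $(2) \Rightarrow (3)$ is immediate since cyclic modules are finitely generated.

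For $(3) \Rightarrow (1)$, let $E$ be any injective right $R$-module; I show that $E$ is $R$-projective. Take a right ideal $I$ of $R$ and a homomorphism $f : E \to R/I$, and let $\pi : R \to R/I$ denote the canonical projection. By (3) applied to the cyclic module $R/I$, there is an injective precover $\varphi : G \to R/I$ with $G$ both injective and $R$-projective. Because $E$ is injective, the precover property applied to $f$ produces $\lambda : E \to G$ with $\varphi \lambda = f$. Since $G$ is $R$-projective and $R/I$ is an image of $R$, the map $\varphi$ itself lifts through $\pi$ to some $\tilde{\varphi} : G \to R$, and then $\tilde{\varphi} \lambda : E \to R$ satisfies $\pi \tilde{\varphi} \lambda = \varphi \lambda = f$. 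Hence $E$ is $R$-projective.

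The only point requiring care is the precover verification in $(1) \Rightarrow (2)$, where $R$-projectivity of an injective $H$ must be promoted to $R^n$-projectivity so that $\psi$ can be lifted through $\varphi$; this rests on the standard behavior of relative projectivity under finite direct sums (a short inductive diagram chase using Lemma \ref{directsumofmaxprojectives}(2) together with its converse for split extensions). Everything else is a direct application of the definitions of $R$-projectivity and of an injective precover, combined with the hypothesis that $R$ is right self-injective.
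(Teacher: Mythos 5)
Your proposal is correct and follows essentially the same route as the paper's proof: the epimorphism $R^n\to M$ serves as the injective, $R$-projective precover in $(1)\Rightarrow(2)$, and in $(3)\Rightarrow(1)$ you factor $f:E\to R/I$ through the precover $G$ and then lift through $\pi:R\to R/I$ using the $R$-projectivity of $G$. Your explicit note that $R$-projectivity of an injective module must be promoted to $R^n$-projectivity (via the standard finite direct sum behavior of relative projectivity) is a detail the paper leaves implicit, but it is the same argument.
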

\begin{proof}
$(1)\Rightarrow (2)$ Let $M$ be a finitely generated right $R$-module and $g:R^{n}\rightarrow M$ be an epimorphism. For any homomorphism $f:E\rightarrow M$ with $E$ is injective, there exists $h:E\rightarrow R^{n}$ such that $gh=f$. Since $R^{n}$ is injective, $g$ is an injective precover of $M$.

$(2)\Rightarrow (3)$ Clear.

$(3)\Rightarrow (1)$ Let $E$ be an injective right $R$-module and $I$ be a right ideal of $R$. Suppose that $f:E\rightarrow R/I$ is a homomorphism, $\pi:R\rightarrow R/I$ is the natural epimorphism and $g:G\rightarrow R/I$ be an injective cover of $R/I$. So, there is $h:E\rightarrow G$ such that $gh=f$. By hypothesis, $G$ is $R$-projective and hence there is $k:G\rightarrow R$ such that $\pi k=g$. Let $\overline{f}=kh$. So $\pi\overline{f}=\pi kh=gh=f$. Therefore, E is $R$-projective, and so $R$ is right almost-$QF$.
\end{proof}
In \cite{copureinjective}, a module $M$ is said to be \emph{copure-injective} if $\Ext^{1}_{R}(E,M)=0$ for any injective module $E$. Now we give the characterization of almost-$QF$ rings in terms of copure-injective modules.

\begin{proposition}
Let $R$ be a ring. Then the followings are equivalent.
\begin{enumerate}
\item $R$ is right almost-$QF$ and $R_{R}$ is copure-injective.
\item Every right ideal of $R$ is copure-injective.
\item Every submodule of a finitely generated projective right $R$-module is copure injective.
\end{enumerate}
\end{proposition}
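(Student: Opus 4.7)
The plan is to establish the cycle $(1)\Rightarrow(2)\Rightarrow(3)\Rightarrow(1)$ by repeatedly applying the functor $\Hom(E,-)$, with $E$ injective, to carefully chosen short exact sequences, together with the observation that every finitely generated projective module is a direct summand of some $R^n$.

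For $(1)\Rightarrow(2)$, given a right ideal $I$ and an injective module $E$, I would apply $\Hom(E,-)$ to the sequence $0\to I\to R\to R/I\to 0$. The resulting fragment
\[
\Hom(E,R)\to\Hom(E,R/I)\to\Ext^1(E,I)\to\Ext^1(E,R)
\]
has first map surjective (because $E$ is $R$-projective by the almost-$QF$ hypothesis) and last term zero (by copure-injectivity of $R_R$), so $\Ext^1(E,I)=0$, proving $I$ is copure-injective. The reverse implication $(3)\Rightarrow(1)$ is essentially dual: specializing (3) to $P=R$ yields that $R_R$ and every right ideal of $R$ are copure-injective, so the same long exact sequence now has both $\Ext^1(E,I)=0$ and $\Ext^1(E,R)=0$, forcing $\Hom(E,R)\to\Hom(E,R/I)$ to be onto for every right ideal $I$; this is precisely the statement that every injective $E$ is $R$-projective.

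The slightly more delicate implication is $(2)\Rightarrow(3)$. Since any finitely generated projective module is a direct summand of some $R^n$, any submodule of such a module is in particular a submodule of $R^n$, so it suffices to prove by induction on $n$ that every submodule $N\subseteq R^n$ is copure-injective. The case $n=1$ is exactly (2). For the inductive step, project to the first coordinate $\pi:R^n\to R$, which yields the short exact sequence
\[
0\to N\cap R^{n-1}\to N\to \pi(N)\to 0,
\]
where $N\cap R^{n-1}$ sits inside $R^{n-1}$ (copure-injective by the inductive hypothesis) and $\pi(N)$ is a right ideal of $R$ (copure-injective by (2)). A routine application of $\Hom(E,-)$ to this short exact sequence shows that the class of copure-injective modules is closed under extensions, and therefore $N$ is copure-injective. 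The main technical point, really the only non-automatic move, is recognizing that one must run an induction on the rank of an ambient free module rather than attempting to argue directly about submodules of a general projective; once the right short exact sequence is in place, the proof reduces to chasing the long exact $\Ext$-sequence.
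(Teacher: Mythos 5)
Your proposal is correct and follows essentially the same route as the paper: the same long exact sequence argument for $(1)\Rightarrow(2)$, the same induction on $n$ with the short exact sequence $0\to N\cap R^{n-1}\to N\to N/(N\cap R^{n-1})\to 0$ (whose quotient is identified with a right ideal) for $(2)\Rightarrow(3)$, and the same $\Ext$-vanishing argument to recover $R$-projectivity of injectives in closing the cycle. The only cosmetic difference is that the paper closes the implications as $(3)\Rightarrow(2)\Rightarrow(1)$ rather than $(3)\Rightarrow(1)$ directly.
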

\begin{proof}
$(1)\Rightarrow (2)$ Let $E$ be an injective right $R$-module and $I$ be a right ideal of $R$. By applying $\Hom(E,-)$ to the short exact sequence $0\rightarrow I\rightarrow R\rightarrow R/I\rightarrow 0$, we obtain the following exact sequence: $0\rightarrow \Hom(E,I)\rightarrow \Hom(E,R)\rightarrow \Hom(E,R/I)\rightarrow \Ext^{1}_{R}(E,I)\rightarrow \Ext^{1}_{R}(E,R)\rightarrow ...$. Since $R_{R}$ is copure-injective, $\Ext^{1}_{R}(E,R)=0$. Then the map $\Hom(E,R)\rightarrow \Hom(E,R/I)$ is onto since $E$ is $R$-projective. Hence, $\Ext^{1}_{R}(E,I)=0$ for any injective $R$-module $E$.

$(2)\Rightarrow (3)$ Suppose that every right ideal of $R$ is copure-injective. First, by induction, we show that every submodule of $R^{n}$ is copure-injective. The case $n=1$ follows by the hypothesis. Now suppose that $n> 1$ and every submodule of $R^{n-1}$ is copure-injective. Let $N$ be a submodule of $R^{n}$, and consider the exact sequence $0\rightarrow N\cap R^{n-1}\rightarrow N\rightarrow N/(N\cap R^{n-1})\rightarrow 0$. By induction hypothesis, $N\cap R^{n-1}$ is copure-injective, and $N/(N\cap R^{n-1})\cong (N + R^{n-1})/R^{n-1}\subseteq R^{n}/R^{n-1}\cong R$ is also copure-injective. Therefore, for any injective right $R$-module $E$, consider the exact sequence $ \Ext^{1}_{R}(E, N\cap R^{n-1})\rightarrow \Ext^{1}_{R}(E,N)\rightarrow \Ext^{1}_{R}(E,N/(N\cap R^{n-1}))$. Since $\Ext^{1}_{R}(E, N\cap R^{n-1})=\Ext^{1}_{R}(E,N/(N\cap R^{n-1}))=0$, we have $\Ext^{1}_{R}(E,N)=0$. Therefore, $N$ is copure-injective. Now if $M$ is a submodule of a finitely generated projective right $R$-module $P$, then there is $n\geq 1$ such that $M\subseteq P\subseteq R^{n}$. By the above observation, $M$ is also copure-injective.
$(3)\Rightarrow (2)$ is clear. $(2)\Rightarrow (1)$ by Proposition \ref{p-testing}.
\end{proof}
\begin{proposition}
Let $R$ be a ring. Then the following are equivalent.
\begin{enumerate}
\item $R$ is semisimple.
\item $R$ is right almost-$QF$ right V-ring.
\item $R$ is right almost-$QF$ and every submodule of an $R$-projective right module is $R$-projective.
\item $R$ is right self-injective and every submodule of an $R$-projective right module is $R$-projective.
\end{enumerate}
\end{proposition}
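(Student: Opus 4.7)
The four conditions will be shown equivalent by proving $(1)\Rightarrow(2),(3),(4)$ trivially and then each of $(2),(3),(4)\Rightarrow(1)$. The trivial direction holds because over a semisimple ring every module is both projective and injective, which immediately implies almost-$QF$, the V-ring property, self-injectivity, and every submodule relative-projectivity condition.

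For $(2)\Rightarrow(1)$, the V-ring property makes every simple right $R$-module $S$ injective, so almost-$QF$ forces $S$ to be $R$-projective, hence max-projective, and Example~\ref{example}$(c)$ promotes $S$ to projective. The V-ring hypothesis also guarantees $\Rad(M)=0$ for every $R$-module $M$. If $R\neq\Soc(R)$, then $R/\Soc(R)$ is nonzero with zero radical, hence has a maximal submodule, which lifts to a maximal right ideal $M\supseteq\Soc(R)$ whose quotient $R/M$ is simple and therefore projective. The splitting $R=M\oplus S'$ places the simple summand $S'\cong R/M$ inside $\Soc(R)\subseteq M$, contradicting $S'\cap M=0$. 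Hence $R=\Soc(R)$ is semisimple.

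For $(3)\Rightarrow(1)$, any right $R$-module $M$ embeds in $E(M)$, which is $R$-projective by almost-$QF$, so the submodule hypothesis makes $M$ itself $R$-projective. In particular every cyclic $R$-module is $R$-projective, and the cited \cite[Lemma 2.1]{testingforprojectivity} then promotes every cyclic module to projective. Every right ideal is consequently a direct summand of $R$, so $R$ is semisimple.

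For $(4)\Rightarrow(1)$, I would reduce to $(3)$ by showing that self-injectivity together with the submodule hypothesis already forces almost-$QF$. Given an injective right module $E$ and a free presentation $0\to K\to F\to E\to 0$, the kernel $K$ is a submodule of the $R$-projective $F$, hence $R$-projective by hypothesis. Self-injectivity provides $\Ext^{1}(-,R)=0$ universally, and combined with the $R$-projectivity of $K$ this yields $\Ext^{1}(K,J)=0$ for every right ideal $J$. The main obstacle is to convert this vanishing into the required surjectivity $\Hom(E,R)\to\Hom(E,R/I)$ for each right ideal $I$: given $f:E\to R/I$, lift $f\circ(F\to E)$ along $R\to R/I$ to some $g:F\to R$ using projectivity of $F$, and then look for an auxiliary $g':F\to I$ with $g'|_{K}=g|_{K}$, so that $g-g'$ descends to the desired lift $E\to R$. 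Producing such a $g'$ amounts to extending the restricted map $K\to I$ to a map $F\to I$, and this is the crux of the argument, where self-injectivity of $R$ and $R$-projectivity of the syzygy $K$ must be carefully traded against one another. Once almost-$QF$ is in hand, the conclusion follows from the step $(3)\Rightarrow(1)$ already established.
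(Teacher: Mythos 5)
Your handling of $(1)\Rightarrow(2),(3),(4)$, of $(2)\Rightarrow(1)$ and of $(3)\Rightarrow(1)$ is correct and essentially the paper's route; you even supply details the paper leaves implicit (the socle argument showing that projectivity of all simples forces $R=\Soc(R)$, and the passage from ``$R$-projective cyclic'' to ``projective''). The genuine gap is in $(4)\Rightarrow(1)$. Your plan is to upgrade self-injectivity to almost-$QF$ and then quote $(3)\Rightarrow(1)$, but the decisive step---extending the restriction $g|_{K}:K\to I$ to a homomorphism $F\to I$, equivalently the surjectivity of $\Hom(F,I)\to\Hom(K,I)$ needed to get $\Ext^{1}_{R}(E,I)=0$---is exactly the point you leave open. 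Self-injectivity of $R$ only extends maps \emph{into $R$}; it gives no control over whether the extension lands inside the ideal $I$, and I do not see how to close this without already knowing $R$ is semisimple. As written, the implication is not established.

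The paper's argument for $(4)\Rightarrow(1)$ bypasses almost-$QF$ entirely and is much shorter: apply the submodule hypothesis to a right ideal $I$ itself, viewed inside the ($R$-projective) module $R_{R}$. Then $I$ is $R$-projective, so any homomorphism $f:I\to M$ into a cyclic module $M$ lifts along the quotient $\pi:R\to M$ to some $h:I\to R$; self-injectivity of $R$ extends $h$ to $\lambda:R\to R$, and $\pi\lambda$ then extends $f$ to all of $R$. Thus every cyclic right $R$-module satisfies Baer's criterion, hence is injective, and this forces $R$ to be semisimple (the paper leaves this last step implicit; it is the classical theorem of Osofsky that a ring all of whose cyclic right modules are injective is semisimple). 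The moral difference is that the hypothesis in $(3)$ and $(4)$ is most usefully applied to submodules of $R$ itself rather than to syzygies of injectives; once you test it on right ideals, the self-injectivity slots in naturally and no $\Ext$ bookkeeping is needed.
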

\begin{proof}
$(1)\Rightarrow (2)$, $(1)\Rightarrow (3)$ and $(1)\Rightarrow (4)$ are clear.

$(2)\Rightarrow (1)$ Since $R$ is a right $V$-ring, every simple right $R$-module is injective. By the hypothesis, every simple right $R$-module is $R$-projective, whence projective.

$(4)\Rightarrow (1)$ Let $M$ be a cyclic right $R$-module and $I$ a right ideal of $R$. Consider the following diagram:
 $$\xymatrix{0 \ar[r]  &I \ar[d]^{f} \ar[r]^{i}&R\\
R\ar[r]^{\pi} & M \ar[r] & 0
} $$ where $i:I\rightarrow R$ is the inclusion map and $\pi:R\rightarrow M$ is the canonical quotient map. Since $I$ is $R$-projective there exists $h:I\rightarrow R$ such that $\pi h=f$. By the injectivity of $R$, there exists $\lambda:R\rightarrow R$ such that $\lambda i=h$. Then $(\pi\lambda)i=\pi h=f$, and $\pi\lambda:R\rightarrow M$ is the required map.

$(3)\Rightarrow (1)$ Since every simple right $R$-module can be embedded in an injective $R$-module, every simple right $R$-module is $R$-projective, and so every simple right $R$-module is projective. Hence, $R$ is semisimple.
\end{proof}

\end{document}